\newcounter{levelcount} 
\newcommand\countnodes[2]{
    \draw let \p{L} = (#1) in (-#2,\y{L}) node {\pgfmathparse{int(\value{levelcount}-1)}\pgfmathresult};
    \stepcounter{levelcount} 
}
\newenvironment{customthm}[1]
  {\innercustomthm}
  {\endinnercustomthm}
\newtheorem*{remark 1}{Remark}
\newtheorem{lemma}{Lemma}
\newtheorem{theorem}{Theorem}
\newtheorem{proposition}{Proposition}
\newtheorem{remark}{Remark}
\newcommand{\norm}[1]{\left\|#1\right\|}
\newcommand{\abs}[1]{\left|#1\right|}
\numberwithin{equation}{section}
\newcommand{\RR}{\mathbb{R}}
\newcommand{\p}{\partial}
\newcommand{\CA}{\mathcal{A}}
\newcommand{\CB}{\mathcal{B}}
\newcommand{\R}{\mathbb{R}}
\newcommand{\CG}{\mathcal{G}}
\newcommand{\Z}{\mathbb{Z}}
\renewcommand\>{\rangle}
\newcommand\nc{\newcommand}
\nc\hd{\widehat{D}}
\nc\kp{\kappa}
\DeclareMathSymbol{\Gamma}{\mathalpha}{letters}{"00}
\DeclareMathSymbol{\Theta}{\mathalpha}{letters}{"02}
\DeclareMathSymbol{\Lambda}{\mathalpha}{letters}{"03}
\DeclareMathSymbol{\Omega}{\mathalpha}{letters}{"0A}
\definecolor{cr}{rgb}{1,0,0}
\title{Cancellations of Resonances and Long Time Dynamics of Cubic Schr\"odinger Equation on $\mathbb{T}$}
\author{Kexin Jin}
\address{Princeton University}
\email{kexinj@math.princeton.edu}
\author{Xiao Ma}
\address{Princeton University}
\email{xiaom@math.princeton.edu}
\begin{document}

\begin{abstract}
We prove a vanishing property of the normal form transformation of the 1D cubic nonlinear Schr\"odinger (NLS) equation with periodic boundary conditions on $[0,L]$. We apply this property to quintic resonance interactions and obtain a description of dynamics for time up to $T=\frac{L^2}{\epsilon^4}$, if $L$ is sufficiently large and size of initial data $\epsilon$ is small enough. Since $T$ is the characteristic time of wave turbulence, this result implies the absence of wave turbulence behavior of 1D cubic NLS. Our approach can be adapted to other integrable systems without too many difficulties. In the proof, we develop a correspondence between Feynman diagrams and terms in normal forms, which allows us to calculate the coefficients inductively. 
\end{abstract}

\maketitle

\setcounter{tocdepth}{1}

\tableofcontents

\section{Introduction}\label{Sec:Intro}

We consider the nonlinear Schr\"odinger (NLS) equation with cubic nonlinearity on $[0,L]$ with periodic boundary conditions, that is 
\begin{equation}\label{1NLS0}
\left\{ \begin{array}{l}
- i \partial_t v + \frac{1}{2\pi} \partial_{xx} v = \sigma |v|^{2} v, \qquad   x \in  \mathbb{T}_L=[0,L] , \\
v(t=0) =\epsilon v_0,
\end{array} \right.
\end{equation}
where $\sigma=\pm1$ denoting the focusing case or the defocusing case. This equation was discovered by physicists as a fundamental equation in nonlinear optics and Bose-Einstein condensation. One of the very first mathematical results for this equation was done by Zakharov-Shabat \cite{ZS} by inverse scattering method on $\mathbb{R}$. 
Inverse scattering on $\mathbb{R}$ is a powerful method that reduces this equation to linear equations. It gives explicit formulas for all soliton solutions and completely characterizes the asymptotic behavior of the spatial localized solution on $\mathbb{R}$, that is the soliton resolution conjecture. Although later development in nonlinear PDEs provided many other methods that could partially recover the soliton resolution conjecture for nonintegrable system, no method could recover the full conjecture. So it turns out inverse scattering method is the most suitable tool in the study of integrable system on $\mathbb{R}$. 

But inverse scattering method is not so powerful in the case of periodic boundary condition. The counterpart of inverse scattering method on $\mathbb{T}$ involves very difficult spectrum theory, like the Bloch spectrum and very restrictive assumption on the initial data, like finite gap property. For more details of inverse scattering method on $\mathbb{T}$, see \cite{NMPZ1984}. Here pure PDE method seems to be more powerful. For example in \cite{Bourgain1993}, Bourgain used number theory method and restricted norm method to derive the Strichartz estimates and local well-posedness in $L^2(\mathbb{T})$ (Local well-posedness in $H^{s}(\mathbb{T})$ with $s>\frac{1}{2}$ is obvious and global well-posedness follows from energy conservation laws in the defocusing case). For more refined local well-posedness results see for example \cite{Christ}, \cite{ET} and \cite{KST}. 

After well-posedness is established, the next step is to describe the long time dynamics of these solutions. For example Bourgain proposed many promising approaches. In \cite{Bourgain1994}, he constructed an invariant measure for 1D NLS, which is the preliminary of any ergodic theory for this equation. In \cite{Bourgain1994'}, he applied symplectic nonsqueezing to exclude asymptotic stability of NLS on $\mathbb{T}$. He also initiated the construction of quasi-periodic in time solutions. See for example \cite{ProcesiProcesi} for a recent result of quasi-periodic solutions.

In this paper, we shall also describe the long dynamics on $\mathbb{T}_L$, torus of length $L$. The large box long time dynamics of dispersive PDEs has been studied by many authors, see for example \cite{BGHS1}, \cite{BGHS2}, \cite{FGH}, \cite{fange1}, and \cite{fange2}. In what follows, it's more convenient to consider the new variable $u$, $v=\epsilon u$. And we shall always take $\sigma=1$, which makes no difference from taking $\sigma=-1$. Thus $u$ satisfies
\begin{equation}\label{1NLS}
\left\{ \begin{array}{l}
- i \partial_t u + \frac{1}{2\pi} \Delta u = \epsilon^2|u|^{2} u, \qquad   x \in  \mathbb{T}_L , \\
u(t=0) = u_0.
\end{array} \right.
\end{equation} 

For $u$, we have the following result.

\begin{theorem}\label{th:main}
Fix $\ell> 1$, $0<\gamma\ll1$, and $M>0$. Let $f_{0}\in X^{\ell,2}$ and $B=\norm{f_{0}}_{X^{\ell,2}(\mathbb{R})}$. Let $u$ be a solution of NLS (\ref{1NLS}) with initial data $u_{0}(x)=\frac{1}{L}\sum_{\mathbb{Z}_{L}}f_{0}(K)e(Kx)$ and Fourier coefficients $u_K(t)$. Then for $L$ sufficiently large and $\epsilon^2L^{\gamma}$ sufficiently small, depending on $M$, $B$, there exists a constant $C_{\gamma}$, such that for any $t\in[0,MT_{R}]$, $T_{R}=\frac{L^2}{\epsilon^4}$,  
\begin{equation}
\begin{split}
    &\norm{u_{K}(t)-f_0(K)e\left(\Big(K^2+\frac{\epsilon^2}{\pi L}\norm{u}_{L^2(\mathbb{T})}^2+\frac{\epsilon^2}{\pi L^2}|f_0(K)|^4\Big)t-\frac{\epsilon^2}{2\pi L^2}\int^t_{0}Q_{K}(f)(s) ds\right)}_{X^{\ell}(\mathbb{Z}_{L})}
    \\
    &\lesssim C_{\gamma}(\epsilon^2 L^\gamma+\frac{1}{L^{1-\gamma}}).
\end{split}
\end{equation}
where $Q_{K}(f)$ is a complicated but explicit function of $f(K)$ defined in Proposition \ref{prop2}, $f(K)$ is defined by
$$f(K)=e(\frac{1}{2\pi}|f_0(K)|^4t)f_0(K),$$
$\|f\|_{X^{\ell,N}(\mathbb{R})}=\sum_{0\leq |\alpha|\leq N} \|\langle x \rangle^\ell\nabla^\alpha f(x)\|_{L^\infty(\mathbb{R})}$ and  $\|u\|_{X^{\ell}(\mathbb{T}_L)}=\sup_{K\in\mathbb{Z}_L}\| \langle K \rangle^\ell u_K \|$ where $u_K$ is the Fourier coefficients of $u$.
\end{theorem}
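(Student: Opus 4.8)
The plan is to pass to the interaction representation in Fourier variables and to strip off, order by order, the resonant interactions that actually displace the profile on the time scale $T_R=L^2/\epsilon^4$. Setting $a_K(t)=e(-K^2t)\,u_K(t)$ turns the equation into a slowly varying system of the schematic form $\dot a_K=\frac{\epsilon^2}{L^2}\sum_{K_1-K_2+K_3=K}e\big((K^2-K_1^2+K_2^2-K_3^2)t\big)\,a_{K_1}\bar a_{K_2}a_{K_3}$. The integrable fingerprint already shows at this order: the cubic resonance $K_1-K_2+K_3=K$, $K_1^2-K_2^2+K_3^2=K^2$ forces $K_1K_3=KK_2$ and hence $\{K_1,K_3\}=\{K,K_2\}$, so the resonant cubic term is purely diagonal and only rotates each mode by a phase. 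I would extract it through the gauge transformation $u\mapsto e(\theta(t))u$ and the modulation $f$; together these encode the mass phase $\frac{\epsilon^2}{\pi L}\norm{u}_{L^2(\mathbb{T})}^2$ and the diagonal self-interaction phases recorded in Theorem \ref{th:main}.

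With the resonant cubic interaction absorbed into the phase, the next step is to remove the oscillatory (non-resonant) cubic terms by a near-identity normal-form transformation, dividing by the nonvanishing phase $K^2-K_1^2+K_2^2-K_3^2$. This substitution generates a quintic nonlinearity whose resonant part is the first genuinely dynamical contribution: a scaling count shows that it enters $\dot a_K$ with effective size $\sim\epsilon^4/L^2$, so that over $t\sim T_R$ it would accumulate an $O(1)$ effect --- precisely the window in which wave-turbulence theory would predict an energy cascade. The heart of the proof is to compute this resonant quintic coefficient exactly and to show that, after cancellation, it contributes only the explicit phases of Theorem \ref{th:main} and the function $Q_K(f)$ of Proposition \ref{prop2}, rather than a collision operator redistributing the $\langle K\rangle^\ell$-weighted mass among distinct frequencies. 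To organize the computation I would set up the correspondence between iterated normal-form substitutions and rooted Feynman trees, attach to each tree an amplitude and a cumulative phase, and evaluate the coefficients by induction on the tree; the vanishing property is then the identity that the amplitudes of all trees feeding a fixed resonant quintic configuration cancel down to $Q_K$.

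The main obstacle is exactly this vanishing property. A priori the normal-form expansion produces a proliferation of resonant quintic (and higher) diagrams, and one must prove that the dangerous ones --- those that would transfer weighted mass between distinct modes --- sum to zero. Here the integrability of the 1D cubic equation is indispensable, and I expect the Feynman-tree bookkeeping to convert the cancellation into a closed combinatorial identity among the tree amplitudes, playing the role that the higher conservation laws of the NLS hierarchy play in suppressing resonant transfer. Verifying that this combinatorics genuinely closes at the relevant order, while keeping the non-resonant remainder generated along the way under control, is the delicate point, and the robustness of the bookkeeping is what should let the argument carry over to other integrable systems.

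Finally I would close the estimate by a continuity/bootstrap argument in $X^\ell(\mathbb{Z}_L)$. Granting the a priori bound $\norm{a(t)}_{X^\ell(\mathbb{Z}_L)}\lesssim B$, the non-resonant cubic and quintic pieces are controlled by integrating the oscillatory sums in time and summing over the near-resonant lattice configurations, where the two derivatives carried by $f_0\in X^{\ell,2}(\mathbb{R})$ supply the decay needed for absolute convergence and allow the discrete sums to be replaced by their continuous analogues at the cost of $O(L^{-1+\gamma})$. Combined with the $O(\epsilon^2L^\gamma)$ loss from truncating the higher-order normal-form terms, this produces the stated bound $C_\gamma(\epsilon^2L^\gamma+L^{-1+\gamma})$; the smallness of $\epsilon^2L^\gamma$ and the largeness of $L$ then propagate the bootstrap over the whole interval $[0,MT_R]$ and complete the proof.
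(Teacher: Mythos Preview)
Your proposal is correct and follows essentially the same approach as the paper: profile extraction, diagonal cubic resonance removed by phase modulation, Poincar\'e--Dulac normal form pushing the non-resonant cubic to quintic order, the vanishing of the off-diagonal quintic resonant coefficients organized via Feynman trees (the paper's Theorem~\ref{MainTheorem1} and the computation leading to \eqref{eq:mainterm}), and a bootstrap in $X^\ell$ to close. The one point you underemphasize is that relating $u_K$ back to $d_K$ requires knowing $|d_K|^2$ to accuracy $O(\epsilon^4L^\gamma+\epsilon^2L^{-1+\gamma})$ rather than the $O(\epsilon^2L^\gamma+L^{-1})$ delivered by the main bootstrap, which is why the paper runs a separate refined energy estimate on $|c_K|^2$ (Proposition~\ref{prop2}) to produce $Q_K(f)$; and the error bounds on the higher normal-form remainders rest not on time-oscillation but on divisor-type lattice-point counts (Lemmas~\ref{l:numbertheory1}--\ref{l:numbertheory3}).
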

\begin{remark}
This system has infinitely many conserved quantities. In particular, $\norm{u}_{L^2(\mathbb{T})}$ is conserved. More precisely, we have $\norm{u}_{L^2(\mathbb{T})}=\norm{u_0}_{L^2(\mathbb{T})}=\frac{1}{L^2}\sum_{\mathbb{Z}_{L}}\abs{f_{0}(K)}^2<\infty$. 
\end{remark}
\begin{remark}
If we fix a function $u_0$ with $\text{supp } u_0\subseteq [-1,1]$, we can of course also think it as a periodic function on $\mathbb{T}_L=[-\frac{L}{2},\frac{L}{2}]$ (after translation we may also think it as function on $[0,L]$). If our initial data is of this form, $f_0(K)$ can be taken as the Fourier transform of $u_0$ on $\mathbb{R}$. More generally, for all spatially localized initial data, we can find $f_{0}(K)$ as in the assumption of Theorem \ref{th:main}. The existence of $f_0(K)$ is a type of spatial localization assumption.
\end{remark}

\begin{remark}
Recall that for a periodic function on $[0,L]$, its Fourier coefficients constitute a discrete function defined on $\mathbb{Z}_L$. As $L\rightarrow+\infty$, these Fourier coefficients are defined on a denser and denser set. So one may expect that there is asymptotically a continuous profile of this discrete function. Our result formulates this continuous profile. 
\end{remark}

The following parts of the introduction is devoted to explain three important aspects of our paper: the time in which the phenomenon described by our result happens, what is this phenomenon, and the tools of proving this phenomenon.

\subsection{The times scales of weakly nonlinear large box limit} The first important aspect in our paper is the time scale $T_R$. Now let us explain the power of $L$ and $\epsilon$ in $T_R=\frac{L^2}{\epsilon^4}$.

We have two trivial time scales:

\underline{The weak nonlinearity time scale:} By Duhamel's principle, we have
$$u(x,t)=e^{\frac{it}{2\pi}\partial^2} u_0+\varepsilon^{2}\int_0^te^{\frac{i(t-s)}{2\pi}\partial^2}|u|^{2}u ds.$$
If $t\in[0,\frac{1}{\varepsilon^{2}}]$, the local well-posedness can be established by doing contraction mapping argument which also gives us $u\sim e^{\frac{it}{2\pi}\partial^2} u_0$. So when $t\in[0,\frac{1}{\varepsilon^{2}}]$, the dynamics of the solution on torus is similar to that of a linear equation.

\underline{The large box time scale:} We may think that in physical space the solution is supported in a ball of radius $1$. Thus the dispersion relation $|\xi|^2$ tells us the speed of the wave is less than $1$. Hence it takes time $O(L)$ for the solution to see the boundary effect. Therefore, when $t\le L$, the dynamics of $u$ is similar to that of the NLS equation on $\mathbb{R}$.

In summary,  when $t\le \frac{1}{\epsilon^2}$ or $t\le L$, the solution can be either approximated by linear Schr\"odinger equation on $\mathbb{T}_L$ or Nonlinear Schrodinger equation on $\mathbb{R}$, so the dynamics are trivial. To go beyond these trivial time scales, for instance Faou-Germain-Hani in \cite{FGH} and Buckmaster-Germain-Hani-Shatah in \cite{BGHS1} derived the $continuous\ resonance$ (CR) equation, which describes the dynamics of NLS on rational torus in dimension higher than $1$ up to the resonant time-scale $L^2/\epsilon^2$. One could interpret the $L^2$ here as the period of the linear solution. In \cite{BGHS1}, they adapted the Hardy-Littlewood circle method to PDE setting and obtained a equidistribution result for the algebraic variety of resonance interactions. See for instance \cite{BGHS2} for more analysis of the CR equation. In \cite{BGHS3}, the authors upgrade the time scale to $\frac{L^{2.65}}{\epsilon^2}$ in higher dimensional case, given initial data with random phase on generic irrational torus. They derived the wave kinetic equation to describe the evolution of the moduli of the Fourier coefficients of the solution up to this time scale.

In this paper, we examine the one dimensional cubic equation up to the time-scale $L^2/\epsilon^4$. Although typically the long time behavior problem is more difficult in lower dimensional space, we improve the time scale in $\epsilon$ than that of previous results. The main idea to get this improvement is to take advantage of the integrability structure of 1D cubic NLS. Remarkably, our time scale $T_R=L^2/\epsilon^4$ is the characteristic time scale of the wave turbulence. In this time scale, we obtain a quasi-periodic behavior of the Fourier coefficients of the solution. We call this behavior modified scattering because of its similarity with the results of modified scattering on Euclidean space. In following two subsections, we shall discuss the this behavior and a structure of the equation leading to this behavior which is related to integrability.

\subsection{Modified scattering} Many researchers have discovered that the solutions of low dimensional dispersive PDEs with small initial data exhibit the modified scattering behavior, i.e. Fourier transform of the solution converges to the linear solution with a phase correction, contrary to the usual scattering results in which the limit is the linear solution without correction, see for example Kato-Pausader \cite{KP}, Ionescu-Pausader \cite{IoPa1}, \cite{IoPa2}. In our paper we use a change of phase argument to obtain a very similar result up to time $T_R=\frac{L^2}{\epsilon^4}$, the characteristic time of wave turbulence. One implication of our result is the solution exhibits modified scattering behavior instead of wave turbulence, when time of evolution is less than $T_R$, the wave turbulence time. This coincides with the intuition of physicists that integrable systems should not have wave turbulence behavior. 

Recall that in the work of Kato and Pusateri \cite{KP}, they derived the modified scattering result of 1D cubic NLS on $\R$. More precisely, they proved a scattering result in $L^\infty$ norm after doing a change phase correction which essentially due to the $t^{-1}$ decay in the time integration of Duhamel formula. The last term $\frac{\epsilon^2}{2\pi L^2}\int^t_{0}Q_{K}(f)(s) ds$ in our theorem also gives a $L^+$ phase factor after integration. So from this point of view two results are similar, although the proof are quite different. This is another reason why we call the behavior of the solution here the modified scattering. 

\subsection{A vanishing property of normal form transformation}
The basic tool of proving modified scattering here is the normal form transformation. This technique was first introduced by Shatah in \cite{Shatah1} in its original form of Poincar\'e normal form. A crucial improvement was first made by Germain-Shatah-Masmoudi \cite{GMS} with a new set of ideas called space-time resonance method. In \cite{GMS}, the vector field method and normal form transformation are elegantly unified in a single technique, in which differentiation by vector fields are replaced by integration by parts in space variables and normal form transformation are replaced by integration by parts in time variable. There are also other form of normal form transformation, for example the differentiation by parts approach in \cite{BIT}. But there are no essential difference between different approaches. 

It has been well known that normal form transformation can be applied to exploit the hidden smoothing effect in the nonlinearity to prove low regularity well-posedness results. This approach was initiated by Bourgain \cite{Bourgain1993}, then reintepreted as standard normal form transformation in \cite{Bourgain2004}, which also pointed out the connection with $I-$method introduced in \cite{CKSTT'}. For later development, see for example, \cite{BIT}, \cite{GKO}, \cite{OW} and section 4.1 of \cite{TaoBook}. 

Our main idea here is to approximate our system by a simpler ODE system (\ref{eq:cr}). Here the right hand side of (\ref{eq:cr}) is extracted from the main term given by the normal form transformation. We use the differentiation by parts approach in \cite{BIT} to perform this transformation. When working with this transformation to extract the main term, the fact that it is an integrable system plays a significant role. It has been studied that most of the normal form coefficients vanish for integrable systems. For the purpose of proving theorem \ref{th:main}, we only apply this vanishing property for quintic forms. But more generally, we prove this vanishing property holds for arbitrary order normal form transformation. Because of the vanishing properties of higher order term, it might be possible to control the dynamics up to a much longer time $\frac{L^3}{\epsilon^4}$, but further ideas are needed. For more explanation, see section \ref{sec:ideaproof}. Roughly speaking, we proved the following theorem. For a complete version of this theorem, see Theorem \ref{MainTheorem1}.

\begin{customthm}{2'}
For nonlinear Schrodinger equation (\ref{1NLS}), the coefficients $H_{K_1...K_{2d+3}}^{d+1}$ of its normal form vanish for $(K_1,...,K_{2d+3})$ outside some lower dimensional submanifolds of the resonance surface $\mathscr{R}$ for any $d\le P$. Here the resonance surface is defined by 

\begin{equation}
    \mathscr{R}=\{(K_1,...,K_{2d+3}):K_1-K_2+\cdots+K_{2d+3}=K,\ K_1^2-K_2^2+\cdots+K_{2d+3}^2=K^2\}
\end{equation}
\end{customthm}

To proof this Theorem, we need a very precise description of the normal form coefficients. This is done by the correspondence between Feynman diagrams and terms in normal form transformation established in Lemma \ref{l:formulaH^d}. The vanishing property is established by an inductive argument on the levels of Feynman diagrams. Feynman diagram is also a convenient tool in estimating the error in the normal form transformation (See Lemma \ref{l:numbertheory1}), although this application not as essential as the vanishing property. 

The vanishing property has been known by physicist as the cancellation of collision coefficients. Collision coefficients are not a rigorous mathematical definition but it is clearly related to normal form coefficients. For example, in the work of Zakharov and Schulman \cite{ZS}, \cite{ZSc} (Theorem 2.2.1), using perturbation expansion and S-matrix, they proved that the collision coefficients vanish for any systems on $\R$ if it possesses an additional conservation law other than energy and momentum. 
Their method does not apply to bounded domain like $\mathbb{T}$, because physically there are no scattering on this domain so it makes no sense to talk about S-matrix (scattering matrix). From mathematical point of view, this failure is because one heuristic identity
$$\lim_{\epsilon\rightarrow0}\frac{e^{iEt}}{E-i\epsilon}=\pi\delta(E),$$ is not true in any rigorous or non-rigorous sense on $\mathbb{T}$. For the vanishing property of quintic term, \cite{ZOCO} provides a calculation for all common integrable system, but it will be difficult to generalize the proof to higher order terms.

Feynman diagram is a very common tool in the investigation of normal form transformation, see for instance \cite{BGHS3}, \cite{LS}, \cite{GKO} and \cite{OW}. But for our purpose, we shall develop a new formula (\ref{eq:formulaH^d}) of normal form coefficients in terms of Feynman diagram. Here to prove the vanishing property, we shall exploit the graph structure of the Feynman diagram carefully.   

\subsection{Organization and Notations}\label{sec:notation}

This paper is roughly organized in the following way. In section \ref{sec:NF-section}, we perform a change of phase argument, normal form transformation, and obtain recurrence formulas of the coefficients. In section \ref{sec:Feynman}, we introduce Feynman diagrams, use them to formulate explicit formulas of norm form coefficients, and prove the vanishing property. In section \ref{sec:dynamics}, we establish our main long time dynamics result.

We will use following notations frequently.

\begin{itemize}
    \item $\mathbb{T}_L := [0,L]$ be the torus of length $L$.
    \item $\mathbb{Z}_L =  \frac{\Z}{L}=\{\frac{k}{L}:k\in\mathbb{Z}\}$.
    \item $e(z)=e^{2\pi i z}$ for $z\in\mathbb{C}$.
    \item  Given two quantities $A$ and $B$, we denote $\displaystyle \<A\> = \sqrt{1 +A^2}$ and $\displaystyle A \lesssim B$ if $\exists\ C$, a universal constant, such that $A \leq CB$.

    \item $L^+$ is the short hand of the quantity $A$ that satisfies $\forall\ \gamma\ll 1$, $\exists\ C_{\gamma}$ independent of $L$, s.t. $|A|\le C_{\gamma}L^{\gamma}$.

\noindent\underline{Fourier transform:}
    \item The Fourier transform of a function $f$ on $\R$ is 
    $$\mbox{if $\xi \in \mathbb{R}$}, \quad \mathcal{F} f (\xi) = \widehat{f} (\xi) = \int_{\mathbb{R}^d} e(-x \cdot \xi) f(x)\,dx,$$
    where $e(z)=e^{2\pi i z}$ for $z\in\mathbb{C}$.
    \item The inverse Fourier transform of of a function $f$ on $\R$ 
    $$\mathcal{F}^{-1} g (x) = \check{g} (x) =  \int_{\mathbb{R}^d} e(x \cdot \xi)  g(\xi)\,d\xi.$$
    \item The Fourier coefficients of a function $f$ defined on torus $\mathbb{T}_L$ is 
    $$f_K = \int_{\mathbb{T}_L} f(x) e(- K \cdot x) \,dx, \quad K \in \mathbb{Z}_L =  \frac{\Z}{L}.$$

The inversion formula is
\[
f(x) = \frac{1}{L} \sum_{K \in \mathbb{Z}_L} f_K e(K  \cdot x) .
\]

The Plancherel identity is 
$$||f||_{L^2{(\mathbb{T}_L)}}^2=\frac{1}{L}\sum_{K\in\mathbb{Z}_L} |f_K|^2.$$

We note that there are more than one definition of the Fourier coefficients. For example, $f_K = \frac{1}{L}\int_{\mathbb{T}_L} f(x) e(- K \cdot x) \,dx, \quad K \in \mathbb{Z}_L =  \frac{\Z}{L}$. Our choice of the definition is to ensure the presence of $L^{-1}$ in front of the inversion formula. This factor is necessary for the sum on the right hand side converging to a integral.

\noindent\underline{The functional spaces:}

\item  For function $f$ defined on $\R$, 
$$\| f \|_{X^{\ell}(\mathbb{R})} = \| \langle x \rangle^\ell f(x) \|_{L^\infty(\mathbb{R})},\ \ \|f\|_{X^{\ell,N}(\mathbb{R})}=\sum_{0\leq |\alpha|\leq N} \|\nabla^\alpha f\|_{X^\ell(\mathbb{R})}.$$
\item For function $a_K$ defined on $\mathbb{Z}_L$, $\| a_K \|_{X^\ell} = \sup_{K\in\mathbb{Z}_L}\| \langle K \rangle^\ell a_K \|.$ And for function $u$ defined on $\mathbb{T}_L$ with Fourier coefficients $u_K$, $\| u \|_{X^\ell(\mathbb{T}_L)} = \| u_K \|_{X^\ell}.$

\noindent\underline{Normal form transformation:}

       \item We will encounter following linear form and quadratic form,
$$\mathcal S_{2d+1,K}(K_1,\dots,K_{2d+1}) = K_1 - K_2  + \dots - K_{2d} + K_{2d+1} - K,$$
$$\Omega_{2d+1,K}(K_1,\dots, K_{2d+1}) =   K_1^2 - K_2^2 + \dots - K_{2d}^2+ K_{2d+1}^2 -K^2.$$
We write $\mathcal S_{2d+1,K}$ and $\Omega_{2d+1,K}$ if there is no confusion about the indices.

      \item We shall modify the phase of $u_K$ for several times, and each time we have a corresponding equation, for $a_K$ see (\ref{eq:profile0}), for $b_K$ see (\ref{eq:profileB}), for $d_K$ see (\ref{eq:profiled}). $c_K$ the variable derived from $d_K$ after applying $P$ times normal form, see (\ref{eq:transformedmain}).
       \item For the cubic quadratic form with time dependent correction, 
       $$\widetilde{\Omega}_{2d+1,K}=\Omega_{2d+1,K}-\frac{\epsilon^2}{2\pi L^2}D_{2d+1,K}(t),$$
where $$D_{2d+1,K}(t)=|d_{K_1}|^2-|d_{K_2}|^2+ \dots+|d_{K_{2d+1}}|^2-|d_K|^2.$$
      \item We use $H^d_{K_1K_2...K_{2d+1}}$, see (\ref{eq:formulaH^d}), and $G^d_{K_1K_2...K_{2d+1}}$, see (\ref{eq:formulaG^d}), to denote the coefficients of terms after $d$ times normal form transformation. 
      \item We use $S_K^d(u)$, $H_K^d(u)$, $G_K^d(u)$, $\widetilde{H}^{P+1}_{K}(u)$, $\widetilde{G}^{P+1}_{K}(u)$, $F_K^d(u)$, and $E_K^d(u)$ to denote polynomials in $u$ after normal form transformation, see (\ref{eq:recurrenceS^d_K}), (\ref{eq:Hd_u}), (\ref{eq:Gd_u}), (\ref{eq:HP_u}), (\ref{eq:GP_u}), (\ref{eq:Fd_u}), and (\ref{eq:Ed_u}).

\noindent\underline{Feynman diagrams:}

\item  $\displaystyle T\mbox{ is a tree in a forest of Feynman diagrams}$.
\item  $\displaystyle \CG_d \mbox{ is the forest of Feynman diagrams after d times normal form transformation}$. 
\item  $A_1$, $A_2$, $A_3$ are defined in (\ref{eq:defAi}) and $B_1$, $B_2$, $B_3$, $B_4$ are defined in (\ref{eq:defBi}).
\item  $*$, $\CA_k(T)$, $\CB_k(T)$, $\Omega_{2k+3,K}(T)$, $l_k^T$ and $S_{T_*}$ are defined in Lemma \ref{l:formulaH^d}.

\end{itemize}

{\bf Acknowledgments} The authors are very grateful to Jalal Shatah and their advisor Alexandru Ionescu for suggesting this question and the most helpful discussions. The authors would like to thank Pierre Germain for his illuminating questions. The authors are also grateful to Fan Zheng for his constant help.

\section{Normal Form Transformation}\label{sec:NF-section}

In this section, we shall obtain recurrence formulas of coefficients of the normal form transformation. Meanwhile, we extract the main term and the error terms from the nonlinearity. We remove the cubic resonance interaction by change of phase, but there will be a term left which forces us to further change phase with dependence on $K$. Later, we do normal form transformation in the same way as in \cite{BGHS1}. The main difficulty here is to deal with the extra term coming from the $K$ dependent phase correction. To deal with this, we will define coefficients $G^d_{K_1\cdots K_{2d+1}}$ in addition to coefficients $H^d_{K_1\cdots K_{2d+1}}$ introduced in \cite{BGHS1}. 

\subsection{Extraction of the Modified Phase}\label{sec:changephase} In this subsection, we first introduce the profile by removing the linear part. Then we separate the cubic nonlinearity into resonance and non-resonance interaction. While we keep the non-resonance interaction which will be removed later by normal form, the resonance interaction will be removed by changing phase.
In Fourier space, we have $u(t,x)=\frac{1}{L}\sum_{K\in\Z_L}u_K(t)e(Kx)$, where $K\in\Z_L=\frac{\Z}{L}$, $u_K$ satisfies
\begin{equation}\label{eq:uk}
- i \partial_t u_K +2\pi K^2 u_K= \frac{\epsilon^2}{L^{2}}\sum_{\mathcal{S}_{3,K} =0}  
u_{K_1} \overline{u_{K_2}} u_{K_3},
\end{equation}
where $K_i\in  \mathbb{Z}_L$, for $i=1, 2, 3$. 

Consider the profile $a_K=e(-K^2t)u_K$, $a_K$ satisfies 
\begin{equation}\label{eq:profile0}
- i \partial_t a_K = \frac{\epsilon^2}{L^{2}}\sum_{\mathcal{S}_{3,K} =0}  
a_{K_1} \overline{a_{K_2}} a_{K_3} e(\Omega_{3,K}t).
\end{equation} 

If we take $\epsilon=0$, the profile is a constant function in time. The profile is simply obtain by removing the effect of linear evolution, so it should grow much slower than $u_K$. But due to the presence of nonlinearity, variation of the profile still happens. 
The resonance interaction is the most important part of the nonlinearity which derives this variation.
In the cubic term, we separate the dynamics of $a_K$ into resonance and non-resonance parts, 
\begin{equation}\label{eq:profile1}
- i \partial_t a_K = \frac{\epsilon^2}{L^{2}} \sum_{\substack{\mathcal{S}_{3,K} =0\\ \Omega_{3,K}=0}} a_{K_1} \overline{a_{K_2}} a_{K_3} +\frac{\epsilon^2}{L^{2}} \sum_{\substack{\mathcal{S}_{3,K} =0\\ \Omega_{3,K}\ne0}} a_{K_1} \overline{a_{K_2}} a_{K_3} e(\Omega_{3,K}t).
\end{equation}

Since for any fixed $K$,
\begin{equation}\label{pNLS}
\left\{ \begin{array}{l}
\mathcal{S}_{3,K}(K_{1},K_{2},K_{3}) =0 \\
\Omega_{3,K}(K_1,K_2,K_3)=0
\end{array} \right.
\end{equation}
only has degenerated solutions $\{K_1,K_3\}=\{K,K_2\}$. The equation of $a_{K}$ can be written as:

\begin{align*}
- i \partial_t a_K &=\frac{\epsilon^2}{L^{2}} \sum_{\substack{\mathcal{S}_{3,K} =0\\ \Omega_{3,K}=0}} a_{K_1} \overline{a_{K_2}} a_{K_3} +\frac{\epsilon^2}{L^{2}} \sum_{\substack{\mathcal{S}_{3,K} =0\\ \Omega_{3,K}\ne0}} a_{K_1} \overline{a_{K_2}} a_{K_3} e(\Omega_{3,K}t)\\
&=\frac{\epsilon^2}{L^{2}}\left(2\sum_{K_1\in\Z_L} |a_{K_1}|^2-|a_K|^2\right)a_K+\frac{\epsilon^2}{L^{2}} \sum_{\substack{\mathcal{S}_{3,K} =0\\ \Omega_{3,K}\ne0}} a_{K_1} \overline{a_{K_2}} a_{K_3} e(\Omega_{3,K}t),
\end{align*}

Then multiplying the profile by a phase factor, one can remove the cubic resonance interaction. Notice that $\sum_{K_1\in\Z_L} |a_{K_1}|^2=L\norm{u}_{L^2}^2$ is conserved
and let $$b_K=e(-\frac{\epsilon^2}{\pi L^{2}}\sum_{K_1\in\Z_L} |a_{K_1}|^2t)a_K,$$ we have
\begin{align}\label{eq:profileB}
\begin{split}
- i \partial_t b_K =& e\left(-\frac{\epsilon^2}{\pi L^{2}}\sum_{K_1\in\Z_L} |a_{K_1}|^2t\right)\left(-i\partial_t a_K-\frac{2\epsilon^2}{L^{2}}\sum_{K_1\in\Z_L} |a_{K_1}|^2a_K\right)\\
=&-\frac{\epsilon^2}{L^{2}}|b_K|^2b_K+ \frac{\epsilon^2}{L^{2}} \sum_{\substack{\mathcal{S}_{3,K} =0\\ \Omega_{3,K}\ne0}} b_{K_1} \overline{b_{K_2}} b_{K_3} e(\Omega_{3,K}t).
\end{split}
\end{align}

Let 
$$d_K:=b_Ke(\frac{\epsilon^2}{2\pi L^2}\int_0^t|b_K(s)|^2ds)=a_Ke(-\frac{\epsilon^2}{\pi L}\norm{u}_{L^2}^2t+\frac{\epsilon^2}{2\pi L^2}\int_0^t|a_K(s)|^2ds),$$
note that $|d_K|=|b_K|=|a_K|=|u_K|$.

Define
$$\widetilde{\Omega}_{3,K}=\Omega_{3,K}-\frac{\epsilon^2}{2\pi L^2}(|d_{K_1}|^2-|d_{K_2}|^2+|d_{K_3}|^2-|d_K|^2)$$

and $$D_{3,K}(t)=|d_{K_1}|^2-|d_{K_2}|^2+|d_{K_3}|^2-|d_K|^2.$$

Then $d_K$ satisfies
\begin{align}\label{eq:profiled}
- i \partial_t d_K &=\frac{\epsilon^2}{L^{2}} \sum_{\substack{\mathcal{S}_{3,K} =0\\ \Omega_{3,K}\ne0}} d_{K_1} \overline{d_{K_2}} d_{K_3} e(\int_0^t\widetilde{\Omega}_{3,K}(s)ds).
\end{align}
So far, we have removed the cubic resonance interaction.

\subsection{Normal Form Transformation of the Cubic Term}\label{sec:nftcubic}
The normal form transformation means the process of separating the resonance and non-resonance interaction of the nonlinearity and using differentiation by parts argument on the nonresonance interaction. The nonresonance interaction are small due to cancellation of the phase factor. In the same spirit as oscillatory integral, an appropriate way of exploiting this cancellation is by integration by parts or differentiation by parts.

As in the oscillatory integral setting, if we have $\int_{\mathbb{R}} e(\lambda\phi(x)) f(x)dx$, as $\lambda$ goes to infinity, the oscillation in $x$ of the exponential factor $e(\lambda\phi(x))$ is getting stronger and stronger. So the value of this integral should be smaller and smaller. A common idea of exploiting the oscillation is to do the integration by parts argument in $x$, i.e. $e(\lambda\phi(x))=\frac{\partial_t e(\lambda\phi(x))}{2\pi i\lambda\phi'(x)}$. Each time of applying this argument we gain one $\frac{1}{\lambda}$ , provided no stationary points $x_0$ s.t. $\phi'(x_0)=0$.

Now our situation is very similar.We have a term like $d_{K_1} \overline{d_{K_2}}d_{K_{3}} e(\Omega_{3,K}t)$ which is oscillating rapidly when $t$ is large. We apply the identity $\frac{\partial_t e(\Omega_{3,K}t)}{\Omega_{3,K}}$ and get
$$d_{K_1} \overline{d_{K_2}} d_{K_{3}} e(\Omega_{3,K}t)
=\partial_t\left(\frac{d_{K_1} \overline{d_{K_2}} d_{K_{3}}e(\Omega_{3,K}t)}{\Omega_{3,K}}\right)-\frac{  e(\Omega_{3,K}t) }{\Omega_{3,K}}  \partial_t(d_{K_1} \overline{d_{K_2}}d_{K_{3}}).$$
The first term can be absorbed by $- i \partial_t d_K$ on the left hand side and the second term will give us $\epsilon^2$ if we substitute the equation of $\partial_t d_K$. But notice that because we changed phase, we have $e(\int_0^t\widetilde{\Omega}_{3,K}(s)ds)$ instead of $e(\Omega_{3,K}t)$. We do differentiation by parts on $e(\Omega_{3,K}t)$ since we have the restriction that $e(\Omega_{3,K}t)\ne0$ and leave a factor $e(-\frac{\epsilon^2}{2\pi L^2}\int_0^t D_{3,K}(s)ds)$. When the time derivative hit this factor, we gain $\epsilon^2/L^2$.

\begin{align}
- i \partial_t d_K =&\frac{\epsilon^2}{L^{2}} \sum_{\substack{\mathcal{S}_{3,K} =0\\ \Omega_{3,K}\ne0}} d_{K_1} \overline{d_{K_2}} d_{K_3} e(\int_0^t\widetilde{\Omega}_{3,K}(s)ds)
\\
=&\frac{\epsilon^2}{L^{2}} \sum_{\substack{\mathcal{S}_{3,K} =0\\ \Omega_{3,K}\ne0}} d_{K_1} \overline{d_{K_2}} d_{K_3} \frac{\partial_t e(\Omega_{3,K}t)}{2\pi i \Omega_{3,K}}e(-\frac{\epsilon^2}{2\pi L^2}\int_0^t D_{3,K}(s)ds)
\\
=&\frac{\epsilon^2}{L^{2}} \sum_{\substack{\mathcal{S}_{3,K} =0\\ \Omega_{3,K}\ne0}} \partial_t\big(d_{K_1} \overline{d_{K_2}} d_{K_3}\frac{1}{2\pi i\Omega_{3,K}}e(\int_0^t\widetilde{\Omega}_{3,K}(s)ds) \big)
\\
-&\frac{\epsilon^2}{L^{2}} \sum_{\substack{\mathcal{S}_{3,K} =0\\ \Omega_{3,K}\ne0}} \frac{1}{2\pi i\Omega_{3,K}} \partial_t\left(d_{K_1} \overline{d_{K_2}} d_{K_3}e(-\frac{\epsilon^2}{2\pi L^2}\int_0^t D_{3,K}(s)ds)\right)e(\Omega_{3,K}t)
\end{align}

Absorb the second term on the right hand side which is of the form $\partial_t(...)$ by the left hand side, 

\begin{align}\label{eq:ck}
\begin{split}
&- i \partial_t \Big(d_K-\frac{\epsilon^2}{L^{2}} \sum_{\substack{\mathcal{S}_{3,K} =0\\ \Omega_{3,K}\ne0}} d_{K_1} \overline{d_{K_2}} d_{K_3}\frac{1}{\Omega_{3,K}} e(\int_0^t\widetilde{\Omega}_{3,K}(s)ds) \Big)
\\
&=-\frac{\epsilon^2}{L^{2}} \sum_{\substack{\mathcal{S}_{3,K} =0\\ \Omega_{3,K}\ne0}} \frac{1}{2\pi i\Omega_{3,K}} \partial_t\left(d_{K_1} \overline{d_{K_2}} d_{K_3}e(-\frac{\epsilon^2}{2\pi L^2}\int_0^t D_{3,K}(s)ds)\right)e(\Omega_{3,K}t)
\\
&=-\frac{\epsilon^2}{L^{2}} \sum_{\substack{\mathcal{S}_{3,K} =0\\ \Omega_{3,K}\ne0}} \frac{1}{2\pi i\Omega_{3,K}} \partial_t\left(d_{K_1} \overline{d_{K_2}} d_{K_3}\right)e(\int_0^t\widetilde{\Omega}_{3,K}(s)ds)
\\
&+\frac{\epsilon^4}{L^{4}} \sum_{\substack{\mathcal{S}_{3,K} =0\\ \Omega_{3,K}\ne0}} \frac{1}{2\pi \Omega_{3,K}} d_{K_1} \overline{d_{K_2}} d_{K_3} D_{3,K}(t)e(\int_0^t\widetilde{\Omega}_{3,K}(s)ds)
\end{split}
\end{align}

Let
\begin{equation}
c_K^1=d_K-\frac{\epsilon^2}{L^{2}} \sum_{\substack{\mathcal{S}_{3,K} =0\\ \Omega_{3,K}\ne0}} d_{K_1} \overline{d_{K_2}} d_{K_3}\frac{1}{2\pi\Omega_{3,K}} e(\int_0^t\widetilde{\Omega}_{3,K}(s)ds),
\end{equation}
then 
\begin{align}\label{eq:profileC}
\begin{split}
- i \partial_t c^1_K &=-\frac{\epsilon^2}{L^{2}} \sum_{\substack{\mathcal{S}_{3,K} =0\\ \Omega_{3,K}\ne0}} \frac{1}{2\pi i\Omega_{3,K}} \partial_t\left(d_{K_1} \overline{d_{K_2}} d_{K_3}\right)e(\int_0^t\widetilde{\Omega}_{3,K}(s)ds)
\\
&+\frac{\epsilon^4}{L^{4}} \sum_{\substack{\mathcal{S}_{3,K} =0\\ \Omega_{3,K}\ne0}} \frac{1}{2\pi \Omega_{3,K}} d_{K_1} \overline{d_{K_2}} d_{K_3} D_{3,K}(t)e(\int_0^t\widetilde{\Omega}_{3,K}(s)ds)
\end{split}
\end{align}

When the time derivative hits one of the $d_{K_j}$, for example when it hits $d_{K_3}$, we have
\begin{align}\label{eq:profileC1}
\begin{split}
&-\frac{\epsilon^2}{L^{2}} \sum_{\substack{\mathcal{S}_{3,K} =0\\ \Omega_{3,K}\ne0}} \frac{1}{2\pi   i\Omega_{3,K}}d_{K_1} \overline{d_{K_2}} \partial_td_{K_3} e(\int_0^t\widetilde{\Omega}_{3,K}(s)ds)
\\
=&-\frac{\epsilon^2}{L^{2}} \sum_{\substack{K_1-K_2+K_3 =K\\ K_1^2-K_2^2+K_3^2\ne K^2}} \frac{e(\int_0^t\widetilde{\Omega}_{3,K_3}(s)ds)}{2\pi i \Omega_{3,K}(K_1,K_2,K_3)}\frac{\epsilon^2}{L^{2}} d_{K_1} \overline{d_{K_2}}\Big(\sum_{\substack{K_3=K'_3-K_4+K_5}} d_{K'_3}\overline{d_{K_4}} d_{K_5}\Big)e(\int_0^t\widetilde{\Omega}_{3,K}(s)ds) 
\end{split}
\end{align}
Relabel the indices, we have
\begin{align}
\begin{split}
&-\frac{\epsilon^2}{L^{2}} \sum_{\substack{K_1-K_2+K_3 =K\\ K_1\ne K_2,K_1\ne K}} \frac{1}{2\pi i \Omega_{3,K}(K_1,K_2,K_3)}\frac{\epsilon^2}{L^{2}} d_{K_1} \overline{d_{K_2}}\times\\
&\ \ \ \ \ \ \ \ \ \ \ \ \ \ \ \ \ \ \ \ \ \ \ \ \ \ \ \ \ \ \ \ \ \ \ \ \times\Big(\sum_{\substack{K_3=K'_3-K_4+K_5}} d_{K'_3}\overline{d_{K_4}} d_{K_5}e(\int_0^t(\widetilde{\Omega}_{3,K_3}(s)+\widetilde{\Omega}_{3,K}(s))ds)\Big)
\\
=&-\frac{\epsilon^4}{L^{4}} \sum_{\substack{K_1-K_2+K_3-K_4+K_5 =K\\ K_1\ne K_2,K_1\ne K}} \frac{d_{K_1} \overline{d_{K_2}}d_{K_3}\overline{d_{K_4}} d_{K_5}}{2\pi i \Omega_{3,K}(K_1,K_2,K_3-K_4+K_5)}\times\\ 
&\ \ \ \ \ \ \ \ \ \ \ \ \ \ \ \ \ \ \ \ \ \ \ \ \ \ \ \ \ \ \ \ \ \ \ \ \ \ \ \ \ \ \times e(\int_{0}^t\widetilde{\Omega}_{5,K}(K_1,K_2,K_3,K_4,K_5)(s)ds)
\end{split}
\end{align}
Do this to each term in the expansion of (\ref{eq:profileC}) and relabel the indices. We have
\begin{align}\label{eq:profileC3}
\begin{split}
&- i \partial_t c^1_K = \frac{i\epsilon^4}{2\pi L^{4}} \sum_{\substack{\mathcal{S}_{5,K} =0}} d_{K_1} \overline{d_{K_2}} d_{K_3} \overline{d_{K_4}} d_{K_5} e(\int_0^t\widetilde{\Omega}_{5,K}(s)ds)
\Big[\frac{1_{K_1\ne K,K_5\ne K, K_2\ne K_3, K_3\ne K_4}}{  \Omega_{3,K}(K_1,K_2-K_3+K_4,K_5)}\\
&-\frac{1_{K_1\ne K_2, K_2\ne K_3, K_4\ne K_5,K_5\ne K}}{  \Omega_{3,K}(K_1-K_2+K_3,K_4,K_5)}-\frac{1_{K_1\ne K_2,K_1\ne K, K_3\ne K_4, K_4\ne K_5}}{  \Omega_{3,K}(K_1,K_2,K_3-K_4+K_5)}\Big]
\\
&+\frac{\epsilon^4}{L^{4}} \sum_{\substack{\mathcal{S}_{3,K} =0\\ \Omega_{3,K}\ne0}} \frac{1}{2\pi \Omega_{3,K}} d_{K_1} \overline{d_{K_2}} d_{K_3} D_{3,K}(t)e(\int_0^t\widetilde{\Omega}_{3,K}(s)ds).
\end{split}
\end{align}
By doing normal form transformation once, we have transformed the cubic non-resonance interaction into a quintic form. 

Let us introduce the following sets

\begin{equation}\label{eq:defBi}
\begin{split}
    &B_1=\{K_1=K_2=K_3,K_3\ne K_4,K_4\ne K_5\},\\
    &B_2=\{K_1\ne K_2,K_2=K_3=K_4,K_4\ne K_5\},\\
    &B_3=\{K_1\ne K_2,K_2\ne K_3,K_3=K_4=K_5\},\\
    &B_4=\{K_1\ne K_2,K_2\ne K_3,K_4=K_5=K\}.
\end{split}
\end{equation}

Then the second term of the left hand side may be reformulated as 

\begin{equation}
\begin{split}
    &\frac{\epsilon^4}{L^{4}} \sum_{\substack{\mathcal{S}_{3,K} =0\\ \Omega_{3,K}\ne0}} \frac{1}{2\pi \Omega_{3,K}} d_{K_1} \overline{d_{K_2}} d_{K_3} D_{3,K}(t)e(\int_0^t\widetilde{\Omega}_{3,K}(s)ds)\\
    =&\frac{\epsilon^4}{L^{4}} \sum_{\substack{\mathcal{S}_{5,K} =0}} \frac{1_{B_1}-1_{B_2}+1_{B_3}-1_{B_4}}{2\pi \Omega_{5,K}} d_{K_1} \overline{d_{K_2}} d_{K_3} \overline{d_{K_4}} d_{K_5} e(\int_0^t\widetilde{\Omega}_{5,K}(s)ds)
\end{split}
\end{equation}

Denote $G^2_{K_1...K_5}=\frac{1_{B_1}-1_{B_2}+1_{B_3}-1_{B_4}}{2\pi \Omega_{5,K}}$ and
\begin{align*}
\begin{split}
&H^2_{K_1K_2\cdots K_5}=\frac{1_{A_2}}{  2\pi\Omega_{3,K}(K_1,K_2-K_3+K_4,K_5)}\\
&-\frac{1_{A_1}}{  2\pi\Omega_{3,K}(K_1-K_2+K_3,K_4,K_5)}-\frac{1_{A_3}}{  2\pi\Omega_{3,K}(K_1,K_2,K_3-K_4+K_5)},
\end{split}
\end{align*}

where
\begin{equation}\label{eq:defAi}
\begin{split}
&A_1=\{K_1\ne K_2,K_2\ne K_3,K_4\ne K_5,K_5\ne K\},
\\
&A_2=\{K_1\ne K,K_2\ne K_3,K_3\ne K_4,K_5\ne K\},
\\
&A_3=\{K_1\ne K_2,K_1\ne K,K_3\ne K_4,K_4\ne K_5\}.
\end{split}
\end{equation}

We rewrite (\ref{eq:profileC3}) as

\begin{equation}\label{eq:ck1}
 \begin{split}
- i \partial_t c_K^1&=\frac{\epsilon^4}{L^{4}} \sum_{\substack{\mathcal{S}_{5,K} =0}}H^2_{K_1K_2\cdots K_5} d_{K_1} \overline{d_{K_2}} d_{K_3} \overline{d_{K_4}} d_{K_5} e(\int_0^t\widetilde{\Omega}_{5,K}(s)ds)\\
&+\frac{\epsilon^4}{L^{4}} \sum_{\substack{\mathcal{S}_{5,K} =0}} G^2_{K_1...K_5} d_{K_1} \overline{d_{K_2}} d_{K_3} \overline{d_{K_4}} d_{K_5} e(\int_0^t\widetilde{\Omega}_{5,K}(s)ds).
\end{split}   
\end{equation}

\subsection{Higher Order Normal Form Transformations} 
In this subsection, we continue with normal form transformation up to $P$ times to remove the quintic form into higher order forms. Notice that in (\ref{eq:ck1}), we don't have the restriction that prevents $\Omega_{5,K}$ to vanish. So we shall do higher order normal form transformation by first splitting the sums in (\ref{eq:ck1}) into $\sum_{\substack{\mathcal{S}_{5,K} =0}}=\sum_{\substack{\mathcal{S}_{5,K} =0\\\Omega_{5,K}=0}}+\sum_{\substack{\mathcal{S}_{5,K} =0\\\Omega_{5,K}\ne0}}$, then always leaving the resonance interaction i.e. $\Omega_{5,K}=0$, and doing differentiation by parts on the nonresonance interaction i.e. $\Omega_{5,K}\ne0$. In doing the differentiation by parts we exploit
$$e(\int_0^t\widetilde{\Omega}_{5,K}(s)ds)=e(-\frac{\epsilon^2}{2\pi L^2}\int_{0}^t D_{5,K}(s) ds) \frac{\partial_{t}e(\Omega_{5,K}t)}{2\pi i\Omega_{5,K}}$$ and use differentiation by parts to move the $\partial_t$ to $d_{K_1}\overline{d_{K_2}}\cdots d_{K_{5}}$ or $e(-\frac{\epsilon^2}{2\pi L^2}\int_{0}^t D_{5,K}(s) ds)$. When the derivative hits the product of $d_{K_j}$, we substitute the equation for $\partial_td_{K_j}$ to produce higher order terms and keep do differentiation by parts to these term. When the derivative hits $e(-\frac{\epsilon^2}{2\pi L^2}\int_{0}^t D_{5,K}(s) ds)$, we get an extra $\frac{\epsilon^2}{L^2}D_{5,K}(s)$ factor. We denote this term by $S^{2}_K(u)$ and leave it unchanged in following normal form transformations. Then repeat this process for $7$ order terms and higher order terms. Let $c^d_K$ be the variable after $d$ times normal form transformation. By about analysis we may assume that the equations after $d$ normal form transformations writes

\begin{equation*}
\begin{split}
    -i\partial_tc^d_K=&\frac{\epsilon^4}{L^{4}}\sum_{\substack{\mathcal{S}_{5,K} =0\\\Omega_{5,K}=0}} H^2_{K_1K_2\cdots K_5} d_{K_1}\overline{d_{K_2}}\cdots d_{K_5}e(-\frac{\epsilon^2}{2\pi L^2}\int_0^tD_{5,K}(s)ds)
    \\
    +&\frac{\epsilon^6}{L^{6}}\sum_{\substack{\mathcal{S}_{7,K} =0\\\Omega_{7,K}=0}} (H^3_{K_1K_2\cdots K_7}+G^3_{K_1K_2\cdots K_7})d_{K_1}\overline{d_{K_2}}\cdots d_{K_7}e(-\frac{\epsilon^2}{2\pi L^2}\int_0^tD_{7,K}(s)ds)+\cdots
    \\
    +&\frac{\epsilon^{2d}}{L^{2d}}\sum_{\substack{\mathcal{S}_{2d+1,K} =0\\\Omega_{2d+1,K}=0}} (H^d_{K_1K_2...K_{2d+1}}+G^d_{K_1K_2...K_{2d+1}}) d_{K_1}\overline{d_{K_2}}\cdots d_{K_{2d+1}}e(-\frac{\epsilon^2}{2\pi L^2}\int_0^tD_{2d+1,K}(s)ds)
    \\
    +&\frac{\epsilon^{2d+2}}{L^{2d+2}}\sum_{\substack{\mathcal{S}_{2d+3,K} =0}} H^{d+1}_{K_1K_2...K_{2d+3}}d_{K_1}\overline{d_{K_2}}\cdots d_{K_{2d+3}} e(\int_0^t\widetilde{\Omega}_{2d+3,K}(s)ds)
    \\
    +&\frac{\epsilon^{2d+2}}{L^{2d+2}}\sum_{\substack{\mathcal{S}_{2d+3,K} =0}} G^{d+1}_{K_1K_2...K_{2d+3}}d_{K_1}\overline{d_{K_2}}\cdots d_{K_{2d+3}} e(\int_0^t\widetilde{\Omega}_{2d+3,K}(s)ds)
    \\
    +&\sum_{k=3}^{d+1} \frac{\epsilon^{2k}}{L^{2k}} S^{k}_{K}(u),
\end{split}
\end{equation*}
where \begin{equation*}
    \widetilde{\Omega}_{2d+3,K}(t)=\Omega_{2d+1,K}-\frac{\epsilon^2}{2\pi L^2}\sum_{j=1}^{2d+1}(-1)^{j-1}|d_{K_j}|^2(t),
\end{equation*}
\begin{equation*}
    D_{2d+1,K}=\sum_{j=1}^{2d+1}(-1)^{j-1}|d_{K_j}|^2(t).
\end{equation*}
Note that because $G^2_{K_1...K_5}=\frac{1_{B_1}-1_{B_2}+1_{B_3}-1_{B_4}}{2\pi \Omega_{5,K}}$ vanishes on the resonance surface $\{\mathcal{S}_{5,K} =0, \Omega_{5,K}=0\}$, we don't have $G^2_{K_1...K_5}$ in the first term on the right hand side. Do differentiation by parts in the fourth line and fifth line in above equation,

\begin{equation}
\begin{split}
    &d_{K_1}\overline{d_{K_2}}\cdots d_{K_{2d+3}} e(\int_0^t\widetilde{\Omega}_{2d+3,K}(s)ds)
    \\
    =&d_{K_1}\overline{d_{K_2}}\cdots d_{K_{2d+3}}e(-\frac{\epsilon^2}{2\pi L^2}\int_{0}^t D_{2d+3,K}(s) ds) \frac{\partial_{t}e(\Omega_{2d+3,K}t)}{2\pi i\Omega_{2d+3,K}}
    \\
    =&i\partial_{t}\left(d_{K_1}\overline{d_{K_2}}\cdots d_{K_{2d+3}} \frac{e(\int_0^t\widetilde{\Omega}_{2d+3,K}(s)ds)}{2\pi\Omega_{2d+3,K}}\right)
    \\
    -&\partial_{t}\left(d_{K_1}\overline{d_{K_2}}...d_{K_{2d+3}}e(-\frac{\epsilon^2}{2\pi L^2}\int_{0}^t D_{2d+3,K}(s) ds)\right)\frac{e(\Omega_{2d+3,K}t)}{2\pi i\Omega_{2d+3,K}}.
\end{split}
\end{equation}

If the $\partial_t$ in the second term hits $d_{K_1}\overline{d_{K_2}}...d_{K_{2d+3}}$. Then by substitute the equation of $\partial_td_K$ (\ref{eq:profiled}), we have

\begin{equation}
\begin{split}
    &-\frac{\epsilon^{2d+2}}{L^{2d+2}}\sum_{\substack{\mathcal{S}_{2d+3,K} =0}} H^{d+1}_{K_1K_2...K_{2d+3}}\partial_t(d_{K_1}\overline{d_{K_2}}\cdots d_{K_{2d+3}}) e(\int_0^t\widetilde{\Omega}_{2d+3,K}(s)ds)
    \\
    =&-\frac{\epsilon^{2d+4}}{L^{2d+4}}\sum_{j=1}^{2d+3}\sum_{\substack{\mathcal{S}_{2d+3,K} =0\\\Omega_{2d+3,K}\ne0}} H^{d+1}_{K_1K_2...K_{2d+3}}d_{K_1}...\\
    &\ \ \ \ \ \ \ \  \times\left(\sum_{\substack{S_{3,K_j}(M,N,L)=0}} d_M\overline{d_N}d_L e(\int_0^t\widetilde{\Omega}_{3,K_j}(M,N,L)(s) ds)\right)...d_{K_{2d+3}} \frac{e(\int_0^t\widetilde{\Omega}_{2d+3,K}(s)ds)}{2\pi \Omega_{2d+3,K}}
    \\
    =&\frac{\epsilon^{2d+4}}{L^{2d+4}}\sum_{\substack{\mathcal{S}_{2d+5,K} =0}}\Big(\sum_{j=1}^{2d+3}\frac{(-1)^j1_{\Omega_{2d+3,K}(K_1,...,K_j-K_{j+1}+K_{j+2},...,K_{2d+5})\ne0}H^{d+1}_{K_1...(K_j-K_{j+1}+K_{j+2})...K_{2d+5}}}{2\pi\Omega_{2d+3,K}(K_1,...,K_j-K_{j+1}+K_{j+2},...,K_{2d+5})}\Big)
    \\
   &\times d_{K_1}\overline{d_{K_2}}...d_{K_{2d+5}}e(\int_0^t\widetilde{\Omega}_{2d+5,K}(s)ds).
\end{split}
\end{equation}

The term involving $G$ can be treated the same as above. If the $\partial_t$ in the second term hits $e(-\frac{\epsilon^2}{2\pi L^2}\int_{0}^t D_{2d+3,K}(s) ds)$, we get

\begin{equation}
    \sum_{\substack{\mathcal{S}_{2d+3,K} =0}} (H^{d+1}_{K_1K_2...K_{2d+3}}+G^{d+1}_{K_1K_2...K_{2d+3}}) d_{K_1}\overline{d_{K_2}}...d_{K_{2d+3}}\frac{D_{2d+3,K}(t)}{2\pi\Omega_{2d+3,K}} e(\int_0^t\widetilde{\Omega}_{2d+3,K}(s)ds).
\end{equation}

Combining all above calculations, we have

\begin{align*}
    &-i\partial_t\Big(c^d_K-\frac{\epsilon^{2d+2}}{L^{2d+2}}\sum_{\substack{\mathcal{S}_{2d+3,K}=0\\\Omega_{2d+3,K}\ne0}} (H^{d+1}_{K_1K_2...K_{2d+3}}+G^{d+1}_{K_1K_2...K_{2d+3}}) d_{K_1}\overline{d_{K_2}}...d_{K_{2d+3}} \frac{e(\int_0^t\widetilde{\Omega}_{2d+3,K}(s)ds)}{2\pi\Omega_{2d+3,K}} \Big)
    \\
    =&\frac{\epsilon^4}{L^{4}}\sum_{\substack{\mathcal{S}_{5,K} =0\\\Omega_{5,K}=0}} H^2_{K_1K_2\cdots K_5} d_{K_1}\overline{d_{K_2}}\cdots d_{K_5}e(-\frac{\epsilon^2}{2\pi L^2}\int_0^tD_{5,K}(s)ds)
    \\
    +&\frac{\epsilon^6}{L^{6}}\sum_{\substack{\mathcal{S}_{7,K} =0\\\Omega_{7,K}=0}} (H^3_{K_1K_2\cdots K_7}+G^3_{K_1K_2\cdots K_7})d_{K_1}\overline{d_{K_2}}\cdots d_{K_7}e(-\frac{\epsilon^2}{2\pi L^2}\int_0^tD_{7,K}(s)ds)+\cdots
    \\
    +&\frac{\epsilon^{2d+2}}{L^{2d+2}}\sum_{\substack{\mathcal{S}_{2d+3,K} =0\\\Omega_{2d+3,K}=0}} (H^{d+1}_{K_1K_2...K_{2d+3}}+G^{d+1}_{K_1K_2...K_{2d+3}}) d_{K_1}\overline{d_{K_2}}\cdots d_{K_{2d+3}}e(-\frac{\epsilon^2}{2\pi L^2}\int_0^tD_{2d+3,K}(s)ds)
    \\
    +&\frac{\epsilon^{2d+4}}{L^{2d+4}}\sum_{\substack{\mathcal{S}_{2d+5,K} =0}}\Big(\sum_{j=1}^{2d+3}\frac{(-1)^j1_{\Omega_{2d+3,K}(K_1,...,K_j-K_{j+1}+K_{j+2},...,K_{2d+5})\ne0}H^{d+1}_{K_1...(K_j-K_{j+1}+K_{j+2})...K_{2d+5}}}{2\pi\Omega_{2d+3,K}(K_1,...,K_j-K_{j+1}+K_{j+2},...,K_{2d+5})}\Big)
    \\
    &\times d_{K_1}\overline{d_{K_2}}...d_{K_{2d+5}}e(\int_0^t\widetilde{\Omega}_{2d+5,K}(s)ds)
    \\
    +&\frac{\epsilon^{2d+4}}{L^{2d+4}}\sum_{\substack{\mathcal{S}_{2d+5,K} =0}}\Big(\sum_{j=1}^{2d+3}\frac{(-1)^j1_{\Omega_{2d+3,K}(K_1,...,K_j-K_{j+1}+K_{j+2},...,K_{2d+5})\ne0}G^{d+1}_{K_1...(K_j-K_{j+1}+K_{j+2})...K_{2d+5}}}{2\pi\Omega_{2d+3,K}(K_1,...,K_j-K_{j+1}+K_{j+2},...,K_{2d+5})}\Big)
    \\
    &\times d_{K_1}\overline{d_{K_2}}...d_{K_{2d+5}}e(\int_0^t\widetilde{\Omega}_{2d+5,K}(s)ds)
    +\sum_{k=3}^{d+1} \frac{\epsilon^{2k}}{L^{2k}} S^{k}_{K}(u)
    \\
    +&\frac{\epsilon^{2d+4}}{L^{2d+4}}\sum_{\substack{\mathcal{S}_{2d+3,K} =0}} (H^{d+1}_{K_1K_2...K_{2d+3}}+G^{d+1}_{K_1K_2...K_{2d+3}}) d_{K_1}\overline{d_{K_2}}...d_{K_{2d+3}}\frac{D_{2d+3,K}(t)}{2\pi\Omega_{2d+3,K}} e(\int_0^t\widetilde{\Omega}_{2d+3,K}(s)ds)
\end{align*}

From above calculation, we get the recurrence formulas for $c^{d+1}_{K}$, $H^{d+2}$, $G^{d+2}$ and $S^{d+2}_K(u)$.

\begin{equation}\label{eq:recurrencec^d_K}
    c^{d+1}_K=c^d_K-\frac{\epsilon^{2d+2}}{L^{2d+2}}\sum_{\substack{\mathcal{S}_{2d+3,K}=0\\\Omega_{2d+3,K}\ne0}} (H^{d+1}_{K_1K_2...K_{2d+3}}+G^{d+1}_{K_1K_2...K_{2d+3}}) d_{K_1}\overline{d_{K_2}}\cdots d_{K_{2d+3}} e(\int_0^t\widetilde{\Omega}_{2d+3,K}(s)ds),
\end{equation}

\begin{equation}\label{eq:recurrenceH^d_K}
\begin{split}
    &H^{d+2}_{K_1...K_{2d+5}}=\\
    &\sum_{j=1}^{2d+3}\frac{(-1)^j1_{K_j\ne K_{j+1}, K_{j+1}\ne K_{j+2}, \Omega_{2d+3,K}(K_1,...,K_j-K_{j+1}+K_{j+2},...,K_{2d+5})\ne0}H^{d+1}_{K_1...(K_j-K_{j+1}+K_{j+2})...K_{2d+5}}}{2\pi\Omega_{2d+3,K}(K_1,...,K_j-K_{j+1}+K_{j+2},...,K_{2d+5})},
    \end{split}
\end{equation}

\begin{equation}\label{eq:recurrenceG^d_K}
\begin{split}
    &G^{d+2}_{K_1...K_{2d+5}}=\\
    \sum_{j=1}^{2d+3}&\frac{(-1)^j1_{K_j\ne K_{j+1}, K_{j+1}\ne K_{j+2}, \Omega_{2d+3,K}(K_1,...,K_j-K_{j+1}+K_{j+2},...,K_{2d+5})\ne0}G^{d+1}_{K_1...(K_j-K_{j+1}+K_{j+2})...K_{2d+5}}}{2\pi\Omega_{2d+3,K}(K_1,...,K_j-K_{j+1}+K_{j+2},...,K_{2d+5})}.
\end{split}
\end{equation}

\begin{equation}\label{eq:recurrenceS^d_K}
\begin{split}
    &S^{d+2}_K(u)=\\
    &\frac{\epsilon^{2d+4}}{L^{2d+4}}\sum_{\substack{\mathcal{S}_{2d+3,K} =0}} (H^{d+1}_{K_1K_2...K_{2d+3}}+G^{d+1}_{K_1K_2...K_{2d+3}}) d_{K_1}\overline{d_{K_2}}...d_{K_{2d+3}}\frac{D_{2d+3,K}(t)}{2\pi\Omega_{2d+3,K}} e(\int_0^t\widetilde{\Omega}_{2d+3,K}(s)ds).
\end{split}
\end{equation}

Initially, 

\begin{equation}\label{eq:initialc}
    c_K^1=d_K-\frac{\epsilon^2}{L^{2}} \sum_{\substack{\mathcal{S}_{3,K} =0\\ \Omega_{3,K}\ne0}} d_{K_1} \overline{d_{K_2}} d_{K_3}\frac{1}{2\pi\Omega_{3,K}} e(\int_0^t\widetilde{\Omega}_{3,K}(s)ds),
\end{equation}

\begin{equation}\label{eq:initialH}
\begin{split}
H^2_{K_1K_2\cdots K_5}=&\frac{1_{A_2}}{  \Omega_{3,K}(K_1,K_2-K_3+K_4,K_5)}\\
&-\frac{1_{A_1}}{  \Omega_{3,K}(K_1-K_2+K_3,K_4,K_5)}-\frac{1_{A_3}}{  \Omega_{3,K}(K_1,K_2,K_3-K_4+K_5)},
\end{split}    
\end{equation}

\begin{equation}\label{eq:initialG}
    G^2_{K_1...K_5}=\frac{1_{B_1}-1_{B_2}+1_{B_3}-1_{B_4}}{2\pi \Omega_{5,K}}
\end{equation}

with $A_1,A_2,A_3$ defined in (\ref{eq:defAi}) $B_1,B_2,B_3.B_4$ defined in (\ref{eq:defBi}).

Combining (\ref{eq:recurrencec^d_K}), (\ref{eq:recurrenceH^d_K}), (\ref{eq:recurrenceG^d_K}) (\ref{eq:initialc}), (\ref{eq:initialH}), (\ref{eq:initialG}). we can calculate all the quantities in the normal form transformations.

Assume that we have done normal form transformation for $P$ times. We also introduce the following notations

\begin{equation}\label{eq:Hd_u}
    H^{d}_{K}(u)=\sum_{\substack{\mathcal{S}_{2d+1,K} =0\\\Omega_{2d+1,K}=0}} H^d_{K_1K_2...K_{2d+1}}d_{K_1}\overline{d_{K_2}}...d_{K_{2d+1}}e(\int_0^tD_{2d+1,K}(s)ds),
\end{equation}

\begin{equation}\label{eq:Gd_u}
    G^{d}_{K}(u)=\sum_{\substack{\mathcal{S}_{2d+1,K} =0\\\Omega_{2d+1,K}=0}} G^d_{K_1K_2...K_{2d+1}}d_{K_1}\overline{d_{K_2}}...d_{K_{2d+1}}e(\int_0^tD_{2d+1,K}(s)ds),
\end{equation}

\begin{equation}\label{eq:HP_u}
    \widetilde{H}^{P+1}_{K}(u)=\sum_{\substack{\mathcal{S}_{2P+3,K} =0}} H^{P+1}_{K_1K_2...K_{2P+3}}d_{K_1}\overline{d_{K_2}}...d_{K_{2P+3}} e(\int_0^t\widetilde{\Omega}_{2P+3,K}(s)ds),
\end{equation}

\begin{equation}\label{eq:GP_u}
    \widetilde{G}^{P+1}_{K}(u)=\sum_{\substack{\mathcal{S}_{2P+3,K} =0}} G^{P+1}_{K_1K_2...K_{2P+3}}d_{K_1}\overline{d_{K_2}}...d_{K_{2P+3}} e(\int_0^t\widetilde{\Omega}_{2P+3,K}(s)ds),
\end{equation}

\begin{equation}\label{eq:Fd_u}
    F^{d}_{K}(u)=\sum_{\substack{\mathcal{S}_{2d+1,K}=0\\\Omega_{2d+1,K}\ne0}} H^{d}_{K_1K_2...K_{2d+1}}d_{K_1}\overline{d_{K_2}}...d_{K_{2d+1}} \frac{1}{2\pi\widetilde{\Omega}_{2d+1,K}} e(\int_0^t\Omega_{2d+1,K}(s)ds).
\end{equation}

\begin{equation}\label{eq:Ed_u}
    E^{d}_{K}(u)=\sum_{\substack{\mathcal{S}_{2d+1,K}=0\\\Omega_{2d+1,K}\ne0}} G^{d}_{K_1K_2...K_{2d+1}}d_{K_1}\overline{d_{K_2}}...d_{K_{2d+1}} \frac{1}{2\pi\widetilde{\Omega}_{2d+1,K}} e(\int_0^t\Omega_{2d+1,K}(s)ds).
\end{equation}

Denote $c_K^P$ by $c_K$, we have
\begin{equation}
    c_K=d_K-\sum_{d=1}^{P}\frac{\epsilon^{2d}}{L^{2d}}F^{d}_{K}(u)-\sum_{d=1}^{P}\frac{\epsilon^{2d}}{L^{2d}}E^{d}_{K}(u),
\end{equation}

And the transformed equation writes

\begin{equation}\label{eq:transformedmain}
\begin{split}
    -i\partial_tc_K=\frac{\epsilon^{4}}{L^{4}}H_K^2(u)+\sum_{d=3}^P\frac{\epsilon^{2d}}{L^{2d}}(H^{d}_{K}(u)+G^{d}_{K}(u))+\sum_{d=3}^{P+1} \frac{\epsilon^{2d}}{L^{2d}} S^{d}_{K}(u)+\frac{\epsilon^{2P+2}}{L^{2P+2}}(\widetilde{H}^{P+1}_{K}(u)+\widetilde{G}^{P+1}_{K}(u))
\end{split}
\end{equation}
We shall extract our main term from $\frac{\epsilon^{4}}{L^{4}}H_K^2(u)$ by using vanishing property.

\section{The Feynman Diagrams and Cancellations of Resonances}\label{sec:Feynman}

In this section, we introduce the Feynman diagrams and get explicit formulas for coefficients $H^{d}_{K_1K_2...K_{2d+1}}$ and $G^d_{K_1K_2...K_{2d+1}}$ of normal form transformation. This formula applies to all dispersive equations. Then we apply an induction argument on the level of trees in our Feynman diagram to prove a vanishing property of the coefficient of normal form transformation. 

Feynman diagrams are very convenient notation of terms in repeated normal form transformations. In our paper, they are forests with each tree in them corresponds to a term in $H^d$, $\widetilde{H}^P$, $G^d$ and $\widetilde{G}^P$. Each node of bottoms of these trees correspond to summation of indices in these terms. Sometimes we do not distinguish $H^d$ and $\widetilde{H}^P$, $G^d$ and $\widetilde{G}^P$ because they have the same coefficient. And in the later, we just explain how Feynman diagram technique for coefficients $H$, coefficients $G$ can be treated in a very similar way.

\subsection{Feynman Diagrams and Repeated Normal Form Transformations}\label{sec:3.1}

Let's now describe how Feynman diagrams work. Assume that We have done normal form transformations for $P$-times. We shall draw a forest to denote all terms. 

We start with $P=0$, the case when we only have a cubic nonlinearity (\ref{eq:profiled}). We draw root node $R$ and three children which correspond to three summation indices $K_1,K_2,K_3$ in the cubic term. We call the level of $K_1,K_2,K_3$ level 0 to refer that we have not done any normal form. Later on each node of level $d$ will correspond to a summation index of a $2d+3$-linear form. 

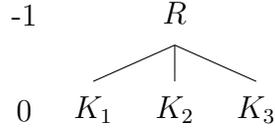
\begin{figure}[H]
\centering
\begin{forest}
[, phantom 
   [$R$, name=level-1 [$K_{1}$, name=level0  ]
                     [$K_{2}$ ]
                     [$K_{3}$ ]  
   ] 
]
{
\setcounter{levelcount}{0}
\countnodes{level-1}{2}
\countnodes{level0}{2}
}
\end{forest}
\caption{Feynman diagram before normal form transformation}
\end{figure}
When $P=1$, we have done the normal form transformation once. From (\ref{eq:profileC}), we know that three new terms will appear in $\widetilde{H}_{2}$, according to which $d_{K_j}$ gets $\partial_t$. We add three trees in our forests with branching at $K_j$ when $\partial_t$ hits $d_{K_j}$.

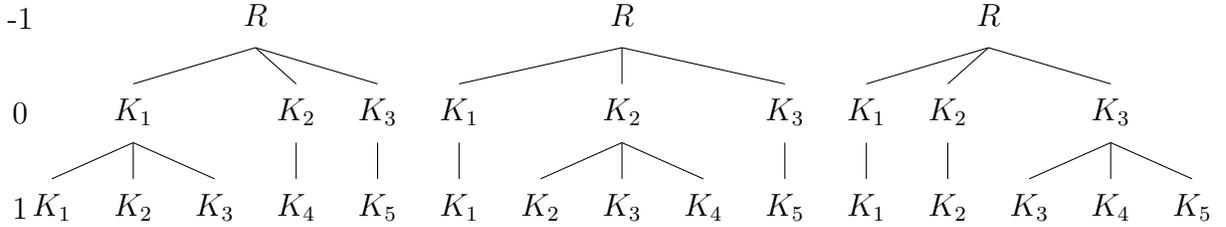
\begin{figure}[H]
\centering
\begin{forest}
[,phantom
  [$R$, name=level-1   [$K_{1}$, name=level0 [$K_{1}$, name=level1] 
              [$K_{2}$]
              [$K_{3}$]
         ]
         [$K_{2}$ [$K_{4}$]
         ]
         [$K_{3}$ [$K_{5}$]
         ]  
  ]
  [$R$   [$K_{1}$ [$K_{1}$]
         ]
         [$K_{2}$ [$K_{2}$] 
                [$K_{3}$]
                [$K_{4}$]
         ]
         [$K_{3}$ [$K_{5}$]
         ]  
  ]
  [$R$   [$K_{1}$ [$K_{1}$] 
         ]
         [$K_{2}$ [$K_{2}$]
         ]
         [$K_{3}$ [$K_{3}$] 
                  [$K_{4}$]
                  [$K_{5}$]
         ]  
  ]
]
{
\setcounter{levelcount}{0}
\countnodes{level-1}{8}
\countnodes{level0}{8}
\countnodes{level1}{8}
}
\end{forest}
\caption{Feynman diagram of one normal form transformation}\label{pic:one}
\end{figure}
Note that each tree corresponds to a $5-$linear term in $H^2$ (\ref{eq:profileC3}), (\ref{eq:initialH}). For example, the first tree in (\ref{pic:one})corresponds to 

$$-\frac{\epsilon^4}{2\pi L^{4}} \sum_{\substack{\mathcal{S}_{5,K} =0\\\Omega_{5,K} =0}} d_{K_1} \overline{d_{K_2}} d_{K_3} \overline{d_{K_4}} d_{K_5} e(-\frac{\epsilon^2}{2\pi L^2}\int_0^tD_{5,K}(s)ds)\frac{1_{K_4\ne K_5,K_5\ne K}}{  \Omega_{3,K}(K_1-K_2+K_3,K_4,K_5)}.$$

Nodes in the bottom of the first tree correspond to summation indices $K_1,...,K_5$ in above sum. The $K_1$, $K_2$, $K_3$ nodes are summation indices in previous cubic term $H^1$. Recall that above sum comes from substitute $\partial_t d_{K_1}$ in $H_1$ in $\sum\partial_td_{K_1}d_{K_2}d_{K_3}$ when $\partial_t$ hits the first one in $d_{K_1}d_{K_2}d_{K_3}$. When doing this, we get $\sum d_{K_1'}d_{K_2'}d_{K_3'}d_{K_2}d_{K_3}$ from $\sum\partial_td_{K_1}d_{K_2}d_{K_3}$ with $K_1=K_1'-K_2'+K_3'$. Then we relabel the indices $K_1'\rightarrow K_1$, $K_2'\rightarrow K_2$, $K_3'\rightarrow K_3$, $K_2\rightarrow K_4$, $K_3\rightarrow K_5$. So we find that $K_1$, $K_2$, $K_3$ on the bottom come from $K_1$ of level 0. This is they are children of $K_1$. For the same reason $K_4$ is a child of $K_2$, $K_5$ is a child of $K_5$. Because $K_1=K_1'-K_2'+K_3'$, $\Omega(K_1, K_2, K_3)\rightarrow\Omega_{3,K}(K_1-K_2+K_3,K_4,K_5)$

When $P=2$, we have done the normal form transformation twice. By (\ref{eq:recurrenceH^d_K}), one term in previous 5-linear term generates 5 new terms in $\widetilde{H}^{P+1}=\widetilde{H}^{3}$, according to which $d_{K_j}$ gets $\partial_t$ in the second transformation. We add trees accordingly. For example, from the first tree in (\ref{pic:one}), we have
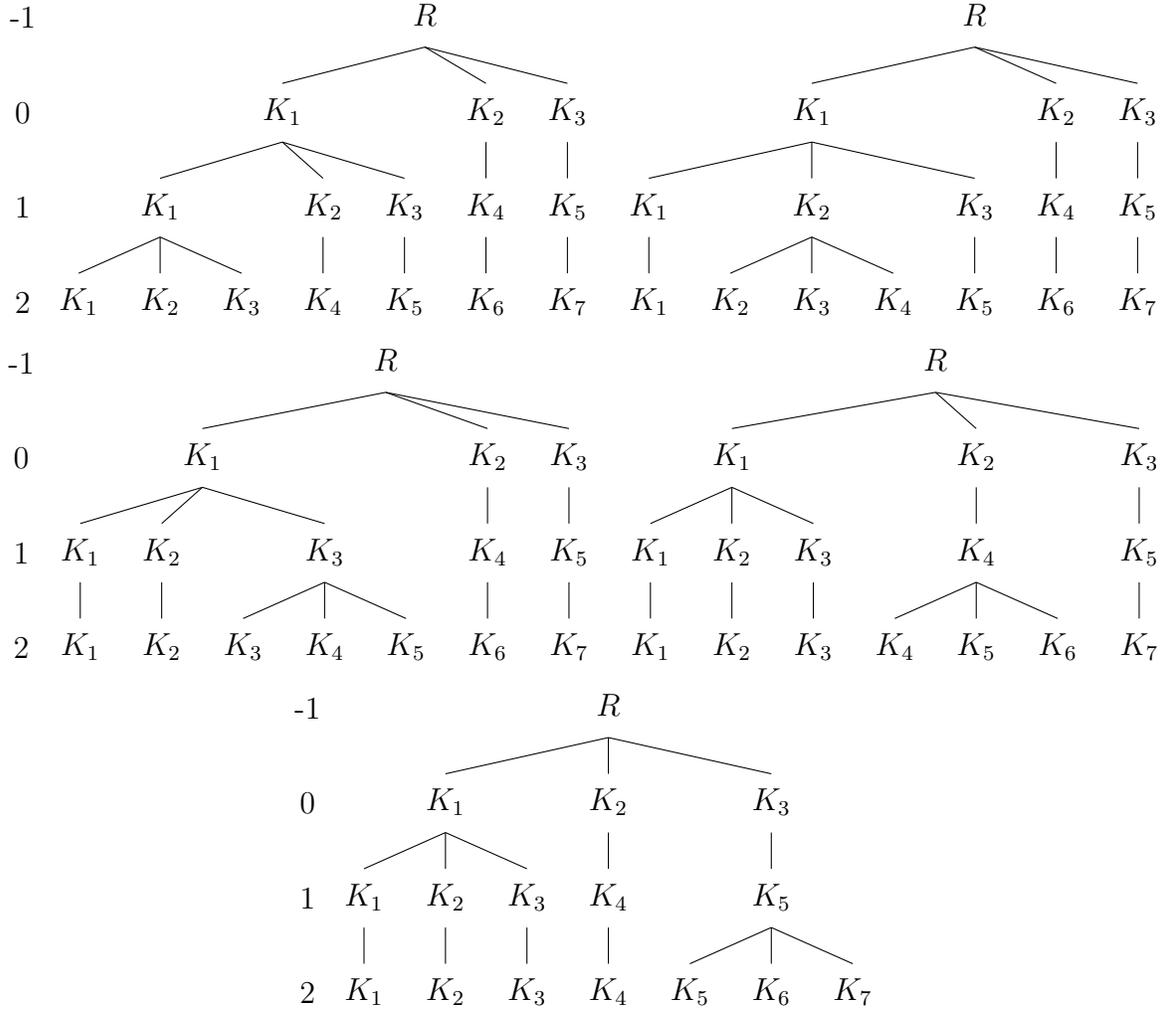
\begin{figure}[H]
\centering
\begin{forest}
[,phantom
  [$R$, name=level-1   [$K_1$, name=level0 [$K_1$, name=level1 [$K_1$, name=level2]
                       [$K_2$]
                       [$K_3$]
                ]
                [$K_2$ [$K_4$]]
                [$K_3$ [$K_5$]]
         ]
         [$K_2$ [$K_4$ [$K_6$]]
         ]
         [$K_3$ [$K_5$ [$K_7$]]
         ]  
  ]
  [$R$   [$K_1$ [$K_1$ [$K_1$]]
                [$K_2$ [$K_2$]
                       [$K_3$]
                       [$K_4$]
                ]
                [$K_3$ [$K_5$]]
         ]
         [$K_2$ [$K_4$ [$K_6$]]
         ]
         [$K_3$ [$K_5$ [$K_7$]]
         ]  
  ]
]
{
\setcounter{levelcount}{0}
\countnodes{level-1}{9}
\countnodes{level0}{9}
\countnodes{level1}{9}
\countnodes{level2}{9}
}
\end{forest}

\begin{forest}
[,phantom
  [$R$, name=level-1   [$K_1$, name=level0 [$K_1$, name=level1 [$K_1$, name=level2]] 
                [$K_2$ [$K_2$]]
                [$K_3$ [$K_3$]
                       [$K_4$]
                       [$K_5$]
                ]
         ]
         [$K_2$ [$K_4$ [$K_6$]]
         ]
         [$K_3$ [$K_5$ [$K_7$]]
         ]  
  ]
  [$R$   [$K_1$ [$K_1$ [$K_1$]]
                [$K_2$ [$K_2$]]
                [$K_3$ [$K_3$]]
         ]
         [$K_2$ [$K_4$ [$K_4$]
                       [$K_5$]
                       [$K_6$]
                ]
         ]
         [$K_3$ [$K_5$ [$K_7$]]
         ]  
  ]
]
{
\setcounter{levelcount}{0}
\countnodes{level-1}{8.5}
\countnodes{level0}{8.5}
\countnodes{level1}{8.5}
\countnodes{level2}{8.5}
}
\end{forest}

\begin{forest}
[,phantom
  [$R$, name=level-1   [$K_1$, name=level0 [$K_1$, name=level1 [$K_1$, name=level2]]
                [$K_2$ [$K_2$]]
                [$K_3$ [$K_3$]]
         ]
         [$K_2$ [$K_4$ [$K_4$]]
         ]
         [$K_3$ [$K_5$ [$K_5$]
                       [$K_6$]
                       [$K_7$]
                ]
         ] 
    ]
]  
]
{
\setcounter{levelcount}{0}
\countnodes{level-1}{4}
\countnodes{level0}{4}
\countnodes{level1}{4}
\countnodes{level2}{4}
}
\end{forest}
\caption{Feynman diagram of two normal form transformations}
\end{figure}

Nodes with the same root node correspond to summation indices of the same sum. Note that each time we do normal form transformation will introduce $\frac{1}{\Omega}$, see (\ref{eq:recurrenceH^d_K})). When finishing $P$ normal form transformations, we will have a product of $P$ different $\frac{1}{\Omega}$. Therefore, we have denominator of $\widetilde{H}^{P+1}$ in the form of  $$\Omega_{3,K}(*,*,*)\Omega_{5,K}(*,...,*)\cdots\Omega_{2P+1,K}(*,...,*).$$   
For example, when $P=2$, the denominator of $H^3$ is of the form
$$\Omega_{3,K}(*,*,*)\Omega_{5,K}(*,*,*,*,*).$$ These $*$ can be decided as following. Assume that we are considering the term corresponding to 
\begin{center}
\begin{forest}
[, phantom  
[$R$, name=level-1   [$K_1$, name=level0 [$K_1$, name=level1 [$K_1$, name=level2]
                     [$K_2$]
                     [$K_3$]
              ]
              [$K_2$ [$K_4$]]
              [$K_3$ [$K_5$]]
       ]
       [$K_2$ [$K_4$ [$K_6$]]
       ]
       [$K_3$ [$K_5$ [$K_7$]]
       ]  
]
]
{
\setcounter{levelcount}{0}
\countnodes{level-1}{5.5}
\countnodes{level0}{5.5}
\countnodes{level1}{5.5}
\countnodes{level2}{5.5}
}
\end{forest}
\end{center}
Because $\Omega_{3,K}(*,*,*)$ comes from the first normal form transformation, its three entries correspond to summation indices of $\widetilde{H}^{1}$ before normal form transformation which are $K_1$, $K_2$, $K_3$ on level 0. After doing the first normal form transformation, $K_1\rightarrow K_1-K_2+K_3$, $K_2\rightarrow K_4$ and $K_1\rightarrow K_5$, where these new $K_j$ are indices on level 1. After second normal form transformation, $K_1$ on level 1 becomes $K_1-K_2+K_3$, $K_1$ of level 0 becomes $K_1\rightarrow K_1-K_2+K_3\rightarrow K_1-K_2+K_3-K_4+K_5$. From this calculation we know that an index $K_j$ will finally become the alternating sum of the indices on the bottom of the subtree rooting on node $K_j$. For example,  $K_1$ of level 0 becomes $K_1\rightarrow K_1-K_2+K_3-K_4+K_5$ which is the alternating sum of the bottom of 
\begin{center}
\begin{forest}
[$K_1$, name=level-1 [$K_1$, name=level0 [$K_1$, name=level1]
              [$K_2$]
              [$K_3$]
       ]
       [$K_2$ [$K_4$]]
       [$K_3$ [$K_5$]]
]
{
\setcounter{levelcount}{1}
\countnodes{level-1}{4}
\countnodes{level0}{4}
\countnodes{level1}{4}
}
\end{forest}
\end{center}

$K_2$ of level 0 becomes $K_2\rightarrow K_6$ which is the alternating sum of the bottom of 
\begin{center}
\begin{forest}
[$K_2$, name=level-1 [$K_4$, name=level0 [$K_6$, name=level1]]]
{
\setcounter{levelcount}{1}
\countnodes{level-1}{1}
\countnodes{level0}{1}
\countnodes{level1}{1}
}
\end{forest}
\end{center}

Therefore, in $\Omega_{3,K}(*,*,*)$, $*$ is the alternating sum of the bottom of the subtree rooting on corresponding nodes.

$\Omega_{5,K}(*,*,*,*,*)$ is exactly the same as above, noticing that $\Omega_{5,K}$ comes from second transformation and five $*$ should correspond to five indices on level 1. Therefore, we have the denominator of $H^3$ is of the form
$$\Omega_{3,K}(K_1-K_2+K_3-K_4+K_5,K_6,K_7)\Omega_{5,K}(K_1-K_2+K_3,K_4,K_5,K_6,K_7).$$

\subsection{An Algorithmic Description of the Construction of Feynman Diagram} In this section, we shall provide an algorithm for drawing a Feynman diagram.

In what follows, we shall denote a Feynman diagram after $P-$th normal form transformation $\CG_P$ and it corresponds to terms in $\widetilde{H}^{P+1}_K(u)$. In general, we use $\CG_d$ to denote the forest after $d$ times normal form transformation. Notice that this means each tree in $\CG_d$ has $d+2$ level and the last level is level $d$.

\begin{enumerate}

    \item Initially, we put a $R$ node on level $-1$ with 3 children $K_1,K_2,K_3$, which correspond to the summation indices in $\widetilde{H}^{1}_K(u)$.  This completes $\CG_0$. We proceed inductively by drawing $\CG_{d+1}$ given $\CG_d$. 
   
   \item For a given tree $T$ in $\CG_d$, its bottom contains nodes $K_1,\cdots,K_{2d+3}$. We generate $2d+3$ trees with one more level. The $m$-th ($m=1,...,2d+3$) tree is generated by adding three unlabelled children of $K_j$ to $T$ if $j=m$ and adding one unlabelled children of $K_j$ if $j\ne m$. $\CG_{d+1}$ will be the collection of all these new trees. See also the figure below.

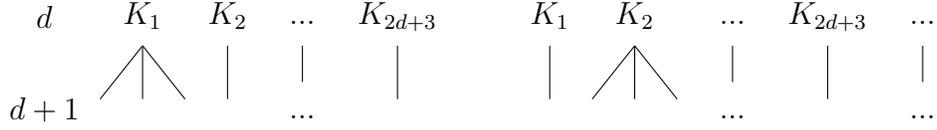
\begin{figure}[H]
\centering
 \begin{forest}
   [, phantom
     [$K_1$, name=leveld [, name=leveld1]
            []
            []
     ]
     [$K_2$ []] 
     [... [...]] 
     [$K_{2d+3}$ []]
   ]
   {
    \draw let \p{L} = (leveld) in (-3,\y{L}) node {$d$}; 
    \draw let \p{L} = (leveld1) in (-3,\y{L}) node {$d+1$};
   }
   \end{forest} \qquad \begin{forest}
   [, phantom
     [$K_1$ []
     ]
     [$K_2$ []
            []
            []
     ] 
     [... [...]] 
     [$K_{2d+3}$ []]
     [... [...]]
   ]
   \end{forest}
   \caption{Generating new level with $m=1$}\label{pic:newlevel1}
   \end{figure}

   \item The last step is to label all new nodes by $K_1$ to $K_{2d+5}$ from left to right. 
\end{enumerate}

\subsection{Correspondence of Trees and Terms}
We summarize the correspondence between the trees and terms in the coefficients of $\widetilde{H}^{d+1}_K(u)$, $\widetilde{H}^{d+1}_K(u)$ in (\ref{eq:formulaH^d}), (\ref{eq:formulaG^d}). We shall mainly focus on these coefficient $H^{d+1}_{K_1...K_{2d+3}}$, $G^{d+1}_{K_1...K_{2d+3}}$.  

A distinguished feature of (\ref{eq:formulaH^d}) is the apperance of the inequality constrains $1_{\CA}$, $1_{\CB}$. It's very important to keep track on these inequality constrains. Because without them, some denominators $\Omega$ will vanish. And there will be a large number of trivial solutions to the defining equation of (\ref{eq:mainnumbertheorydef}), which prevent (\ref{eq:mainnumbertheoryineq}) to be true.

\begin{lemma}\label{l:formulaH^d} We have the following description of $H^{d+1}_{K_1...K_{2d+3}}$, $G^{d+1}_{K_1...K_{2d+3}}$
\begin{enumerate}
    \item In general, $H^{d+1}_{K_1...K_{2d+3}}$ and $G^{d+1}_{K_1...K_{2d+3}}$ is a sum of rational functions of $K_1,...,K_{2d+3},K$ multiplied by indicator functions. Any $T\in\CG_d$ corresponds to one term in $H^{d+1}_{K_1...K_{2d+3}}$ or $G^{d+1}_{K_1...K_{2d+3}}$.
    
    \item For all subtree $T'$ of a tree $T$ in a Feynman diagram, denote $$S_{T'}=\text{Alternating sum of the indices of the bottom of $T'$}$$ (the alternating sum of a sequence $K_1$, $K_2$, ... is $K_1-K_2+K_3-...$). And for any node $*$ in a diagram denote by $T_*$ the subtree which has root $*$. 
    
    Then the corresponding term of $T$ in $H^{d+1}_{K_1...K_{2d+3}}$ is
    \begin{equation}
        \frac{\epsilon_{\CA}}{\Omega_{3,K}(T)\Omega_{5,K}(T)\cdots\Omega_{2d+1,K}(T)}.
    \end{equation}
    
    And the corresponding term of $T$ in $G^{d+1}_{K_1...K_{2d+3}}$ is
    \begin{equation}
        \frac{\epsilon_{\CB}}{\Omega_{5,K}(T)^2\Omega_{7,K}(T)\cdots\Omega_{2d+1,K}(T)}.
    \end{equation}
    
    Here
    \begin{equation}\label{eq:formulaOmega}
    \Omega_{2k+1,K}(T)=2\pi\Omega_{2k+1,K}(S_{T_{*_1}},S_{T_{*_2}},\cdots,S_{T_{*_{2k+1}}})
    \end{equation}
    for $k=1,2,...,d$, where $*_{j}$ is the $j$-th nodes from left to right on the level $k-1$ of the tree $T$. $\epsilon_{\CA}$ are functions of $K_1$, ..., $K_{2d+3}$, $K$ taking value in $\{-1,0,1\}$ and $\epsilon_{\CB}$ is a sum of four such functions.

    \item Let $l^T_{k}$ be the number such that $*_{l^T_{k}}$ is the only node of level $k$ that has a $3-$branching. Assume $*'$, $*''$ and $*'''$ are three children of the branching on level $k-1$. Let $S^1_{k}=S_{T_{*'}}$, $S^2_{k}=S_{T_{*''}}$, $S^3_{k}=S_{T_{*'''}}$, $\CA_{k}(T)=\CB_{k}(T)=\{S^1_{k}\ne S^2_{k},S^2_{k}\ne S^3_{k},\Omega_{2k+3,K}(T)\ne0\}$, 
    $\CB^{(s)}_{0}(T)=\{S^{s}_{1}=S^{s+1}_{1}=S^{s+2}_{1}\}$ $(1\le s\le 3)$, $\CB^{(4)}_{0}(T)=\{S^{4}_{1}=S^{5}_{1}=K\}$. Then in above equations $$\epsilon_{\CA}=(-1)^{\sum_{k=1}^{d-1}l^T_{k}}\prod^{d-1}_{k=1}1_{\CA_{k}(T)},$$ $$\epsilon_{\CB}=\sum_{s=1}^{4}{(-1)^{s+\sum^{d-1}_{k=2}l^T_{k}}1_{\CB^{(s)}_{0}(T)}\prod^{d-1}_{k=1}1_{\CB_{k}(T)}}.$$ 
    Thus
    \begin{equation}\label{eq:formulaH^d}
        H^{d+1}_{K_1...K_{2d+3}}=\sum_{T\in\CG_d}\frac{(-1)^{\sum_{k=0}^{d-1}l^T_{k}}\prod^{d-1}_{k=0}1_{\CA_{k}(T)}}{\Omega_{3,K}(T)\Omega_{5,K}(T)\cdots\Omega_{2d+1,K}(T)}(K_1,...,K_{2d+3}).
    \end{equation}
    
    \begin{equation}\label{eq:formulaG^d}
        G^{d+1}_{K_1...K_{2d+3}}=\sum_{s=1}^{4}\sum_{T\in\CG_d}\frac{(-1)^{s+\sum^{d-1}_{k=2}l^T_{k}}1_{\CB^{(s)}_{0}(T)}\prod^{d-1}_{k=1}1_{\CB_{k}(T)}}{\Omega_{5,K}(T)^2\Omega_{7,K}(T)\cdots\Omega_{2d+1,K}(T)}(K_1,...,K_{2d+3})d_{K_1}\cdots d_{K_{2d+3}}
    \end{equation}
\end{enumerate}
\end{lemma}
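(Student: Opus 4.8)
The plan is to prove Lemma \ref{l:formulaH^d} by induction on $d$, reading off the claimed product formula directly from the algorithmic construction of the forest $\CG_d$ given in Section \ref{sec:3.1}. The base case $d=1$ is exactly the explicit expression (\ref{eq:initialH}) for $H^2_{K_1\cdots K_5}$ and (\ref{eq:initialG}) for $G^2$: here $\CG_1$ has three trees (see Figure \ref{pic:one}), each contributing a single factor $1/\Omega_{3,K}(\ast,\ast,\ast)$ with entries given by the appropriate alternating sums, the signs $(-1)^{l^T_0}$ reproducing the $+,-,-$ pattern, and the indicator sets $A_1,A_2,A_3$ matching $1_{\CA_0(T)}$. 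First I would carefully match the combinatorial data (which node carries the $3$-branching, what $l^T_k$ records) to the analytic data (which $d_{K_j}$ received the $\partial_t$ hit, which denominator $\Omega_{2k+1,K}$ was produced) at this first level, so that the induction has a clean anchor.

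For the inductive step, I would assume the formula holds for $\CG_d$ and pass to $\CG_{d+1}$ using the recurrence (\ref{eq:recurrenceH^d_K}). The key observation is that the tree-generating rule in the algorithm (each bottom node $K_j$ of $T\in\CG_d$ spawns either a $3$-branching, when $\partial_t$ hits $d_{K_j}$, or a $1$-child otherwise) is in bijection with the terms of (\ref{eq:recurrenceH^d_K}), indexed by $j=1,\dots,2d+3$. When the new $3$-branching is placed at node $j$ on level $d$, the recurrence contributes exactly one new denominator factor $\Omega_{2d+3,K}(K_1,\dots,K_j-K_{j+1}+K_{j+2},\dots)$, a sign $(-1)^j$, and an indicator $1_{K_j\ne K_{j+1},\,K_{j+1}\ne K_{j+2},\,\Omega\ne0}$. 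I would then verify three bookkeeping claims: (i) the new denominator equals $\Omega_{2d+3,K}(S_{T_{\ast_1}},\dots,S_{T_{\ast_{2d+3}}})$ after the relabeling, which is precisely the assertion of (\ref{eq:formulaOmega}) and follows from the already-established fact (proved in the running text of Section \ref{sec:3.1}) that each index becomes the alternating sum over the bottom of the subtree rooted at it; (ii) the new sign is $(-1)^{l^T_d}$ with $l^T_d=j$, so multiplying by the inductive $(-1)^{\sum_{k=0}^{d-1}l^T_k}$ gives $(-1)^{\sum_{k=0}^{d}l^T_k}$; and (iii) the new indicator is $1_{\CA_d(T)}$ under the definitions $S^1_d=S_{T_{\ast'}}$ etc., since $S^1_d\ne S^2_d$ and $S^2_d\ne S^3_d$ translate back into $K_j\ne K_{j+1}$ and $K_{j+1}\ne K_{j+2}$ after passing through the alternating-sum identification. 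The $G$-case is identical except that the single surviving $3$-branching sits at level $1$ rather than level $0$, which is why the denominator carries $\Omega_{5,K}(T)^2$ and the sign/indicator sum runs over the four sets $\CB^{(s)}_0$ from (\ref{eq:defBi}); I would check the base case $G^2$ and note that the recurrence (\ref{eq:recurrenceG^d_K}) is formally identical to (\ref{eq:recurrenceH^d_K}), so the same induction applies verbatim.

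The main obstacle I anticipate is the indicator bookkeeping, not the denominator structure. Two distinct issues must be controlled. First, the alternating-sum identification $S_{T_\ast}$ has to be shown to commute with the relabeling $K_j\to K_j-K_{j+1}+K_{j+2}$ uniformly across all $2d+3$ branching positions; a sign or index shift that is correct for the leftmost branch but off by one elsewhere would break (\ref{eq:formulaOmega}). I would handle this by formulating the alternating-sum identity as a standalone sublemma about subtrees of $\CG_d$ (which the running text already essentially proves for $P=2$) and invoking it position-independently. Second, and more delicate, is verifying that the indicator set $\CA_k(T)$ attached at level $k$ retains its \emph{original} local meaning ($S^1_k\ne S^2_k$, $S^2_k\ne S^3_k$) even after \emph{later} transformations relabel and expand the indices living below level $k$: one must confirm that the conditions $K_j\ne K_{j+1}$ imposed at the moment of the $k$-th transformation are stable under subsequent substitutions, so that $1_{\CA_k(T)}$ is well-defined as a function of the final bottom indices $K_1,\dots,K_{2d+3}$. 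The cleanest route is to express each $\CA_k(T)$ intrinsically in terms of the subtree alternating sums $S^i_k$ from the outset (as the statement does) and check that this intrinsic form is exactly what the recurrence produces at each stage, so that no re-interpretation of earlier indicators is ever needed.
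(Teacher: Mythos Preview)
Your proposal is correct and follows essentially the same route as the paper's own proof: induction on $d$, with the inductive step unwound via the recurrence (\ref{eq:recurrenceH^d_K}) and the bijection between the $2d+3$ branching choices and the trees $T_j\in\CG_{d+1}$ generated from a given $T\in\CG_d$. The three bookkeeping claims you isolate are exactly the equalities the paper labels (\ref{eq:Omega=Omega})--(\ref{eq:1=1'}), and the paper dispatches your anticipated ``main obstacle'' precisely by the intrinsic reformulation you suggest, reducing everything to the single identity $S_{T_{j,\ast_n}}(K_1,\dots,K_{2d+5})=S_{T_{\ast_n}}(K_1,\dots,K_j-K_{j+1}+K_{j+2},\dots,K_{2d+5})$, which it then checks by a three-case analysis on whether the subtree at $\ast_n$ lies left of, right of, or contains the new $3$-branching.
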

\begin{proof} The result of $G$-coefficients can be proved in the same way as the that of $H$-coefficients. So we only consider the proof for $H$-coefficients. 

Note that (1) follows easily from (2), (3). We shall prove (2), (3) by an induction on $d$. Assume that (\ref{eq:formulaH^d}) holds for $1,2,...,d+1$, let's prove the case $d+2$, i.e.

\begin{equation}
   H^{d+2}_{K_1...K_{2d+5}}=\sum_{T\in\CG_{d+1}}\frac{(-1)^{\sum_{k=0}^{d}l^T_{k}}\prod^{d}_{k=0}1_{\CA_{k}}}{\Omega_{3,K}(T)\Omega_{5,K}(T)\cdots\Omega_{2d+3,K}(T)}(K_1,...,K_{2d+5})
\end{equation}

By (\ref{eq:recurrenceH^d_K}), we have 

\begin{equation}\label{eq:formulaH^d'}
\begin{split}
    H^{d+2}_{K_1...K_{2d+5}}&=\sum_{j=1}^{2d+3}\frac{(-1)^j1_{K_j\ne K_{j+1},K_{j+1}\ne K_{j+2},\Omega_{2d+3}(\cdots)\ne0}H^{d+1}_{K_1...(K_j-K_{j+1}+K_{j+2})...K_{2d+5}}}{2\pi\Omega_{2d+3,K}(K_1,...,K_j-K_{j+1}+K_{j+2},...,K_{2d+5})}
\end{split}
\end{equation}

Now let's calculate the sum by substituting the induction assumption into (\ref{eq:formulaH^d'}).

\begin{equation}\label{eq:formulaTerm_j^d+1}
\begin{split}
    H^{d+2}_{K_1...K_{2d+5}}=&\sum_{T\in\CG_d}\frac{(-1)^{\sum_{k=0}^{d-1}l^T_{k}}\prod^{d-1}_{k=0}1_{\CA_{k}}(-1)^j1_{K_j\ne K_{j+1},K_{j+1}\ne K_{j+2},\Omega_{2d+3}(\cdots)\ne0}} {\Omega_{3,K}(T)\Omega_{5,K}(T)\cdots\Omega_{2d+1,K}(T)(K_1,...,K_j-K_{j+1}+K_{j+2},...,K_{2d+5})}\\
    &\times\frac{1}{2\pi\Omega_{2d+3,K}(K_1,...,K_j-K_{j+1}+K_{j+2},...,K_{2d+5})}
\end{split}
\end{equation}

We want to show that the term corresponding to $T$ in above summation equals to 

\begin{equation}\label{eq:formulaTerm_j^d+1'}
    \frac{(-1)^{\sum_{k=0}^{d}l^{T_j}_{k}}\prod^{d}_{k=0}1_{\CA_{k}}}{\Omega_{3,K}(T_j)\Omega_{5,K}(T_j)\cdots\Omega_{2d+3,K}(T_j)}(K_1,...,K_{2d+5}),
\end{equation} 
the term corresponding to $j-$th tree $T_j$ generated from $T$. To do this, we only need to verify following equations.

\begin{equation}\label{eq:Omega=Omega}
    \Omega_{2d+3,K}(T_j)(K_1,...,K_{2d+5})=2\pi\Omega_{2d+3,K}(K_1,...,K_j-K_{j+1}+K_{j+2},...,K_{2d+5}),
\end{equation}

\begin{equation}\label{eq:1=1}
    l_{d+1}^{T_j}=(-1)^j,\ \CA_{d+1}(T_j)=\{K_j\ne K_{j+1},K_{j+1}\ne K_{j+2},\Omega_{2d+3}(\cdots)\ne0\},
\end{equation}

\begin{equation}\label{eq:Omega=Omega'}
    \Omega_{2k+3,K}(T_j)(K_1,...,K_{2d+5})=\Omega_{2k+3,K}(T)(K_1,...,K_j-K_{j+1}+K_{j+2},...,K_{2d+5})\ \ \text{for}\ k\le d,
\end{equation}
and
\begin{equation}\label{eq:1=1'}
    l_{k}^{T_j}=l_k^T,\ 1_{\CA_{k}(T_j)}(K_1,...,K_{2d+5})=1_{\CA_{k}(T)}(K_1,...,K_j-K_{j+1}+K_{j+2},...,K_{2d+5})\ \ \text{for}\ k\le d-1.
\end{equation}

\underline{Proof of (\ref{eq:Omega=Omega}) and (\ref{eq:1=1}):}
When generate next level, for $K_i$ node on the bottom, according to $i<j$, $i>j$ and $i=j$, $K_i$ should have one child labelled by $K_i$, one child labelled by $K_{i+2}$ or three child labelled by $K_j$, $K_{j+1}$, $K_{j+2}$, in the new graph $T_j$.

Thus in $\Omega_{2d+3,K}(T_j)$, all $S_{T_{*_{i}}}=K_i$, $K_{i+2}$ or $K_j-K_{j+1}+K_{j+2}$ according to $i<j$, $i>j$ or $i=j$. Thus by (\ref{eq:formulaOmega}) 
\begin{equation}
    \Omega_{2d+3,K}(T_j)(K_1,...,K_{2d+5})=2\pi\Omega_{2d+3,K}(K_1,...,K_j-K_{j+1}+K_{j+2},...,K_{2d+5})
\end{equation}

Hence, we have proved (\ref{eq:Omega=Omega}). (\ref{eq:1=1}) can be proved similarly.

\underline{Proof of (\ref{eq:Omega=Omega'}) and (\ref{eq:1=1'}):}

Because $\Omega_{2k+3,K}(T)$ and  $1_{\CA_{k}(T)}$ are functions of $S_{T_{*}}$ and by obvious reason $l_{k}^{T_j}=l_k^T$, we only need to verify that 

$$S_{T_{j,*_n}}(K_1,...,K_{2d+5})=S_{T_{*_n}}(K_1,...,K_j-K_{j+1}+K_{j+2},...,K_{2d+5}).$$

Denoted by $*_1,...,*_{2k+3}$ the $2k+3$ nodes on level $k$. One of the subtrees $T_{*_{n_0}}$ has bottom containing $j$-th node $K_j$ on the bottom of $T$. Denote the subtree rooting at $*$ in $T_j$ by $T_{j,*}$. Denote the bottom of $T_{*_n}$ by $\{K_{i_n+1},...,K_{i_{n+1}}\}$ for $0=i_1<i_2<\cdots<i_{2k+4}=2d+5$. 

$\bullet$ For $n<n_0$ the bottom of $T_{j,*_n}$ is also $\{K_{i_n+1},...,K_{i_{n+1}}\}$ and $i_{n+1}<j$. So
\begin{align*}
    &S_{T_{j,*_n}}(K_1,...,K_{2d+5})=K_{i_n+1}-K_{i_n+2}+...+K_{i_{n+1}}\\
    =&S_{T_{*_n}}(K_1,...,K_j-K_{j+1}+K_{j+2},...,K_{2d+5}).
\end{align*}

$\bullet$ For $n>n_0$, the bottom of $T_{j,*_n}$ is $\{K_{i_n+3},...,K_{i_{n+1}+2}\}$ (the label of all nodes is shifted by 2 comparing to that of their parents). So
\begin{align*}
    &S_{T_{j,*_n}}(K_1,...,K_{2d+5})=K_{i_n+3}-K_{i_n+4}+...+K_{i_{n+1}+2}
    \\
    =&S_{T_{*_n}}(K_1,...,K_j-K_{j+1}+K_{j+2},...,K_{2d+5})
\end{align*}

$\bullet$ For $n=n_0$, the bottom of $T_{j,*_n}$ is $\{K_{i_n+1},...,K_j-K_{j+1}+K_{j+2},...,K_{i_{n+1}+2}\}$ (the label of all nodes before $K_j$ is unchanged while the label of nodes after $K_j$ is shifted by 2, comparing to that of their parents). So
\begin{align*}
    &S_{T_{*_{n_0}}}(K_1,...,K_j-K_{j+1}+K_{j+2},...,K_{2d+5})=K_{i_{n_0}+1}-...+(K_j-K_{j+1}+K_{j+2})+...+K_{i_{n_0+1}}\\
    &=K_{i_{n_0}+1}-...+K_j-K_{j+1}+K_{j+2}+...+K_{i_{n_0+1}}\\
    &=S_{T_{j,*_{n_0}}}(K_1,K_2,...,K_{2d+5})
\end{align*}

This completes the induction step.
\end{proof}

\subsection{A Vanishing Property of Normal Form Coefficients} In this section, we describe a vanishing property of the coefficients of normal form transformation, which states that $H_{K_1...K_{2d+3}}^{d+1}$ vanishes outside of a lower dimensional subvariety of the resonance surface $\mathscr{R}=\{(K_1,...,K_{2d+3}):\mathcal{S}_{K}(K_1,...,K_{2d+3})=\Omega_{K}(K_1,...,K_{2d+3})=0\}$. 

\begin{theorem}\label{MainTheorem1}
For nonlinear Schrodinger equation (\ref{1NLS}), the coefficients $H_{K_1...K_{2d+3}}^{d+1}$ of its normal form vanishes if $(K_1,...,K_{2d+3})\in\mathscr{R}\bigcap\left(\bigcap_{T\in\CG_{d}}\bigcap_{k=0}^{d-1}\CA_{k}(T)\right)$ for any $d\le P$. Here $\CA_{k}(T)$ is defined in Lemma \ref{l:formulaH^d} and
\begin{equation}
    \mathscr{R}=\{(K_1,...,K_{2d+3}):\mathcal{S}_{K}(K_1,...,K_{2d+3})=\Omega_{K}(K_1,...,K_{2d+3})=0\}
\end{equation}
\end{theorem}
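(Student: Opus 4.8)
I would prove the vanishing directly, arguing by induction on the number of levels $d$ of the Feynman diagram, which is the same as inducting through the recurrence (\ref{eq:recurrenceH^d_K}). The naive induction — assume $H^{d}$ vanishes on its resonance surface and deduce it for $H^{d+1}$ — fails, and it is worth saying why: the collapse $(K_1,\dots ,K_{2d+3})\mapsto(K_1,\dots ,K_j-K_{j+1}+K_{j+2},\dots ,K_{2d+3})$ appearing in (\ref{eq:recurrenceH^d_K}) preserves the momentum relation $\mathcal S=0$ (the alternating sum is unchanged) but destroys the energy relation, so a point of $\mathscr R$ at level $d$ maps into the momentum shell yet off the resonance surface at level $d-1$. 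Moreover, exactly where the inductive hypothesis would force vanishing, the denominator $\Omega_{2d+1,K}$ in (\ref{eq:recurrenceH^d_K}) degenerates. The resolution is to sharpen the inductive statement to an \emph{identity on the momentum shell}, not a mere vanishing.

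\textbf{The sharpened hypothesis and the base case.} The key algebraic input is the factorization of the cubic resonance on the momentum shell: if $a-b+c=K$ then
\[
\Omega_{3,K}(a,b,c)=a^2-b^2+c^2-K^2=-2(a-K)(c-K).
\]
Applying this to the three denominators in (\ref{eq:initialH}) and using $\mathcal S_{5,K}=0$ to rewrite the linear factors, I would place the three terms of $H^2_{K_1\dots K_5}$ over a common denominator; a direct computation then shows that the combined numerator is proportional to $\Omega_{5,K}$ \emph{modulo the momentum relation}. In other words, on $\{\mathcal S_{5,K}=0\}$ one has $H^2_{K_1\dots K_5}=c\,\Omega_{5,K}/\Pi$, where $\Pi$ is a product of linear forms in $K_1,\dots,K_5,K$, each nonzero on the generic locus cut out by the constraints $\CA_0(T)$. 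This is the shape I would propagate: the inductive hypothesis is that on $\{\mathcal S_{2d+1,K}=0\}$,
\[
H^{d}_{K_1\dots K_{2d+1}}=\frac{c_d\,\Omega_{2d+1,K}(K_1,\dots,K_{2d+1})}{\Pi_d(K_1,\dots,K_{2d+1},K)},
\]
with $\Pi_d$ a product of linear factors. Vanishing on $\mathscr R$ is then immediate, since $\Omega_{2d+1,K}=0$ there and $\Pi_d\neq 0$ on $\bigcap_T\bigcap_k\CA_k(T)$.

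\textbf{Inductive step.} Given the hypothesis for $H^{d}$, I would substitute it into (\ref{eq:recurrenceH^d_K}). Because the collapse keeps each $\mathrm{coll}_j$ on the momentum shell, the hypothesis applies to the collapsed arguments, and the numerator $\Omega_{2d+1,K}(\mathrm{coll}_j)$ supplied by $H^d$ cancels \emph{exactly} against the denominator $\Omega_{2d+1,K}(\mathrm{coll}_j)$ of the recurrence. This is the crucial point that the plain vanishing statement misses: the very factor that obstructed the naive induction is removed. What remains is a sum of reciprocals of products of linear forms,
\[
H^{d+1}_{K_1\dots K_{2d+3}}=\sum_{j=1}^{2d+1}\frac{(-1)^j\,1_{\{\dots\}}\,c_d}{2\pi\,\Pi_d(\mathrm{coll}_j)},
\]
which is structurally the same object as in the base case. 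Combining these terms over a common denominator and invoking the momentum relation should, exactly as for $d=1$, produce a numerator proportional to the next resonance polynomial $\Omega_{2d+3,K}$, re-establishing the hypothesis at order $d+1$ and completing the induction.

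\textbf{Main obstacle.} The genuine difficulty is the recombination step: showing that the sum of the linear-factor reciprocals has numerator divisible by $\Omega_{2d+3,K}$ on the momentum shell, uniformly in $d$. This requires a precise description of the factors $\Pi_d$ and a careful accounting of the signs $(-1)^{l^T_k}$ and the indicator constraints $1_{\CA_k(T)}$ through the algebra. These indicators are not cosmetic: they prevent spurious denominators from vanishing and exclude the large family of degenerate solutions of $\mathcal S=\Omega=0$ that would otherwise spoil the cancellation, yet they also obstruct a purely formal telescoping, since terms that cancel algebraically may carry different constraint sets. The heart of the argument is therefore to verify that on $\mathscr R\cap\big(\bigcap_T\bigcap_k\CA_k(T)\big)$ the graph-structural signs and constraints assemble compatibly with the factorization (\ref{eq:formulaOmega}), so that the combined numerator is exactly the single resonance factor $\Omega_{2d+3,K}$ — the generalization of the base-case identity to arbitrary order.
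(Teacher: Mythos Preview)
Your strategy of strengthening the vanishing to an identity on the momentum shell is right in spirit, and the base-case computation showing $H^2$ is proportional to $\Omega_{5,K}$ on $\{\mathcal S_{5,K}=0\}$ is correct. The cancellation you identify in the inductive step---that the numerator $\Omega_{2d+1,K}(\mathrm{coll}_j)$ supplied by the hypothesis kills the denominator in the recurrence---is also genuine and important.

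However, the proof has a real gap exactly where you flag it: the recombination. Your hypothesis leaves $\Pi_d$ unspecified beyond ``a product of linear factors,'' and this is not enough to close the induction. In fact $\Pi_d$ is \emph{not} the natural guess $l_1\cdots l_{2d}$: already for $d=2$ one finds, on the momentum shell,
\[
H^2=\frac{-\Omega_{5,K}}{4\,l_1l_4(l_1+l_3)(l_2+l_4)},
\]
so $\Pi_2$ involves the sums $l_1+l_3$ and $l_2+l_4$. Under the collapse $\mathrm{coll}_j$ these compound further, and the common denominator of $\sum_j(-1)^j/\Pi_d(\mathrm{coll}_j)$ becomes unwieldy. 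Verifying that its numerator is divisible by $\Omega_{2d+3,K}$ uniformly in $d$ is not routine, and you have not supplied a mechanism for it.

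The paper sidesteps this obstacle by inducting in the \emph{other} direction: it fixes $d$ and inducts on the tree level $k$. The key object is not $H^d$ itself but the partial sum $\Sigma_{k,\mathscr{P}}$ over all trees whose level-$(k{-}1)$ subtree bottoms form a prescribed partition $\mathscr{P}=\{\{K_1,\dots,K_{i_1}\},\dots\}$. Working on the full resonance surface $\mathscr{R}$ (not merely the momentum shell), this has the explicit closed form
\[
\Sigma_{k,\mathscr{P}}=\frac{1}{2^{d-k}\,l_1\cdots \hat l_{i_1}\cdots\hat l_{i_{2k+2}}\cdots l_{2d+2}},
\]
a pure product of the individual differences $l_i=K_i-K_{i+1}$ with the partition boundaries omitted. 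The induction step reduces to the elementary identity expressing $\Omega_{2k+3,K}(T)$ as $2\sum l_il_j$ on $\mathscr{R}$, and the final vanishing follows by summing over $\mathscr{P}\in\mathbf{P}_3$ and using $\sum_{i\text{ odd},\,j\text{ even},\,i<j}l_il_j=\tfrac12\Omega_{2d+3,K}=0$. The essential point is that by restricting to $\mathscr{R}$ throughout and organizing the sum by partition rather than by recurrence depth, the denominators remain factored in the single $l_i$'s and never compound into the mixed linear forms that block your approach.
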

\begin{remark}
Because $\CA_{k}(T)$ is the complement of hypersurfaces of the form $S_{T_*}\ne S_{T_{*'}}$ and $\Omega_{2d+1,K}(T)$, above theorem basically asserts that most of $H_{K_1...K_{2d+3}}^{d+1}$ vanish. i.e. They vanish if $(K_1,\cdots,K_{2d+3})$ does not lie on a lower dimensional subvariety of the resonance surface $\mathscr{R}$.
\end{remark}
\begin{remark}
To see this cancellation in the quintic form, recall that
\begin{align*}
\begin{split}
&- i \partial_t c^1_K = \frac{i\epsilon^4}{2\pi L^{4}} \sum_{\substack{\mathcal{S}_{5,K} =0}} d_{K_1} \overline{d_{K_2}} d_{K_3} \overline{d_{K_4}} d_{K_5} e(\int_0^t\widetilde{\Omega}_{3,K}(s)ds)\Big[\frac{1_{K_1\ne K,K_5\ne K, K_2\ne K_3, K_3\ne K_4}}{  \Omega_{3,K}(K_1,K_2-K_3+K_4,K_5)}\\
&-\frac{1_{K_1\ne K_2, K_2\ne K_3, K_4\ne K_5,K_5\ne K}}{  \Omega_{3,K}(K_1-K_2+K_3,K_4,K_5)}-\frac{1_{K_1\ne K_2,K_1\ne K, K_3\ne K_4, K_4\ne K_5}}{  \Omega_{3,K}(K_1,K_2,K_3-K_4+K_5)}\Big]
\\
&+\frac{\epsilon^4}{L^{4}} \sum_{\substack{\mathcal{S}_{3,K} =0\\ \Omega_{3,K}\ne0}} \frac{1}{2\pi \Omega_{3,K}} d_{K_1} \overline{d_{K_2}} d_{K_3} D_{3,K}(t)e(\int_0^t\widetilde{\Omega}_{3,K}(s)ds).
\end{split}
\end{align*}
Apply this theorem, we have for any $(K_1,...,K_5)\in\{\mathcal{S}_{K}(K_1,...,K_5)=\Omega_{K}(K_1,...,K_5)=0\}$, 

\begin{align*}
&\frac{1}{  \Omega_{3,K}(K_1,K_2-K_3+K_4,K_5)}\\
&-\frac{1}{  \Omega_{3,K}(K_1-K_2+K_3,K_4,K_5)}-\frac{1}{  \Omega_{3,K}(K_1,K_2,K_3-K_4+K_5)}=0.
\end{align*}
\end{remark}

\begin{proof} The proof of this theorem is based on induction on the level of trees in the Feynman diagram. Because $\Omega_{2k+3,K}(T)$ depends only on the alternating sums of the bottom of subtrees of $T$ rooting at level $k$, we use the strategy of summing over all trees whose subtrees have the same bottoms. This sum is easier as $k$ is larger. So we use induction on $k$ to reduce a difficult problem to a easier one.

\textbf{Step 1 (Setting up the Notation):} To initiate the induction, let's first introduce some notation. 

\underline{Chain of Refinements:} We denote by $\textbf{P}_{2k+3}$ the set of all $2k+3-$partitions $\mathscr{P}$ of $\{K_1,...,K_{2d+3}\}$ of the following form 
$$\mathscr{P}=\{\{K_1,...,K_{i_1}\},\{K_{i_1+1},...,K_{i_2}\},\cdots,\{K_{i_{2k+2}+1},...,K_{2d+3}\}\}$$ 
for all $0=i_{0}<i_1<\cdots<i_{2k+2}<i_{2k+3}=2d+3$ and $i_{s}\equiv s\ \text{mod}(2)$.

If $\mathscr{P}\in\textbf{P}_{2k+3}$ and $\mathscr{P}'\in\textbf{P}_{2k+5}$ is produced by dividing a set in $\mathscr{P}$ into three new sets, we say $\mathscr{P}'$ is a refinement of $\mathscr{P}$, denoted by $\mathscr{P}\le\mathscr{P}'$. If $\mathscr{P}_{2j+3}\in\mathbf{P}_{2j+3}$ and $\mathscr{P}_{3}\le\mathscr{P}_{5}\le\cdots\le\mathscr{P}_{2d+3}$, then we call $\mathscr{P}_{3},\mathscr{P}_{5},\cdots,\mathscr{P}_{2d+3}$ a full chain of refinements in $\mathbf{P}$. If $\mathscr{P}_{2k+3}\in\mathbf{P}_{2k+3}$ and $\mathscr{P}_{2k+3}\le\mathscr{P}_{2k+5}\le\cdots\le\mathscr{P}_{2d+3}$, then we call $\mathscr{P}_{2k+3},\mathscr{P}_{2k+5},\cdots,\mathscr{P}_{2d+3}$ a chain of refinements starting at $\mathscr{P}_{2k+3}$. In above definitions $\mathscr{P}_{2d+3}=\{\{K_1\},\cdots,\{K_{2d+3}\}\}$.

\underline{Equivalence of Trees and Full Chains of Refinements:} In what follows, we call the bottom of the subtree with root node at $*$ in $T\in\CG_{d}$ the bottom associated with $*$.

For nodes on level $k$ in $T\in\CG_d$, all their associated bottoms of nodes on level $k$ constitute a partition $\mathscr{P}_{2k+3}$ of $\{K_1,...,K_{2d+3}\}$ in $\mathbf{P}_{2k+3}$. And $\{\mathscr{P}_{2k+3}\}_{k=0}^d$ constitute a full chain of refinements in $\mathbf{P}$. Thus we get a canonical full chain of refinement associated with $T$. On the other hand, given a full chain of refinements $\{\mathscr{P}_{2k+3}\}_{k=0}^d$, we may view all sets in $\mathscr{P}_{2k+3}$ as nodes of level $2k+3$. And if $\mathscr{P}_{2k+5}$ is produced by dividing a set in $\mathscr{P}_{2k+3}$ into three new sets, then we view these three new sets as the children of the old set. We also introduce a virtual root node $R$ and let the three sets in $\mathscr{P}_3$ be children of $R$. So we can also produce a canonical tree associated with a full chain of refinements. Thus these two objects are equivalent.

For a chain of refinements starting at $\mathscr{P}_{2k+3}$, we can also construct a tree $T$. But this tree may not look like trees in a Feynman diagram (the root node $R$ should have $2k+3$ children). A node in $T$ is on level $2j+3$ if its corresponding set is in $\mathscr{P}_{2j+3}$. For any $\mathscr{P}\in \textbf{P}_{2k+3}$ we define 
\begin{equation}
\begin{split}
&\CG_{d,k,\mathscr{P}}=\{\textit{full chain of refinements starting at }\mathscr{P}\}
\\
=&\{\textit{trees } T\textit{constructed from chains of refinement starting at }\mathscr{P}\}
\end{split}
\end{equation}
$\Omega_{2k+3,K}(T)$ is a function of the alternating sums of indices on the bottoms associated with nodes on level $k$. If we view trees as full chain of refinement, these alternative sums are alternating sums of sets in the partition in $\mathbf{P}_{2k+3}$.

\underline{Reformulation of (\ref{eq:formulaH^d}):} A important observation is 
\begin{equation}\label{eq:3.14}
    H^{d+1}_{K_1...K_{2d+3}}=\sum_{\mathscr{P}\in\textbf{P}_{3}}\sum_{T\in\CG_{d,1,\mathscr{P}}} \frac{(-1)^{l^T_{0}+\cdots +l^T_{d-1}}}{\Omega_{3,K}(T)\cdots\Omega_{2d+1,K}(T)}(K_1,...,K_{2d+3})
\end{equation}
This follows easily from (\ref{eq:formulaH^d}), if we are willing to view the sum over all trees as sum over all full chains of refinements, because summing over all chains clearly equals to summing over all chains with and fixed start followed by summing over all possible start.

Define sum over all chains of refinements starting at $\mathscr{P}$ as the following
\begin{equation}
    \Sigma_{k,\mathscr{P}}=\sum_{T\in\CG_{d,k,\mathscr{P}}}\frac{(-1)^{l^T_{k}+\cdots +l^T_{d-1}}}{\Omega_{2k+3,K}(T)\cdots\Omega_{2d+1,K}(T)}(K_1,...,K_{2d+3}).
\end{equation}
Since we have identified a chain of refinements with a tree $T$ and defined the notation of level in $T$, $l^T_{j}$ and $\Omega_{2k+3,K}(T)$ can be defined in the same way as in Lemma \ref{l:formulaH^d}.

So 
\begin{equation}
    H^{d+1}_{K_1...K_{2d+3}}=\sum_{\mathscr{P}\in\textbf{P}_{3}} \Sigma_{1,\mathscr{P}}.
\end{equation}

Basically $\Sigma_{k,\mathscr{P}}$ is what we called sum over all graph with fixed associated bottom of nodes of level $k$. 

Define $l_{1}=K_1-K_2$, $l_{2}=K_2-K_3$, ...,$l_{2d+2}=K_{2d+2}-K_{2d+3}$. 

We use an induction on $k$ to show that 

\begin{equation}\label{formula:induction}
    \Sigma_{k,\mathscr{P}}=\frac{1}{2^{d-k}l_1\cdots l_{i_1-1}\hat{l}_{i_1}l_{i_1+1}\cdots \hat{l}_{i_2}\cdots \hat{l}_{i_{2k+2}}\cdots l_{2d+2}}
\end{equation}

Here $\hat{l}$ indicates the absence of this term. 

We will first show that above conclusion holds when $k=d-1$, then assume the conclusion for $k+1$ and get the conclusion for $k$ 

\textbf{Step 2 (Verifying the Induction for $k=d-1$):} For $\mathscr{P}\in\textbf{P}_{2d+1}$, $\mathscr{P}$ takes the form $$\{\{K_1\},\{K_{2}\},\cdots, \{K_{i},K_{i+1},K_{i+2}\},\cdots,\{K_{2d+3}\}\}.$$

By definition there are only one 
element in $\CG_{d,d-1,\mathscr{P}}$. Thus 
\begin{equation}
    \Sigma_{d-1,\mathscr{P}}=\frac{(-1)^{l^T_{d-1}}}{\Omega_{2d+1,K}(T)(K_1,...,K_{2d+3})}.
\end{equation}

By definition of $l_{j}^T$ in Lemma \ref{l:formulaH^d}, nodes with label $K_{l^T_{d-1}}$ are the only nodes that have children. And its three children are $K_{l^T_{d-1}},K_{l^T_{d-1}+1},K_{l^T_{d-1}+2}$. Thus $l^T_{d-1}=i$.

By definition of $\Omega_{2d+1,K}(T)$

\begin{equation}
\begin{split}
    \Omega_{2d+1,K}(T)(K_1,...,K_{2d+3})&=\Omega_{2d+1,K}(K_1,K_{2},\cdots, K_{i}-K_{i+1}+K_{i+2},\cdots,K_{2d+3})\\
    &=K_1^2-K_2^2+...+(-1)^{i-1}(K_{i}-K_{i+1}+K_{i+2})^2+...+K_{2d+3}^2-K^2\\
    &=(-1)^{i}(K_{i}^2-K_{i+1}^2+K_{i+2}^2-(K_{i}-K_{i+1}+K_{i+2})^2)\\
    &=2(-1)^{i}l_{i}l_{i+1}
\end{split}
\end{equation}

Here in the second equality, we apply the condition $\Omega_{K}(K_1,...,K_{2d+3})=0$. 

Thus

\begin{equation}
    \Sigma_{d-1,\mathscr{P}}=\frac{(-1)^{i}}{2(-1)^{i}l_{i}l_{i+1}}=\frac{1}{2l_{i}l_{i+1}}.
\end{equation}

This is exactly the equation (\ref{formula:induction}) for $k=d-1$.

\textbf{Step 3 (Applying the Induction Hypothesis):} Assume that in $T\in\CG_{d,k,\mathscr{P}}$  is a chain of refinements $\{\mathscr{P},\mathscr{P}_{2k+5},\cdots,\mathscr{P}_{2d+3}\}$ starting at $\mathscr{P}$ with  
$$\mathscr{P}=\{\{K_1,...,K_{i_1}\},\{K_{i_1+1},...,K_{i_2}\},\cdots,\{K_{i_{2k+2}+1},...,K_{2d+3}\}\}.$$ 
Then all possibility of $\mathscr{P}_{2k+5}$ are listed below (depending on which set is divided into three or in other word, which nodes on level $k$ have children)

     (1) For $1\le i<j\le i_1$, $i$ odd, $j$ even,
     \begin{align*}
     \mathscr{P}_{1,ij}&=\{\{K_1,...,K_{i}\},\{K_{i+1},...,K_{j}\},\{K_{j+1},...,K_{i_1}\},\{K_{i_1+1},...,K_{i_2}\},\\
     &\cdots,\{K_{i_{2k+2}+1},...,K_{2d+3}\}\}.
     \end{align*}

     (2) For $i_{1}+1\le i<j\le i_2$, $i$ even, $j$ odd,
     \begin{align*}
     \mathscr{P}_{2,ij}&=\{\{K_{1},...,K_{i_1}\},\{K_{i_1+1},...,K_{i}\},\{K_{i+1},...,K_{j}\},\{K_{j+1},...,K_{i_2}\},\\
     &\cdots,\{K_{i_{2k+2}+1},...,K_{2d+3}\}\}.
     \end{align*}
     $$\cdots$$
     
     (2k+3) For $i_{2k+2}+1\le i<j\le 2k+3$, $i$ odd, $j$ even,
     \begin{align*}
     \mathscr{P}_{2k+3,ij}&=\{\{K_1,...,K_{i_1}\},\{K_{i_1+1},...,K_{i_2}\},\\
     &\cdots,\{K_{i_{2k+2}+1},...,K_{i}\},\{K_{i+1},...,K_{j}\},\{K_{j+1},...,K_{2k+3}\}\}.
     \end{align*}

Denote by $\textbf{Ref}_{\mathscr{P}}$ the set of all $\mathscr{P}_{s,ij}$. If $\mathscr{P}_{2k+5}=\mathscr{P}_{s,ij}$, then we know $K_s$ on level $k$ of $T$ has children. Thus $l^T_{k}=s$

We can split the sum in the following way, (summing over all chains start at $\mathscr{P}$ equals to summing over all chains with second step $\mathscr{P}_{2k+5}$ followed by summing over all possibility of $\mathscr{P}_{2k+5}$)

\begin{equation}
    \sum_{T\in\CG_{d,k,\mathscr{P}}}=\sum_{\mathscr{P}'\in\textbf{Ref}_{\mathscr{P}}}\sum_{T\in\CG_{d,k+1,\mathscr{P}'}}.
\end{equation}

Now notice that $\Omega_{2k+3,K}(T)$ depends only on the alternative sum of the sets in $\mathscr{P}_{2k+5}$ which implies that $\Omega_{2k+3,K}(T)$ is independent of the inner sum. So we have 

\begin{equation}
\begin{split}
    \Sigma_{k,\mathscr{P}}&=\sum_{T\in\CG_{d,k,\mathscr{P}}}\frac{(-1)^{l^T_{k}+\cdots +l^T_{d-1}}}{\Omega_{2k+3,K}(T)\cdots\Omega_{2d+1,K}(T)}(K_1,...,K_{2d+3})\\
    &=\sum_{\mathscr{P}'\in\textbf{Ref}_{\mathscr{P}}}\frac{(-1)^{l^{T}_{k}}}{\Omega_{2k+3,K}(T)}\sum_{T\in\CG_{d,k+1,\mathscr{P}'}}\frac{(-1)^{l^T_{k+1}+\cdots +l^T_{d-1}}}{\Omega_{2k+5,K}(T)\cdots\Omega_{2d+1,K}(T)}(K_1,...,K_{2d+3})
\end{split}
\end{equation}

Since $\textbf{Ref}_{\mathscr{P}}$ equals to the set of all $\{\mathscr{P}_{s,ij}\}$, we get 

\begin{equation}
\begin{split}
    \Sigma_{k,\mathscr{P}}&=\sum_{T\in\CG_{d,k,\mathscr{P}}}\frac{(-1)^{l^T_{k}+\cdots +l^T_{d-1}}}{\Omega_{2k+3,K}(T)\cdots\Omega_{2d+1,K}(T)}(K_1,...,K_{2d+3})
    \\
    &=\frac{1}{\Omega_{2k+3,K}(T)}\Bigg(\sum_{\substack{1\le s\le 2k+3\\\text{$s$ odd}}}(-1)^{s}\sum_{\substack{i_{s-1}+1\le i<j\le i_{s}\\\text{$i$ odd $j$ even}}}\sum_{T\in\CG_{d,k+1,\mathscr{P}_{2k+3,ij}}}\\
    &\frac{(-1)^{l^T_{k+1}+\cdots +l^T_{d-1}}}{\Omega_{2k+5,K}(T)\cdots\Omega_{2d+1,K}(T)}(K_1,...,K_{2d+3})
    \\
    &+\sum_{\substack{1\le s\le 2k+3\\\text{$s$ even}}}(-1)^{s}\sum_{\substack{i_{s-1}+1\le i<j\le i_{s}\\\text{$i$ even $j$ odd}}}\sum_{T\in\CG_{d,k+1,\mathscr{P}_{2k+3,ij}}}\frac{(-1)^{2^{d-k-1}l^T_{k+1}+\cdots +l^T_{d-1}}}{\Omega_{2k+5,K}(T)\cdots\Omega_{2d+1,K}(T)}(K_1,...,K_{2d+3})\Bigg).
    \end{split}
\end{equation}
Apply the induction hypothesis (\ref{formula:induction}),

\begin{equation}\label{eq:lastone}
\begin{split}
    \Sigma_{k,\mathscr{P}} &=\frac{1}{\Omega_{2k+3,K}(T)}\Bigg(\sum_{\substack{1\le s\le 2k+3\\\text{$s$ odd}}}\sum_{\substack{i_{s-1}+1\le i<j\le i_{s}\\\text{$i$ odd $j$ even}}} \frac{1}{2^{d-k-1}l_1\cdots \hat{l}_{i_1}\cdots \hat{l}_{i_{s-1}}\cdots \hat{l}_{i}\cdots \hat{l}_{j}\cdots \hat{l}_{i_{s}}\cdots l_{2d+2}}\\
    &-\sum_{\substack{1\le s\le 2k+3\\\text{$s$ odd}}}\sum_{\substack{i_{s-1}+1\le i<j\le i_{s}\\\text{$i$ even $j$ odd}}} \frac{1}{2^{d-k-1}l_1\cdots \hat{l}_{i_1}\cdots \hat{l}_{i_{s-1}}\cdots \hat{l}_{i}\cdots \hat{l}_{j}\cdots \hat{l}_{i_{s}}\cdots l_{2d+2}}\Bigg)
    \\
    &=\frac{1}{\Omega_{2k+3,K}(T)}\left(\sum_{\substack{1\le s\le 2k+3\\\text{$s$ odd}}}\sum_{\substack{i_{s-1}+1\le i<j\le i_{s}\\\text{$i$ odd $j$ even}}} \frac{l_i l_j}{2^{d-k-1}l_1\cdots \hat{l}_{i_1}\cdots l_{2d+2}}-\sum_{\substack{1\le s\le 2k+3\\\text{$s$ even}}}\sum_{\substack{i_{s-1}+1\le i<j\le i_{s}\\\text{$i$ even $j$ odd}}}\cdots\right)
    \\
    &=\frac{1}{2^{d-k-1}\Omega_{2k+3,K}(T)l_1\cdots \hat{l}_{i_1}\cdots l_{2d+2}}\left(\sum_{\substack{1\le s\le 2k+3\\\text{$s$ odd}}}\sum_{\substack{i_{s-1}+1\le i<j\le i_{s}\\\text{$i$ odd $j$ even}}}l_i l_j-\sum_{\substack{1\le s\le 2k+3\\\text{$s$ even}}}\sum_{\substack{i_{s-1}+1\le i<j\le i_{s}\\\text{$i$ even $j$ odd}}}l_i l_j\right)
\end{split}
\end{equation}

By $\Omega_{K}(K_1,...,K_{2d+3})=0$, we have 

\begin{equation}\label{eq:lastone'}
\begin{split}
    \Omega_{2k+3,K}(T)&=(K_1-...+K_{i_1})^2-(K_{i_1+1}-...+K_{i_2})^2+\cdots+(K_{i_{2k+2}}-...+K_{2d+3})^2-K^2\\
    &=\big((K_1-...+K_{i_1})^2-K_1^2+...-K_{i_1})^2\big)-\big((K_{i_1+1}-...+K_{i_2})^2-K_{i_1+1}^2+...-K_{i_2}\big)\\
    &+\cdots+\big((K_{i_{2k+2}}-...+K_{2k+3})^2-K_{i_{2k+2}}^2+...-K_{2d+3}^2\big)\\
    &=2\sum_{\substack{1\le i<j\le i_1\\\text{$i$ odd, $j$ even}}}l_i l_j-
    2\sum_{\substack{i_1+1\le i<j\le i_2\\\text{$i$ even, $j$ odd}}}l_i l_j+
    2\sum_{\substack{i_2+1\le i<j\le i_3\\\text{$i$ odd, $j$ even}}}l_i l_j-\cdots+
    2\sum_{\substack{i_{2k+2}+1\le i<j\le 2d+3\\\text{$i$ even, $j$ odd}}} l_i l_j\\
    &=2\sum_{\substack{1\le s\le 2k+3\\\text{$s$ odd}}}\sum_{\substack{i_{s-1}+1\le i<j\le i_{s}\\\text{$i$ odd $j$ even}}}l_i l_j-2\sum_{\substack{1\le s\le 2k+3\\\text{$s$ even}}}\sum_{\substack{i_{s-1}+1\le i<j\le i_{s}\\\text{$i$ even $j$ odd}}}l_i l_j
\end{split}
\end{equation}

Here in the second equality, we applied the condition $\Omega_{K}(K_1,...,K_{2d+3})=0$.

Substituting (\ref{eq:lastone'}) in (\ref{eq:lastone}), we get 
\begin{equation}
    \Sigma_{k,\mathscr{P}}=\frac{1}{2^{d-k}l_1\cdots \hat{l}_{i_1}\cdots \hat{l}_{i_2}\cdots \hat{l}_{i_{2k+2}}\cdots l_{2d+2}}
\end{equation}

So we finish the proof of (\ref{formula:induction}).

\textbf{Step 4 (Concluding the Proof):} Now we take $k=1$ in (\ref{formula:induction}). There are three nodes on level 0. So their associated bottoms are $\mathscr{P}_{i_1 i_2}=\{\{K_1,...,K_{i_1}\},\{K_{i_1+1},...,K_{i_2}\},\{K_{i_2+1},...,K_{2d+3}\}\}$. So (\ref{formula:induction}) reads

\begin{equation}
    \Sigma_{1,\mathscr{P}}=\frac{1}{2^dl_1\cdots \hat{l}_{i_1}\cdots \hat{l}_{i_2}\cdots l_{2d+2}}
\end{equation}

By (\ref{eq:3.14})

\begin{equation}
\begin{split}
     H^{d+1}_{K_1...K_{2d+3}}&=\sum_{\mathscr{P}\in\textbf{P}_{3}} \Sigma_{1,\mathscr{P}}\\
    &=\sum_{\substack{1\le i_1<i_2\le 2d+3\\\text{$i_1$ odd, $i_2$ even}}} \frac{1}{2^{d}l_1\cdots \hat{l}_{i_1}\cdots \hat{l}_{i_2}\cdots l_{2d+2}}\\
    &=\frac{1}{2^{d}l_1 l_2\cdots l_{2d+1} l_{2d+2}}\sum_{\substack{1\le i_1<i_2\le 2d+3\\\text{$i_1$ odd, $i_2$ even}}}l_{i_1}l_{i_j}\\
    &=0
\end{split}
\end{equation}

The last line follows from 

\begin{equation}
    2\sum_{\substack{i<j\\\text{$i$ odd, $j$ even}}}l_{i}l_{j}=K_{1}^2-K_{2}^2+\cdots+K_{2d+3}^2-(K_{1}-K_{2}+\cdots+K_{2d+3})^2=0.  
\end{equation}
This completes the proof.
\end{proof}

\section{The Long Time Dynamics of 1D Cubic NLS}\label{sec:dynamics}

This section is devoted to the proof of Theorem \ref{th:main}. As an application of the general tools we developed in section \ref{sec:Feynman}, we shall prove a long time dynamics result of (\ref{1NLS}). 

\subsection{Ideas of the Proof}\label{sec:ideaproof} The intuition behind this result is from the analysis of the effect of normal form transformation. As explained in subsection \ref{sec:nftcubic}, normal form transformation has a similar effect as integration by parts in the context of oscillatory integral. 
To quantify the effect of normal form transformation, notice that each time we gain either $L^{-1}$ or $\epsilon^2$ factor when we do normal form. In following explanation, we often abuse $L^+$ and $L^0$, although sometimes their difference is crucial. 
\smallskip
\newline
\underline{Ideas from number theory:} Consider a sum over lattice $\mathbb{Z}^k_{L}$,  
\begin{equation}\label{eq:latticesum}
\sum_{f_1=0,\cdots, f_k=0} a(K_1)\cdots a(K_\ell).
\end{equation}
Let $f_1,\cdots,f_k$ be linear functions in $K_1,\cdots,K_\ell$ and $a(K_1),\cdots, a(K_\ell)$ be rapidly decaying functions, then this sum is of order $L^{\ell-k}$. Because $a_{K_j}$ is rapidly decaying, we think it as a indicator function on interval $1_{[-1,1]}(K_j)$. 
Let $K_j=\frac{k_j}{L}$ for $j=1,\cdots,\ell$, then (\ref{eq:latticesum}) is equal to number of integer solutions of 
\begin{equation}\label{eq:latticesum2}
\{f_1(k_1,\cdots,k_l)=0,\cdots, f_k(k_1,\cdots,k_l)=0, \ \ |k_j|\le L\}.
\end{equation}
Since $f_j$ are linear functions, this is the number of integer points on a $l-k$ linear subspace. For $\ell$ variables with $k$ linear restrictions, each restriction reduces one freedom. We have $l-k$ freedoms and each freedom takes $L$ possible values then there should be in total $L^{l-k}$ possibilities.

Consider $f_1,\cdots,f_k$ being polynomial functions with $2\sum_{j=1}^k deg f_j < l$, number theorists believe that the number of integer points of (\ref{eq:latticesum2}) is $L^{l-\sum_{j=1}^k deg f_j}$. This principle has been verified in may concrete situations, while itself is a major conjecture in number theory, see for example \cite{BR} and \cite{HB}. Intuitively, since $|k_j|\le L$, $|f|\le L^{deg f}$, which means $f$ has $L^{deg f}$ possible values but when we set $f=0$, it reduces a freedom that can take $L^{deg f}$ values, hence it reduces $deg f$ freedoms.

The assumption $2\sum_{j=1}^k deg f_j < l$ is crucial to obtain the equidistribution result. For example consider
\begin{equation}\label{eq:latticecubic}
k_1^3+k_2^3=k_3^3, \ \ \ |k_j|\le L.
\end{equation}
We have a set of trivial solution $k_1=-k_2$ and $k_3=0$, hence the number of solutions is at least $L$, whereas the equidistribution result only gives a constant number of solutions.
However, the number theorists believe that, after remove a lower dimensional subvariety of (\ref{eq:latticesum2}), the equidistribution result should still hold and solutions on this lower dimensional subvariety are the trivial solutions. In above example, the subvariety should be $\{k_3=0\}$. If $k_3\ne 0$, by the special case of the Fermat's last theorem, (\ref{eq:latticecubic}) has no solution. Therefore, we know the number of solutions can be bounded by any positive constant. Typically the number of nontrivial solutions is $L^{l-\sum_{j=1}^k deg f_j}$ and the number of trivial solutions is $L^{\frac{\ell}{2}}$. The total number of solutions is $L^{l-\sum_{j=1}^k deg f_j}+L^{\frac{\ell}{2}}$. When $2\sum_{j=1}^k deg f_j< \ell$, the equidistributed solutions dominates, and this is where the condition, $2\sum_{j=1}^k deg f_j < \ell$, comes from.
\smallskip
\newline
\underline{The time scale:} In \cite{FGH} and \cite{BGHS1}, they considered the NLS equation with dimension $d\ge 2$ where for fixed $K\in\mathbb{Z}^d_L$, the resonance set
\begin{equation}
\left\{ \begin{array}{l}
K_1-K_2+K_3=K, \\
|K_1|^2-|K_2|^2+|K_3|^2=|K|^2,\\
K_1,K_2,K_3\in\mathbb{Z}^d_L.
\end{array} \right.
\end{equation}
has $3d$ variables. In their case the number of trivial solutions can be bounded by $L^{d}$, and the number of nontrivial solutions is of order $L^{2d-2}$ ($2d-2=3d-d-2$, $d$ linear equations, $1$ quadratic equation). When $d\ge 2$, we have $L^{2d-2}\ge L^d$, i.e. the number of nontrivial solutions dominates. Consider $d=2$ with cubic nonlinearity, then the cubic term is 
$$\frac{\epsilon^2}{L^{2d}}\sum_{\substack{\mathcal{S}_{3,K}=0}} a_{K_1}a_{K_2}a_{K_3} e(\Omega_{3,K}t).$$
Separate this into resonance and nonresonance interaction and remove the nonresonance interaction to higher order term by doing normal form. We have the cubic resonance interaction
$$\frac{\epsilon^2}{L^{2d}}\sum_{\substack{\mathcal{S}_{3,K}=0\\\Omega_{3,K}=0}} a_{K_1}a_{K_2}a_{K_3} e(\Omega_{3,K}t).$$ 
This sum is of order $L^{2d-2}$ from the number theory analysis, hence the cubic resonance is of order $\frac{\epsilon^2}{L^{2}}$. For higher order terms, i.e. the ones from substituting the equation of $\partial_t d_K$, we gain $\epsilon^2$ from the cubic nonlinearity. Thus after doing normal form transformation once, one gains either $L^{-2}$ or $\epsilon^2$. Combining this with the assumption $\epsilon^2 L^+\ll 1$, after doing normal form $P$ times, 
$$-i\partial_t a_{K}=\frac{\epsilon^2}{L^2}\left(\frac{1}{L^{2d-2}}\sum_{\substack{\mathcal{S}_{3,K}=0\\\Omega_{3,K}=0}} a_{K_1}a_{K_2}a_{K_3}\right) +O(\frac{\epsilon^4}{L^2})+\cdots+O(\frac{\epsilon^{2P}}{L^2})+\epsilon^{2P+2}.$$
Consider the time of existence up to $\frac{L^2}{\epsilon^2}$, the contribution of $O(\frac{\epsilon^4}{L^2})$ can be bounded by $\int_{0}^t O(\frac{\epsilon^4}{L^2})\le \epsilon^2$. Therefore, when $t\le \frac{L^2}{\epsilon^2}$, the dynamics of $a_K$ can be approximately described by (rescale $t$ by $s=\frac{\epsilon^2}{L^2}t$)
$$-i\partial_s a_{K}=\frac{1}{L^{2d-2}}\sum_{\substack{\mathcal{S}_{3,K}=0\\\Omega_{3,K}=0}} a_{K_1}a_{K_2}a_{K_3},$$
which in fact, after applying the circle method, converges to a integral, see Theorem 5 in \cite{BGHS1}.

In our case, the cubic resonance interaction is
\begin{equation}
\left\{ \begin{array}{l}
K_1-K_2+K_3=K, \\
K_1^2-K_2^2+K_3^2=K^2,\\
K_1,K_2,K_3\in\mathbb{Z}_L.
\end{array} \right.
\end{equation}
 Due to lack of number of variables, the number of trivial solutions dominates, which is of order $L$. Combining with the factor $\frac{\epsilon^2}{L^2}$, our cubic resonance interaction is of order $\frac{\epsilon^2}{L}$, which means the time which we can describe the solution is only up to $\frac{L}{\epsilon^2}$. To go beyond this time scale, we change the phase of the solution by $d_K=a_Ke(\cdots)$ as in subsection \ref{sec:changephase} to remove the cubic resonance interaction and transform the cubic nonlinearity into
$$\frac{\epsilon^2}{L^2}\sum_{\substack{\mathcal{S}_{3,K}=0\\\Omega_{3,K}\ne 0}} d_{K_1}d_{K_2}d_{K_3} e(\int^t_{0} \widetilde{\Omega}_{3,K}),$$
which can be transformed into quintic form by normal form transformation. And the quintic resonance interaction will be our main term that describes the long time dynamics up to time $t<\frac{L^2}{\epsilon^4}$, which is the wave turbulence time scale.
\smallskip
\newline
\underline{Outline of the Proof:} First, we shall further analyze the main term $H^2_K(u)$. In \cite{FGH} and \cite{BGHS1}, they approximate their main term using the circle method. We may also follow their strategy but because of the integrable structure, we have the cancellation property (Theorem \ref{MainTheorem1}) that we proved in section \ref{sec:Feynman}, which states that outside of a submanifold of the resonance surface, the normal form coefficients vanish. Apply this cancellation to $H^2_K(u)$, $H^2_K(u)=0$ if $(K_1,\cdots,K_5)\in A_1\bigcap A_2\bigcap A_3$.  
And this will significantly simplify the approximation of the main term. The first following section is devoted to the simplification of the main term $H^2_K(u)$. This step is a refinement of Theorem \ref{MainTheorem1} for quintic term. Notice that the vanishing property reduces the number of nontrivial solutions for which the normal form coefficient is non-vanishing. So it allows us to get a gain of $L^{-3}$ instead of $L^{-2}$ in $H^d_K(u)$, $d\ge3$. But in our main term, the $H^2_K(u)$, although the number of nontrivial solutions with non-vanishing coefficient is of order $L^{2+}/L$, there are $L^2$ trivial solutions which prevent us from upgrading our time scale to $\frac{L^3}{\epsilon^4}$. 

The next step is to treat all other terms as errors. We apply the explicit formula of the normal form coefficients in Lemma \ref{l:formulaH^d} to reduce the problem into a number theoretical problem (\ref{eq:mainnumbertheoryineq}). Because in (\ref{eq:mainnumbertheoryineq}) there are $1$ linear equation and $d$ quadratic equations, number of nontrivial solutions are expected to be $L^{2d+1-1-2\times d}=L^+$. Therefore, we should be more careful about the potential possibility of $L^{d}$ trivial solutions ($K_1=K_2$, $K_3=K_4$, $\cdots$). We have to keep track inequality constrains in the explicit formulas to eliminate this possibility. A useful tool of getting bound and taking advantage of these constrains is the Lemma \ref{l:numbertheory2}.

Combining all above estimates, we are able to conclude Proposition \ref{boundphase} by boostrap argument, which describes the dynamics of $d_K$ that is $u_K$ with a phase correction $\int_0^t|b_K(s)|^2ds$. To obtain the dynamics of $u_K$, we need a refined estimate of $|d_K(s)|^2$ so that we may replace it by some known quantities. We will do energy estimate on $c_{K}$ which give us an estimate of $|c_K(s)|^2$ and then show that $|c_K|$ differs from $|d_{K}|$ by some lower order error. Finally, we obtain the desired estimate on $\int_0^t|b_K(s)|^2ds$ in terms of $\int_0^t|c_K(s)|^2ds$.  

\subsection{Identification of the Main Term} In this section, we shall calculate the coefficients of $H^2_{K}$ carefully. In following derivation of the dynamics of $d_K$. $H^2_{K}$ will be thought as the main term. All other terms will be errors.   

Recall that 
\begin{align*}
\begin{split}
\frac{\epsilon^{4}}{L^{4}}H_K^2(u)&=\frac{\epsilon^4}{2\pi L^{4}} \sum_{\substack{\mathcal{S}_{5,K} =0\\ \Omega_{5,K}=0}}\Big[\frac{1_{A_2}}{ \Omega_{3,K}(K_1,K_2-K_3+K_4,K_5)}-\frac{1_{A_1}}{  \Omega_{3,K}(K_1-K_2+K_3,K_4,K_5)}\\
&-\frac{1_{A_3}}{  \Omega_{3,K}(K_1,K_2,K_3-K_4+K_5)}\Big] d_{K_1} \overline{d_{K_2}} d_{K_3} \overline{d_{K_4}} d_{K_5} e(\int_0^t\widetilde{\Omega}_{5,K}(s)ds),
\end{split}
\end{align*}
where
$$A_1=\{K_1\ne K_2,K_2\ne K_3,K_4\ne K_5,K_5\ne K\},$$
$$A_2=\{K_1\ne K,K_2\ne K_3,K_3\ne K_4,K_5\ne K\},$$
$$A_3=\{K_1\ne K_2,K_1\ne K,K_3\ne K_4,K_4\ne K_5\}.$$

For simplicity, let $H=H^2_{K_1...K_5}$ and we will fix $K_1$, $K_2$, ..., $K_5$ in most part of this section. We know that

$$H=\frac{1_{l_1\ne0,l_2\ne0,l_1+l_3\ne0,l_4\ne0}}{l_1l_2}+\frac{1_{l_1+l_3\ne0,l_2+l_4\ne0,l_2\ne0,l_3\ne0}}{l_2l_3}+\frac{1_{l_1\ne0,l_2+l_4\ne0,l_3\ne0,l_4\ne0}}{l_3l_4},$$
if we use the change of variables that we introduced in the earlier section 

\begin{equation}
\left\{ \begin{array}{l}
l_1=K_1-K_2\\
l_2=K_2-K_3\\
l_3=K_3-K_4\\
l_4=K_4-K_5
\end{array} \right.
\end{equation}

The resonance surface equation $\Omega_{5,K}=0$ becomes $l_1l_2+l_3l_4+l_1l_4=0$.

By the principle of inclusion and exclusion, we know that 

\begin{equation}
    H=H1_{l_1\ne0,...,l_4\ne0}+\sum_{i=1}^{4}H1_{l_i=0}-\sum_{i,j=1}^{4}H1_{l_i=l_j=0}+\cdots
\end{equation}

From Theorem \ref{MainTheorem1}, we know that the following rational function 
\begin{align*}
\begin{split}
\frac{1}{\Omega_{3,K}(K_1,K_2-K_3+K_4,K_5)}-\frac{1}{\Omega_{3,K}(K_1-K_2+K_3,K_4,K_5)}
&-\frac{1}{\Omega_{3,K}(K_1,K_2,K_3-K_4+K_5)}
\end{split}
\end{align*}
vanishes on the resonance surface $\mathcal{S}_{5,K} =0$, $\Omega_{5,K}=0$, which implies $H1_{l_1\ne0,...,l_4\ne0}=0$

All other terms also vanish, except

(1) $1_{l_1=0}H=\frac{1_{l_2\ne0,l_3\ne0}}{l_2l_3}$. In this case by $l_1=0$, $l_1l_2+l_3l_4+l_1l_4=0$ is equivalent to $l_3l_4=0$. Then by $l_3\ne0$, we have $l_4=0$. Thus in this case $1_{l_1=0}H=1_{l_1=0,l_4=0}H$.

(2) $1_{l_2=0}H=\frac{1_{l_1\ne0,l_3\ne0,l_4\ne0}}{l_3l_4}$. For similar reason as in (1), $1_{l_2=0}H=1_{l_2=0,l_1+l_3=0}H$.

(3) $1_{l_3=0}H=\frac{1_{l_1\ne0,l_2\ne0,l_4\ne0}}{l_1l_2}$. For similar reason as in (1), $1_{l_3=0}H=1_{l_3=0,l_2+l_4=0}H$.

(4) $1_{l_4=0}H=\frac{1_{l_2\ne0,l_3\ne0}}{l_2l_3}$. For the same reason as (1), $1_{l_4=0}H=1_{l_1=0,l_4=0}H$.

(5) $1_{l_1=0,l_4=0}H=\frac{1_{l_2\ne0,l_3\ne0}}{l_2l_3}$.

From (1), (4), (5), we have 
\begin{align*}
&\sum_{\substack{\mathcal{S}_{5,K} =0\\ \Omega_{5,K}=0}}(H^2_{K_1...K_5}1_{l_1=0}+H^2_{K_1...K_5}1_{l_4=0}-H^2_{K_1...K_5}1_{l_1=0,l_4=0}) d_{K_1} \overline{d_{K_2}} d_{K_3} \overline{d_{K_4}} d_{K_5} e(\int_0^t\widetilde{\Omega}_{5,K}(s)ds)
\\
=&(2-1)\sum_{\substack{K_1\ne K\\ K_2\ne K}} \frac{|d_{K_1}|^2}{(K_1-K)}\frac{|d_{K_2}|^2}{(K-K_2)}d_K
=-\Big(\sum_{\substack{K_1\ne K}} \frac{|d_{K_1}|^2}{(K_1-K)}\Big)^2d_K.
\end{align*}

From (2), (3), we have

\begin{align*}
&\sum_{\substack{\mathcal{S}_{5,K} =0\\ \Omega_{5,K}=0}}(H^2_{K_1...K_5}1_{l_2=0}+H^2_{K_1...K_5}1_{l_3=0}) d_{K_1} \overline{d_{K_2}} d_{K_3} \overline{d_{K_4}} d_{K_5} e(\int_0^t\widetilde{\Omega}_{5,K}(s)ds)
\\
=&2\sum_{\substack{K_1\ne K\\ K_2\ne K_1}} \frac{|d_{K_1}|^2}{(K_2-K_1)}\frac{|d_{K_2}|^2}{(K_1-K)}d_K
\\
=&-2\sum_{\substack{K_1\ne K}} \frac{|d_{K_1}|^4}{(K_1-K)^2}d_K+\sum_{\substack{K_1\ne K\\ K_2\ne K_1}} \frac{|d_{K_1}|^2|d_{K_2}|^2}{(K_2-K_1)}\Big(\frac{1}{K_1-K}-\frac{1}{K_2-K}\Big)d_K
\\
=&-3\sum_{\substack{K_1\ne K}} \frac{|d_{K_1}|^4}{(K_1-K)^2}d_K+\Big(\sum_{\substack{K_1\ne K}} \frac{|d_{K_1}|^2}{(K_1-K)}\Big)^2d_K.
\end{align*}

Therefore, 
\begin{align}\label{eq:mainterm}
\frac{\epsilon^{4}}{L^{4}}H_K^2(u)=\frac{3\epsilon^4}{L^4}\sum_{\substack{K_1\ne K}} \frac{|d_{K_1}|^4}{(K_1-K)^2}d_K.
\end{align}

\subsection{Estimate of Errors in Normal Form Transformation}
In this section, we shall obtain estimates of $H^{d}_K$, $d\ge 2$, $F^{d}_K$, $d\ge 1$,  $\widetilde{H}^{P+1}_K$,  $G^{d}_K$, $d\ge 3$, $E^{d}_K$, $d\ge 2$, $\widetilde{G}^{P+1}_K$, $S^{d}_K$ $d\ge 3$. Note that after introducing $d_K$, the equations (\ref{eq:profiled}) of $d_K$ contains two inequalities. All terms produced by normal form transformations inherits inequality constrains from this, which prevent the presence of trivial solutions $K_1=K_2$, $K_2=K_3$, ..., in Lemma \ref{l:numbertheory3}.

\begin{lemma}\label{l:numbertheory1}
Fix $d,P\in\mathbb{N}$, $T\in\CG_{d-1}$ or $\CG_{P}$, $\ell>1$. Let $\{d_{K}\}_{K\in\mathbb{Z}_L}\in X^{\ell}$ i.e. $||d_{K}||_{X^\ell}=\sup_{K\in\mathbb{Z}_L}|\langle K\rangle^\ell d_{K}|<+\infty$. Then for any $K\in\mathbb{Z}_L$, we have

$$\langle K\rangle^\ell\sum_{\substack{\mathcal{S}_{2d+1,K}=0\\\Omega_{2d+1,K}=0}}\frac{\prod^{d-2}_{k=0}1_{\CA_{k}(T)}}{|\Omega_{3,K}(T)\Omega_{5,K}(T)\cdots\Omega_{2d-1,K}(T)|}(K_1,...,K_{2d+1})d_{K_1}\cdots d_{K_{2d+1}}\lesssim L^{2(d-1)+}||d_{K}||_{X^\ell}^{2d+1},$$

$$\langle K\rangle^\ell\sum_{\substack{\mathcal{S}_{2d+1,K}=0\\\Omega_{2d+1,K}\ne0}}\frac{\prod^{d-2}_{k=0}1_{\CA_{k}(T)}}{|\Omega_{3,K}(T)\Omega_{5,K}(T)\cdots\Omega_{2d+1,K}(T)|}(K_1,...,K_{2d+1})d_{K_1}\cdots d_{K_{2d+1}}\lesssim L^{2d+}||d_{K}||_{X^\ell}^{2d+1},$$
and

$$\langle K\rangle^\ell\sum_{\substack{\mathcal{S}_{2P+3,K} =0}}\frac{\prod^{P-1}_{k=0}1_{\CA_{k}(T)}}{|\Omega_{3,K}(T)\Omega_{5,K}(T)\cdots\Omega_{2P+1,K}(T)|}(K_1,...,K_{2P+3})d_{K_1}...d_{K_{2P+3}}\le L^{2(P+1)+}||d_{K}||_{X^\ell}^{2P+1}.$$
\end{lemma}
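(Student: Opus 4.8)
The plan is to turn each of the three sums into a weighted count of lattice points on a resonance-type variety and to apply the heuristic of Section \ref{sec:ideaproof}: with $n$ summation variables in $\mathbb{Z}_L$, a single linear constraint saves one power of $L$ and a single quadratic constraint saves (just under) two. The three target exponents are exactly $L^{(2d+1)-1-2}=L^{2(d-1)}$, $L^{(2d+1)-1}=L^{2d}$ and $L^{(2P+3)-1}=L^{2(P+1)}$, so the whole point is to show that the $\Omega$-denominators cost only $L^{0+}$ and that no trivial solutions survive. Note that the lemma is stated for a fixed tree $T$, so there is no sum over the (super-exponentially many) trees to worry about here.

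\textbf{Step 1 (reduction to a counting problem).} Since $d_K\in X^\ell$, bound $|d_{K_j}|\le \|d_K\|_{X^\ell}\langle K_j\rangle^{-\ell}$ for each $j$, which pulls out the stated power of $\|d_K\|_{X^\ell}$ and reduces every estimate to bounding a sum of $\langle K\rangle^\ell\prod_j\langle K_j\rangle^{-\ell}$ divided by $\prod_k|\Omega_{2k+1,K}(T)|$, restricted to the relevant constraint set. To dispose of the weight $\langle K\rangle^\ell$, note that on $\{\mathcal{S}_{2d+1,K}=0\}$ the momentum $K$ is a signed sum of $K_1,\dots,K_{2d+1}$, so $\langle K\rangle^\ell\lesssim \sum_j\langle K_j\rangle^\ell$; it therefore suffices to bound, for each fixed $j_0$, the sum in which $\langle K_{j_0}\rangle^\ell$ has cancelled the corresponding decay, i.e. in which $2d$ (resp. $2P+2$) of the variables still carry the summable weight $\langle\cdot\rangle^{-\ell}$ with $\ell>1$.

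\textbf{Step 2 (change of variables and the denominators).} Pass to the difference coordinates $l_i=K_i-K_{i+1}$ used in the main-term computation; the linear constraint $\mathcal{S}_{2d+1,K}=0$ then fixes one coordinate, eliminating exactly the undamped variable from Step 1 and leaving the $l_i$ as the free summation variables, each inheriting an effective decay weight. In these coordinates the denominators $\Omega_{2k+1,K}(T)$ are the quadratic forms produced by the recurrence of Lemma \ref{l:formulaH^d} (the same forms computed in the proof of Theorem \ref{MainTheorem1}), and the constraints $\prod_k 1_{\CA_k(T)}$ keep each of them nonzero. The engine of the count is the elementary estimate $\sum_{0\ne l\in\mathbb{Z}_L}\langle c-l\rangle^{-\ell}|l|^{-1}\lesssim L^{1+}$, valid uniformly in $c$, which is only a logarithm worse than the plain decaying sum $\sum_{l}\langle c-l\rangle^{-\ell}\lesssim L$. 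Feeding one such estimate into each free variable gives a factor $L$ per variable and only $L^{0+}$ per $\Omega$-denominator, which yields $L^{2d+}$ in the second estimate and $L^{2(P+1)+}$ in the third. For the first estimate one imposes in addition $\Omega_{2d+1,K}=0$; this quadratic relation (which in the $l_i$ reads $\sum_{i<j}\pm l_i l_j=0$) removes two further powers of $L$ by the quadratic heuristic, while the denominator carries one fewer factor, producing $L^{2(d-1)+}$.

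\textbf{Main obstacle.} The hard part will be the denominator control, because each $\Omega_{2k+1,K}(T)$ is a genuine quadratic form in the $l_i$ rather than a single monomial, so $|\Omega_{2k+1,K}(T)|^{-1}$ can be large through a near-resonance even when no individual $l_i$ is small; one must show that the product of all such reciprocals, summed against the decay, still loses only $L^{0+}$. The role of the constraints $\CA_k(T)$ is twofold: they bound each $\Omega_{2k+1,K}(T)$ away from $0$, and, crucially, they forbid the trivial configurations (many $l_i$ vanishing simultaneously) that would otherwise contribute $L^{d}$ and destroy the estimate, exactly as the condition $2\sum\deg f_j<\#\text{vars}$ does in the number-theoretic discussion of Section \ref{sec:ideaproof}. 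Making this precise is where I would isolate and apply a dedicated lattice-counting lemma, tracking the inequality constraints carefully so that the near-resonant strata are provably negligible.
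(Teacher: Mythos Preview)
Your Step 1 reduction is right and matches the paper, but Step 2 contains a real gap that you yourself flag in the ``Main obstacle'' paragraph and never close. The elementary estimate $\sum_{0\ne l\in\mathbb{Z}_L}\langle c-l\rangle^{-\ell}|l|^{-1}\lesssim L^{1+}$ would suffice if each $|\Omega_{2k+1,K}(T)|$ were a single $|l_i|$, but as you note it is a genuine quadratic form in the $l_i$, and you give no argument controlling the product of $d-1$ such reciprocals. Your heuristic ``a quadratic constraint removes two powers of $L$'' is exactly the kind of statement that fails here unless one can rule out the accumulation of near-resonances and trivial solutions, and you do not do this.

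The paper's mechanism is quite different from what you sketch. Instead of summing in the $l_i$ variables, it performs a \emph{level-set decomposition}: write $\Omega_{2k+1,K}(T)=\mu_k/L^2$ with $\mu_k\in\mathbb{Z}\setminus\{0\}$ (nonvanishing thanks to $\CA_k(T)$), pull out the factor $L^{2(d-1)}/(\mu_1\cdots\mu_{d-1})$, and reduce to bounding the number $\#_{\mu_1,\dots,\mu_{d-1}}$ of lattice points satisfying $\mathcal{S}_{2d+1,K}=0$, $\Omega_{2k+1,K}(T)=\mu_k/L^2$ for $k\le d-1$, and $\Omega_{2d+1,K}=0$. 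The key lemma is then $\#_{\mu_1,\dots,\mu_{d-1}}\lesssim \mu_1^{+}\cdots\mu_{d-1}^{+}$, proved by \emph{induction on the level of the tree $T$}: at each level the branching node has three children with alternating-sum values $S^1,S^2,S^3$ satisfying $S^1-S^2+S^3=\text{(known)}$ and $(S^1)^2-(S^2)^2+(S^3)^2=\text{(known)}$, which factors as $(S^1-S^2)(S^2-S^3)=\text{(known integer)}/L^2$; the inequality constraints in $\CA_k(T)$ force both factors to be nonzero, and the classical divisor bound $d(n)\lesssim n^{+}$ then gives only $|\text{integer}|^{+}$ possibilities. Summing $\sum_{1\le\mu_k\le L^{10d}}\mu_1^{+}\cdots\mu_{d-1}^{+}/(\mu_1\cdots\mu_{d-1})\lesssim L^{+}$ finishes the main range, and the large-$\mu$ tail is trivial. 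This tree-level induction with the divisor bound is the missing idea in your proposal.
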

\begin{proof} We only show how to prove the first inequality. The second and the third ones can be handled similarly. 
It suffices to show that
\begin{equation}\label{eq:lemma2mainreduction}
    \sum_{\substack{\mathcal{S}_{2d+1,K}=0\\\Omega_{2d+1,K}=0}}\frac{\prod^{d-2}_{k=0}1_{\CA_{k}(T)}}{|\Omega_{3,K}(T)\Omega_{5,K}(T)\cdots\Omega_{2d-1,K}(T)|}\langle K_1\rangle^{-\ell}\cdots \langle K_{2d+1}\rangle^{-\ell}\lesssim L^{2d-2+}\langle K\rangle^{-\ell}.
\end{equation}

Since $K_1,...,K_{2d+1}\in\mathbb{Z}_L$, $\Omega_{2k+1,K}(T)=\frac{\mu_{k}}{L^2}$, $\mu_{k}\in\mathbb{Z}$, $k=1,\cdots,d-1$. Split the summation into two parts,

\begin{equation}\label{eq:lemma2main}
\begin{split}
     &\sum_{\substack{\mathcal{S}_{2d+1,K}=0\\\Omega_{2d+1,K}=0}}\frac{\prod^{d-2}_{k=0}1_{\CA_{k}(T)}}{|\Omega_{3,K}(T)\Omega_{5,K}(T)\cdots\Omega_{2d-1,K}(T)|}\langle K_1\rangle^{-\ell}\cdots \langle K_{2d+1}\rangle^{-\ell}
     \\
    =&\sum_{\mu_1\in\mathbb{N}_+}\cdots\sum_{\mu_{d-1}\in\mathbb{N}_+}\frac{L^{2d-2}}{\mu_{1}\cdots\mu_{d-1}}\sum_{\substack{\mathcal{S}_{2d+1,K}=0\\|\Omega_{3,K}(T)|=\frac{\mu_{1}}{L^2},\cdots,|\Omega_{2d-1,K}(T)|=\frac{\mu_{d-1}}{L^2},\Omega_{2d+1,K}=0}} \Big(\prod^{d-2}_{k=0}1_{\CA_{k}(T)}\langle K_1\rangle^{-\ell}\cdots \langle K_{2d+1}\rangle^{-\ell}\Big)
    \\
    =&\sum_{0\le\mu_k\le L^{10d},\forall 1\le k\le d-1}+\sum_{\mu_k\ge L^{10d},\exists k\le d-1}
\end{split}    
\end{equation}

\textbf{Estimate First Sum in (\ref{eq:lemma2main}):} For the first sum, since $K_1-\cdots+K_{2d+1}=K$, we have $\langle K_1\rangle^{-\ell}\cdots \langle K_{2d+1}\rangle^{-\ell}\le \langle K\rangle^{-l}$ (consider largest $K_{j}$ and bound it by $\langle K\rangle^{-\ell}$, and bound all other $\langle K_j\rangle^{-\ell}$ by $1$). Therefore, 

\begin{align}
    &\sum_{0\le\mu_k\le L^{10d},\forall 1\le k\le d-1}\frac{L^{2d-2}}{\mu_{1}\cdots\mu_{d-1}}\sum_{\substack{\mathcal{S}_{2d+1,K}=0\\|\Omega_{3,K}(T)|=\frac{\mu_{1}}{L^2},\cdots,|\Omega_{2d-1,K}(T)|=\frac{\mu_{d-1}}{L^2},\Omega_{2d+1,K}=0}} \Big(\prod^{d-2}_{k=0}1_{\CA_{k}(T)}\langle K_1\rangle^{-\ell}\cdots \langle K_{2d+1}\rangle^{-\ell}\Big)\notag
    \\
    \le& \frac{L^{2d-2}}{\langle K\rangle^{\ell}} \sum_{0\le\mu_k\le L^{10d},\forall 1\le k\le d-1}\frac{1}{\mu_{1}\cdots\mu_{d-1}}\sum_{\substack{\mathcal{S}_{2d+1,K}=0\\|\Omega_{3,K}(T)|=\frac{\mu_{1}}{L^2},\cdots,|\Omega_{2d-1,K}(T)|=\frac{\mu_{d-1}}{L^2},\Omega_{2d+1,K}=0}} \prod^{d-2}_{k=0}1_{\CA_{k}(T)}\notag
    \\
    =& \frac{L^{2d-2}}{\langle K\rangle^{\ell}} \sum_{0\le\mu_k\le L^{10d},\forall 1\le k\le d-1}\frac{1}{\mu_{1}\cdots\mu_{d-1}}\#\{(K_1,...,K_{2d+1})\in\cap_{k=0}^{d-2}\CA_{k}(T):\mathcal{S}_{2d+1,K}=0,|\Omega_{3,K}(T)|=\frac{\mu_{1}}{L^2}\notag
    \\
    &,\cdots,|\Omega_{2d-1,K}(T)|=\frac{\mu_{d-1}}{L^2},\Omega_{2d+1,K}=0\}\notag
    \\
    =& \frac{L^{2d-2}}{\langle K\rangle^{\ell}} \sum_{0\le|\mu_k|\le L^{10d},\forall 1\le k\le d-1}\frac{1}{\mu_{1}\cdots\mu_{d-1}} \#_{\mu_1,...,\mu_{d-1};K}\label{eq:firstsumlemma2}
\end{align}

Here in the last step we introduce the notation 
\begin{equation}\label{eq:mainnumbertheorydef}
\begin{split}
&\#_{\mu_1,...,\mu_{d-1},K}=\#\{(K_1,...,K_{2d+1})\in\cap_{k=0}^{d-2}\CA_{k}(T):\mathcal{S}_{2d+1,K}=0,|\Omega_{3,K}(T)|=\frac{\mu_{1}}{L^2}
    \\
    &,\cdots,|\Omega_{2d-1,K}(T)|=\frac{\mu_{d-1}}{L^2},\Omega_{2d+1,K}=0\}.
\end{split}
\end{equation}
We want to show that 
\begin{equation}\label{eq:mainnumbertheoryineq}
    \#_{\mu_1,...,\mu_{d-1};K}\lesssim \mu_{1}^{+}\cdots\mu_{d-1}^{+}.
\end{equation}

Replacing $K_j$ by $K_j+K$ in the definition of $\#_{\mu_1,...,\mu_{d-1};K}$, 
we get $\#_{\mu_1,...,\mu_{d-1};K}=\#_{\mu_1,...,\mu_{d-1};0}$. Denote $\#_{\mu_1,...,\mu_{d-1}}=\#_{\mu_1,...,\mu_{d-1};0}$. We want to show that

\begin{equation}\label{eq:estimatesolution}
    \#_{\mu_1,...,\mu_{d-1}}\lesssim \mu_{1}^{+}\cdots\mu_{d-1}^{+}.
\end{equation}

To prove this, we need the following three lemmas.

\begin{lemma}\label{l:numbertheory2}
For any $\mu,\ \mu\in \mathbb{Z}$,
\begin{equation}
    \#\{(K_1,K_2,K_3)\in\mathbb{Z}_L^3:K_1\ne K_2,K_2\ne K_3, K_1-K_2+K_3=\frac{\mu}{L^2}, K_1^2-K_2^2+K_3^2=\frac{\mu'}{L^2}\}\lesssim |\mu'-\mu^2|^+.
\end{equation}
\end{lemma}
\begin{proof} Let $K_j=\frac{k_j}{L}$ with $k_j\in\mathbb{Z}$, $j=1,2,3$, we have
\begin{equation}
\begin{split}
    &\#\{(K_1,K_2,K_3)\in\mathbb{Z}_L^3:K_1\ne K_2,K_2\ne K_3, K_1-K_2+K_3=\frac{\mu}{L^2}, K_1^2-K_2^2+K_3^2=\frac{\mu'}{L^2}\}
    \\
    =&\#\{(k_1,k_2,k_3)\in\mathbb{Z}^3:k_1\ne k_2,k_2\ne k_3, k_1-k_2+k_3=\mu, k_1^2-k_2^2+k_3^2=\mu'\}
    \\
    =&\#\{(k_1,k_2,k_3)\in\mathbb{Z}^3:k_1\ne k_2,k_2\ne k_3, k_1-k_2+k_3=\mu, (k_1-k_2)(k_2-k_3)=\mu'-\mu^2\}
    \\
    =&\sum_{ab=|\mu'-\mu^2|} \#\{(k_1,k_2,k_3)\in\mathbb{Z}^3:k_1\ne k_2,k_2\ne k_3, k_1-k_2+k_3=\mu, k_1-k_2=a, k_2-k_3=b\}
    \\
    \le&\sum_{ab=|\mu'-\mu^2|} 1=\#\{(a,b)\in\mathbb{Z}^2:ab=|\mu'-\mu^2|\}
\end{split}
\end{equation}
Now we need the following lemma,
\begin{lemma}\label{lemma3}
For any number $n\in\mathbb{Z}$, the number of its divisors is bounded by $n^+$. 
\end{lemma}
\begin{proof}
We follow proposition 1.15 in \cite{numbertheorybook}. Let $n=p_1^{\alpha_1}\cdots p_r^{\alpha_r}$, then $$\#\{k:k|n\}=(\alpha_1+1)\cdots(\alpha_r+1).$$ 

Notice that $ln(n)=\alpha_1 ln(p_1)+\cdots+\alpha_r ln(p_r)$, so we have  $\alpha_i\le\frac{ln(n)}{ln(2)}$. And we also have $ln(1+\alpha_i)\le \alpha_i$. We will apply the first inequality when $p_i$ is small and the second when $p_i$ is large. Set a threshold $c$ which will be given later. Define $m=\max\{i:ln(p_{i})<c\}$ ($m=0$ if no such $i$). Thus $ln(m)<ln(p_{m})<c$.
\begin{align*}
    ln(\#\{k:k|n\})&=ln(\alpha_1+1)+\cdots+ln(\alpha_r+1)\\
    &=\sum_{k=1}^m ln(\alpha_k+1)+\sum_{k=m}^r ln(\alpha_k+1)\\
    &\le m\cdot ln(\frac{ln(n)}{ln(2)}+1)+\sum_{k=m}^r \alpha_k\\
    &\le e^c ln(\frac{ln(n)}{ln(2)}+1)+\frac{1}{c}\sum_{k=m}^r \alpha_k ln(p_k)\\
    &\le e^c ln(\frac{ln(n)}{ln(2)}+1)+\frac{1}{c}ln(n)
\end{align*}

Here the fourth inequlity is because $ln(m)<c$ and $ln(n)=\alpha_1 ln(p_1)+\cdots+\alpha_r ln(p_r)$.

Take $c=(1-\delta)lnln(n)$. We have 
\begin{align*}
    ln(\#\{k:k|n\})&\le ln^{1-\delta}(n) ln(\frac{ln(n)}{ln(2)}+1)+(1-\delta)^{-1}\frac{ln(n)}{lnln(n)}\lesssim \frac{ln(n)}{lnln(n)}\lesssim n^+.
\end{align*}
\end{proof}

Recall $ab=|\mu'-\mu^2|$, by Lemma \ref{lemma3}, there are $|\mu'-\mu^2|^+$ possible values of $a$ and $b$. Thus $\#\{(a,b)\in\mathbb{Z}^2:ab=|\mu'-\mu^2|\}\le\#\{\text{divisor of } |\mu'-\mu^2|\}^2\le |\mu'-\mu^2|^+$. This completes the proof of Lemma \ref{l:numbertheory2}.
\end{proof}

Given Lemma \ref{l:numbertheory2}, we can prove the following lemma.

\begin{lemma}\label{l:numbertheory3}
Fix $T\in\CG_{d-1}$. If $(K_1,...,K_{2d+1})\in$ $\{(K_1,...,K_{2d+1})\in\cap_{k=0}^{d-2}\CA_{k}(T):\mathcal{S}_{2d+1,0}=0,|\Omega_{3,0}(T)|=\frac{\mu_{1}}{L^2},\cdots,|\Omega_{2d-1,0}(T)|=\frac{\mu_{d-1}}{L^2},\Omega_{2d+1,0}=0\}$, then there are at most $\mu_{1}^{+}\cdots\mu_{d-1}^{+}$ possible values of $S_{T_{*}}$ (see Lemma \ref{l:formulaH^d} for its definition) for any node $*\in T$.
\end{lemma}
\begin{proof}
This lemma is proved by induction on the level of the nodes.

For the three nodes on level $0$, $*_1$, $*_2$, $*_3$, we have $S_{T_{*_1}}-S_{T_{*_2}}+S_{T_{*_3}}=0$ and $S_{T_{*_1}}^2-S_{T_{*_2}}^2+S_{T_{*_3}}^2=\pm\frac{\mu_1}{L^2}$. Then by Lemma \ref{l:numbertheory2}, there are at most $\mu_{1}^{+}\cdots\mu_{d-1}^{+}$ possible values of them. Note that the inequality constrains in $\CA_{1}(T)$ guarantees $S_{T_{*_1}}\ne S_{T_{*_2}}$ and $S_{T_{*_2}}\ne S_{T_{*_3}}$.

Assume that this lemma is true for nodes of level less than $k$, let us consider nodes on $k$-th level.

Let $*_1$,..., $*_{l^T_{k-1}}$,..., $*_{2k+1}$ be all nodes of level $k$. $*_{l^T_{k-1}}$ has three children $*_{l^T_{k-1}}'$, $*_{l^T_{k-1}+1}'$, $*_{l^T_{k-1}+2}'$. $*_j$ has only one child $*_j'$ for $j<l^T_{k-1}$, one child $*_{j+2}'$ for $j>l^T_{k-1}$.

If $j\ne l^T_{k-1}$, $S_{*_j'}=S_{*_j}$ or $S_{*_{j+2}'}=S_{*_j}$ depending on $j<l^T_{k-1}$ or $j>l^T_{k-1}$. Thus by induction assumption, there are at most $\mu_{1}^{+}\cdots\mu_{d-1}^{+}$ possible values of them.

If $j= l^T_{k-1}$, $S_{*_j'}-S_{*_{j+1}'}+S_{*_{j+1}'}=S_{*_j}$ and $S_{*_j'}^2-S_{*_{j+1}'}^2+S_{*_{j+1}'}^2=\pm\frac{\mu_{k}}{L^2}-S_{*_1}^2+\cdots$. By induction assumption, the right hand side of previous equations have only $\mu_{1}^{+}\cdots\mu_{d-1}^{+}$ possible values. By the inequality constrains in $\CA_{k}$, $S_{*_j'}\ne S_{*_{j+1}'}$, $S_{*_{j+1}'}\ne S_{*_{j+2}'}$. Thus by Lemma \ref{l:numbertheory2}, there are at most $\mu_{1}^{+}\cdots\mu_{d-1}^{+}$ possible values of  $S_{*_j'}$, $S_{*_{j+1}'}$, $S_{*_{j+1}'}$. So we complete the induction.
\end{proof}

If $*$ is the $j$-th nodes on the bottom of $T$, then $S_{T_{*}}=K_j$. Thus there are at most $\mu_{1}^{+}\cdots\mu_{d-1}^{+}$ possible values of all $K_j$, $1\le j\le 2d+1$. This fact implies $\#_{\mu_1,...,\mu_{d-1}}\lesssim \mu_{1}^{+}\cdots\mu_{d-1}^{+}$. So from (\ref{eq:firstsumlemma2}), we know that the first sum in (\ref{eq:lemma2main}) can be bounded by

\begin{equation}\label{eq:lemma2firstpart}
\begin{split}
    &\frac{L^{2d-2}}{\langle K\rangle^{\ell}} \sum_{0\le\mu_k\le L^{10d},\forall 1\le k\le d-1}\frac{1}{\mu_{1}\cdots\mu_{d-1}}\mu_{1}^{+}\cdots\mu_{d-1}^{+}
    \\
    \lesssim& L^{2d-2+}\langle K\rangle^{-\ell} 
\end{split}    
\end{equation}

So the first sum can be bounded by the right hand side of (\ref{eq:lemma2mainreduction}).

\textbf{Estimate Second Sum in (\ref{eq:lemma2main}):} Since $\mu_{k}\ge L^{10d}\Leftrightarrow |\Omega_{2k+1,K}|\ge L^{10d-2}$, by a minute of reflection, we know that the second sum equals to 
\begin{equation}
    \sum_{\substack{\mathcal{S}_{2d+1,K}=0\\\Omega_{2d+1,K}=0\\\exists k,\ |\Omega_{2k+1,K}(T)|\ge L^{10d-2} }}\frac{\prod^{d-2}_{k=0}1_{\CA_{k}(T)}}{|\Omega_{3,K}(T)\Omega_{5,K}(T)\cdots\Omega_{2d-1,K}(T)|}\langle K_1\rangle^{-\ell}\cdots \langle K_{2d+1}\rangle^{-\ell}
\end{equation}

By inequality constrains in $\CA_{j}$, $\Omega_{2j+1,K}(T)\ne0$. Thus $|\Omega_{2j+1,K}(T)|\ge \frac{1}{L^2}$ by the fact that $(K_1,...,K_{2d+1})\in\mathbb{Z}_L^{2d+1}$.

So above expression can be bounded by 
\begin{equation}\label{eq:lemma2longequation}
\begin{split}
    &\sum_{\substack{\mathcal{S}_{2d+1,K}=0\\\Omega_{2d+1,K}=0\\\exists k,\ |\Omega_{2k+1,K}(T)|\ge L^{10d-2} }}L^{2d-2} L^{-(10d-2)}\langle K_1\rangle^{-\ell}\cdots \langle K_{2d+1}\rangle^{-\ell}
    \\
    \lesssim& L^{-8d}\sum_{\mathcal{S}_{2d+1,K}=0}\langle K_1\rangle^{-\ell}\cdots \langle K_{2d+1}\rangle^{-\ell}
    \\
    \lesssim& L^{-8d}\sum_{j=1}^{2d+1}\sum_{\substack{\mathcal{S}_{2d+1,K}=0\\|K_j|=\max_{m}|K_m|}} \langle K_1\rangle^{-\ell}\cdots \langle K_{2d+1}\rangle^{-\ell}
    \\
    \lesssim& L^{-8d}\sum_{\substack{\mathcal{S}_{2d+1,K}=0\\|K_1|=\max_{m}|K_m|}} \langle K_1\rangle^{-\ell}\cdots \langle K_{2d+1}\rangle^{-\ell}
    \\
    \lesssim& L^{-8d}\langle K\rangle^{-\ell}\sum_{\substack{\mathcal{S}_{2d+1,K}=0\\|K_1|=\max_{m}|K_m|}} \langle K_2\rangle^{-\ell}\cdots \langle K_{2d+1}\rangle^{-\ell}
    \\
    \lesssim& L^{-8d}\langle K\rangle^{-\ell} \sum_{(K_2,...,K_{2d+1})\in\mathbb{Z}_L^{2d}} \langle K_2\rangle^{-\ell}\cdots \langle K_{2d+1}\rangle^{-\ell}
    \\
    =& L^{-8d}\langle K\rangle^{-\ell} \left(\sum_{K_2\in\mathbb{Z}_L} \langle K_2\rangle^{-\ell}\right)^{2d}
    \\
    \lesssim& L^{-6d}\langle K\rangle^{-\ell}
\end{split}
\end{equation}
Here the third inequality uses the symmetry between $K_j$, $1\le j\le 2d+1$. The last inequality follows from the elementry inequality $\sum_{K_2\in\mathbb{Z}_L} \langle K_2\rangle^{-\ell}\le L$.

So the second sum can also be bounded by the right hand side of (\ref{eq:lemma2mainreduction}). Thus (\ref{eq:lemma2mainreduction}) is proved. We have finished the first conclusion of this lemma. 

\textbf{Proof of Other Conclusions:} The proof of the second conclusion of this lemma is essentially the same. Now let's describe how to prove the third one. As same as (\ref{eq:lemma2main}), we split the sum into two parts. The second part can be bounded by the same strategy as described above. The first sum can be bounded by

\begin{align}
    &\sum_{0\le\mu_k\le L^{10(P+1)},\forall 1\le k\le P}\frac{L^{2P}}{\mu_{1}\cdots\mu_{P}}\sum_{\substack{\mathcal{S}_{2P+3,K}=0\\|\Omega_{3,K}(T)|=\frac{\mu_{1}}{L^2},\cdots,|\Omega_{2P+1,K}(T)|=\frac{\mu_{P}}{L^2}}} \Big(\prod^{P-1}_{k=0}1_{\CA_{k}(T)}\langle K_1\rangle^{-\ell}\cdots \langle K_{2P+3}\rangle^{-\ell}\Big).\notag
\end{align}

Replacing $K_j$ by $K_j+K$, we get

\begin{align}
    &\sum_{0\le\mu_k\le L^{10(P+1)},\forall 1\le k\le P}\frac{L^{2P}}{\mu_{1}\cdots\mu_{P}}\sum_{\substack{\mathcal{S}_{2P+3,0}=0\\|\Omega_{3,0}(T)|=\frac{\mu_{1}}{L^2},\cdots,|\Omega_{2P+1,0}(T)|=\frac{\mu_{P}}{L^2}}} \Big(\prod^{P-1}_{k=0}1_{\CA_{k}(T)}\langle K_1+K\rangle^{-\ell}\cdots \langle K_{2P+3}+K\rangle^{-\ell}\Big).\notag
\end{align}

Note we no longer have the restriction $\Omega_{2d+3,K}=0$. Consider

$$\{(K_1,...,K_{2d+1})\in\cap_{k=0}^{d-2}\CA_{k}(T):\mathcal{S}_{2d+1,0}=0,|\Omega_{3,0}(T)|=\frac{\mu_{1}}{L^2},\cdots,|\Omega_{2d-1,0}(T)|=\frac{\mu_{d-1}}{L^2}\}.$$

A similar induction works as in Lemma \ref{l:numbertheory3}, except for the last step, due to the absence of $\Omega_{2d+3,K}=0$. But we know that for nodes $*_1$, ...,$*_{2P+1}$ on level $P-1$, $S_{*_1}$, ..., $S_{*_1}$ has only $\mu_1^+\cdots\mu_{P}^+$ possible values except for $S_{*_{l^T_{P-1}}}$, $S_{*_j}=K_j$ or $K_{j+2}$ according to $j<l^T_{P-1}$ or $j>l^T_{P-1}$. Thus $K_1$, ..., $K_{l^T_{P-1}}-K_{l^T_{P-1}+1}+K_{l^T_{P-1}+2}$, ..., $K_{2P+3}$ all have only $\mu_1^+\cdots\mu_{P}^+$ possible values.

Let $j_0$ satisfies $|K_{j_0}+K|=\max_{j}|K_j+K|$. If $j_0\notin\{l^T_{P-1},l^T_{P-1}+1,l^T_{P-1}+2\}$, then bound $\langle K_{j_0}\rangle^{-\ell}$ by $\langle K\rangle^{-\ell}$ and other $\langle K_{j}\rangle^{-\ell}$ by $1$, the first sum can be further bounded by
\begin{equation}
\begin{split}
    &\frac{L^{2P}}{\langle K\rangle^{\ell}} \sum_{0\le\mu_k\le L^{10(P+1)},\forall 1\le k\le d-1}\frac{\mu_1^+\cdots\mu_{P}^+}{\mu_{1}\cdots\mu_{d-1}}\sum_{\substack{K_{l^T_{P-1}}-K_{l^T_{P-1}+1}+K_{l^T_{P-1}+2}\in \mathscr{K}}} \langle K_{l^T_{P-1}}+K\rangle^{-\ell}
    \\
    &\times\langle K_{l^T_{P-1}+1}+K\rangle^{-\ell}\langle K_{l^T_{P-1}+2}+K\rangle^{-\ell}
    \\
    =&\frac{L^{2P}}{\langle K\rangle^{\ell}} \sum_{0\le\mu_k\le L^{10(P+1)},\forall 1\le k\le d-1}\frac{\mu_1^+\cdots\mu_{P}^+}{\mu_{1}\cdots\mu_{d-1}}\sum_{\substack{a\in \mathscr{K}}}\sum_{K_{l^T_{P-1}}, K_{l^T_{P-1}+1}\in\mathbb{Z}}\langle K_{l^T_{P-1}}+K\rangle^{-\ell}
    \\
    &\times\langle K_{l^T_{P-1}+1}+K\rangle^{-\ell}\langle a-K_{l^T_{P-1}}+K_{l^T_{P-1}+1}+K\rangle^{-\ell}
    \\
    \lesssim & \frac{L^{2P}}{\langle K\rangle^{\ell}} \sum_{0\le\mu_k\le L^{10(P+1)},\forall 1\le k\le d-1}\frac{\mu_1^+\cdots\mu_{P}^+}{\mu_{1}\cdots\mu_{d-1}}\sum_{K_{l^T_{P-1}}, K_{l^T_{P-1}+1}\in\mathbb{Z}} \langle K_{l^T_{P-1}}+K\rangle^{-\ell}\langle K_{l^T_{P-1}+1}+K\rangle^{-\ell}
    \\
    \lesssim & \frac{L^{2P+}}{\langle K\rangle^{\ell}} (\sum_{K_{l^T_{P-1}}\in\mathbb{Z}} \langle K_{l^T_{P-1}}+K\rangle^{-\ell})^2\lesssim \frac{L^{2P+2+}}{\langle K\rangle^{\ell}},
\end{split}
\end{equation}
where $\mathscr{K}$ is some set with cardinality $\le \mu_1^+\cdots\mu_{P}^+$. The last inequality follows from the elementry inequality $\sum_{K_2\in\mathbb{Z}_L} \langle K_{l^T_{P-1}}+K\rangle^{-\ell}\le L$. 
 
If $j_0\in\{l^T_{P-1},l^T_{P-1}+1,l^T_{P-1}+2\}$, without loss of generality assume that $j_0=l^T_{P-1}+2$, we have $\langle K_{l^T_{P-1}+2}+K\rangle\ge \langle K\rangle$. After bounding all $\langle K_{j}\rangle^{-\ell}$ by $1$, the first sum can be further bounded by
\begin{equation}
\begin{split}
    &L^{2P} \sum_{0\le\mu_k\le L^{10(P+1)},\forall 1\le k\le d-1}\frac{\mu_1^+\cdots\mu_{P}^+}{\mu_{1}\cdots\mu_{d-1}}\sum_{\substack{K_{l^T_{P-1}}-K_{l^T_{P-1}+1}+K_{l^T_{P-1}+2}\in \mathscr{K}\\\langle K_{l^T_{P-1}+2}+K\rangle\ge \langle K\rangle}} \langle K_{l^T_{P-1}}+K\rangle^{-\ell}
    \\
    &\times\langle K_{l^T_{P-1}+1}+K\rangle^{-\ell}\langle K_{l^T_{P-1}+2}+K\rangle^{-\ell}
    \\
    =&\frac{L^{2P}}{\langle K\rangle^{\ell}} \sum_{0\le\mu_k\le L^{10(P+1)},\forall 1\le k\le d-1}\frac{\mu_1^+\cdots\mu_{P}^+}{\mu_{1}\cdots\mu_{d-1}}\sum_{K_{l^T_{P-1}}, K_{l^T_{P-1}+1}\in\mathbb{Z}} \langle K_{l^T_{P-1}}+K\rangle^{-\ell}\langle K_{l^T_{P-1}+1}+K\rangle^{-\ell}
    \\
    \lesssim & \frac{L^{2P}}{\langle K\rangle^{\ell}} \sum_{0\le\mu_k\le L^{10(P+1)},\forall 1\le k\le d-1}\frac{\mu_1^+\cdots\mu_{P}^+}{\mu_{1}\cdots\mu_{d-1}}\sum_{K_{l^T_{P-1}}, K_{l^T_{P-1}+1}\in\mathbb{Z}} \langle K_{l^T_{P-1}}+K\rangle^{-\ell}\langle K_{l^T_{P-1}+1}+K\rangle^{-\ell}
    \\
    \lesssim & \frac{L^{2P+}}{\langle K\rangle^{\ell}} (\sum_{K_{l^T_{P-1}}\in\mathbb{Z}} \langle K_{l^T_{P-1}}+K\rangle^{-\ell})^2\lesssim \frac{L^{2P+2+}}{\langle K\rangle^{\ell}}
\end{split}
\end{equation}
where $\mathscr{K}$ is some set with cardinality $\le \mu_1^+\cdots\mu_{P}^+$.
So we get a bound on the first sum, thus we finished the proof of the last conclusion. This completes the proof of Lemma \ref{l:numbertheory1}.
\end{proof} 

\begin{lemma}\label{boundNH} For any $3\le d\leq P$, $\ell>1$,
$$
\| H^{d}_K (u) \|_{X^\ell_K} \lesssim L^{2(d-1)+} \| u \|_{X^\ell}^{2d+1}~.
$$
\end{lemma}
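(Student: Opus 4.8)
The plan is to reduce the bound on $\| H^{d}_K(u)\|_{X^\ell_K}$ directly to the first conclusion of Lemma \ref{l:numbertheory1}, which is tailored precisely for this purpose. Recall from \eqref{eq:Hd_u} that
\begin{equation*}
H^{d}_{K}(u)=\sum_{\substack{\mathcal{S}_{2d+1,K} =0\\\Omega_{2d+1,K}=0}} H^d_{K_1K_2...K_{2d+1}}d_{K_1}\overline{d_{K_2}}...d_{K_{2d+1}}e\Big(\int_0^tD_{2d+1,K}(s)ds\Big).
\end{equation*}
The phase factor $e(\int_0^t D_{2d+1,K}(s)ds)$ has modulus one, so it can be discarded when estimating the $X^\ell_K$ norm. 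Likewise $|d_{K_j}|=|u_{K_j}|$, so $\|d_K\|_{X^\ell}=\|u\|_{X^\ell}$, and it suffices to control the sum of $|H^d_{K_1\dots K_{2d+1}}|$ weighted by $\langle K\rangle^\ell$ and $|d_{K_1}|\cdots|d_{K_{2d+1}}|$.

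First I would substitute the explicit formula \eqref{eq:formulaH^d} for $H^d_{K_1\dots K_{2d+1}}$, which expresses it as a sum over trees $T\in\CG_{d-1}$,
\begin{equation*}
H^{d}_{K_1...K_{2d+1}}=\sum_{T\in\CG_{d-1}}\frac{(-1)^{\sum_{k=0}^{d-2}l^T_{k}}\prod^{d-2}_{k=0}1_{\CA_{k}(T)}}{\Omega_{3,K}(T)\Omega_{5,K}(T)\cdots\Omega_{2d-1,K}(T)}(K_1,...,K_{2d+1}).
\end{equation*}
By the triangle inequality I can pull the tree-sum outside and bound each term by its absolute value, so that $\langle K\rangle^\ell |H^d_K(u)|$ is dominated by a finite sum (the number of trees in $\CG_{d-1}$ is a constant depending only on $d$) of expressions of exactly the form
\begin{equation*}
\langle K\rangle^\ell\sum_{\substack{\mathcal{S}_{2d+1,K}=0\\\Omega_{2d+1,K}=0}}\frac{\prod^{d-2}_{k=0}1_{\CA_{k}(T)}}{|\Omega_{3,K}(T)\cdots\Omega_{2d-1,K}(T)|}|d_{K_1}|\cdots|d_{K_{2d+1}}|.
\end{equation*}
Each such expression is bounded by $L^{2(d-1)+}\|d_K\|_{X^\ell}^{2d+1}$ by the first inequality of Lemma \ref{l:numbertheory1}. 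Summing over the finitely many trees and using $\|d_K\|_{X^\ell}=\|u\|_{X^\ell}$ yields the claimed bound $\|H^d_K(u)\|_{X^\ell_K}\lesssim L^{2(d-1)+}\|u\|_{X^\ell}^{2d+1}$, with the implicit constant absorbing the number of trees.

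The only point requiring a small amount of care, rather than a genuine obstacle, is checking that the application of Lemma \ref{l:numbertheory1} is legitimate term by term: the lemma is stated for a single fixed $T$, and one must note that discarding signs and replacing $H^d_{K_1\dots K_{2d+1}}$ by the triangle-inequality bound preserves the resonance constraints $\mathcal{S}_{2d+1,K}=\Omega_{2d+1,K}=0$ and the indicator factors $\prod 1_{\CA_k(T)}$ exactly as they appear in Lemma \ref{l:numbertheory1}. Since the heavy number-theoretic work (the divisor bound and the inductive counting of admissible $S_{T_*}$ values) has already been carried out inside Lemma \ref{l:numbertheory1}, this proof is essentially a bookkeeping reduction; no new estimate is needed.
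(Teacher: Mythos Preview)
Your proposal is correct and follows essentially the same approach as the paper: both substitute the tree formula \eqref{eq:formulaH^d}, apply the triangle inequality to pass to a finite sum over $T\in\CG_{d-1}$, and then invoke the first conclusion of Lemma \ref{l:numbertheory1} for each tree. Your additional remarks (that the phase factor has modulus one and that $|d_{K_j}|=|u_{K_j}|$) are helpful clarifications but do not differ from the paper's argument in substance.
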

\begin{proof}
Recall that 
\begin{equation}\label{eq:lemma6,1}
    H^{d}_{K}(u)=\sum_{\substack{\mathcal{S}_{2d+1,K} =0\\\Omega_{2d+1,K}=0}} H^d_{K_1K_2...K_{2d+1}}d_{K_1}\overline{d_{K_2}}...d_{K_{2d+1}}e(\int_0^tD_{2d+1,K}(s)ds),
\end{equation}

By (\ref{eq:formulaH^d}),
\begin{equation}
    H^{d}_{K_1...K_{2d+1}}=\sum_{T\in\CG_{d-1}}\frac{(-1)^{\sum_{k=0}^{d-2}l^T_{k}}\prod^{d-2}_{k=0}1_{\CA_{k}(T)}}{\Omega_{3,K}(T)\Omega_{5,K}(T)\cdots\Omega_{2d-1,K}(T)}(K_1,...,K_{2d+1}).
\end{equation}

By (\ref{eq:lemma6,1}), 
\begin{equation}
\begin{split}
    |H^{d}_{K}(u)|&\le\sum_{\substack{\mathcal{S}_{2d+1,K} =0\\\Omega_{2d+1,K}=0}} |H^d_{K_1K_2...K_{2d+1}}||d_{K_1}\overline{d_{K_2}}...d_{K_{2d+1}}|
    \\
    &\le \sum_{T\in\CG_{d-1}}\sum_{\substack{\mathcal{S}_{2d+1,K} =0\\\Omega_{2d+1,K}=0}}\frac{\prod^{d-2}_{k=0}1_{\CA_{k}(T)}}{|\Omega_{3,K}(T)\Omega_{5,K}(T)\cdots\Omega_{2d-1,K}(T)|}(K_1,...,K_{2d+1})|d_{K_1}\overline{d_{K_2}}...d_{K_{2d+1}}|.
\end{split}
\end{equation}

By the first conclusion in Lemma \ref{l:numbertheory1},
\begin{equation}
\begin{split}
    |H^{d}_{K}(u)|\leq&|\CG_{d-1}|||d_{K}||_{X^\ell_{K}}^{2d+1}\frac{L^{2(d-1)+}}{\langle K\rangle^{\ell}}
    \lesssim ||u||_{X^\ell}^{2d+1}\frac{L^{2(d-1)+}}{\langle K\rangle^{\ell}},
\end{split}
\end{equation}
where $|\CG_{d-1}|$ is the number of trees in $\CG_{d-1}$.
Thus $||H^{d}_{K}(u)||_{X^\ell}\lesssim L^{2(d-1)+}||u||_{X^\ell}^{2d+1}$. This completes the proof. 
\end{proof}

\begin{lemma} \label{boundNF} For any $1\le d\leq P$, $\ell>1$,
$$
\| F^{d}_K (u) \|_{X^\ell_K} \lesssim L^{2d+} \| u \|_{X^\ell}^{2d+1}~.
$$
\end{lemma}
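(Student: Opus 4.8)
The plan is to mirror the proof of Lemma \ref{boundNH}, replacing the first conclusion of Lemma \ref{l:numbertheory1} by its second conclusion and inserting one extra step to dispose of the modified denominator $\widetilde{\Omega}_{2d+1,K}$. First I would recall the definition (\ref{eq:Fd_u}) of $F^d_K(u)$ together with the explicit formula (\ref{eq:formulaH^d}) for $H^d_{K_1\cdots K_{2d+1}}$, and pass to absolute values. Since $\bigl|e\bigl(\int_0^t\Omega_{2d+1,K}(s)\,ds\bigr)\bigr|=1$ and $|d_{K_j}|=|u_{K_j}|$, this gives
\begin{equation*}
|F^d_K(u)|\le\sum_{T\in\CG_{d-1}}\sum_{\substack{\mathcal S_{2d+1,K}=0\\\Omega_{2d+1,K}\ne0}}\frac{\prod_{k=0}^{d-2}1_{\CA_k(T)}}{|\Omega_{3,K}(T)\cdots\Omega_{2d-1,K}(T)|}\,\frac{|d_{K_1}\cdots d_{K_{2d+1}}|}{2\pi|\widetilde{\Omega}_{2d+1,K}|}.
\end{equation*}

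The key extra step is to replace $\widetilde{\Omega}_{2d+1,K}$ by $\Omega_{2d+1,K}$. On the non-resonant set $\Omega_{2d+1,K}\ne0$, since all $K_j\in\mathbb{Z}_L$ we have $\Omega_{2d+1,K}\in L^{-2}\mathbb{Z}$ and hence $|\Omega_{2d+1,K}|\ge L^{-2}$. On the other hand $\widetilde{\Omega}_{2d+1,K}=\Omega_{2d+1,K}-\frac{\epsilon^2}{2\pi L^2}D_{2d+1,K}(t)$, and because $|d_{K_j}|\le\norm{u}_{X^\ell}\langle K_j\rangle^{-\ell}$ the correction obeys $|D_{2d+1,K}(t)|\lesssim\norm{u}_{X^\ell}^2$. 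Thus $\frac{\epsilon^2}{2\pi L^2}|D_{2d+1,K}(t)|\lesssim\epsilon^2\norm{u}_{X^\ell}^2 L^{-2}\le\epsilon^2\norm{u}_{X^\ell}^2\,|\Omega_{2d+1,K}|$, so for $\epsilon^2$ small (which is within the hypotheses of Theorem \ref{th:main}) we obtain $|\widetilde{\Omega}_{2d+1,K}|\ge\tfrac12|\Omega_{2d+1,K}|$, i.e. $|\widetilde{\Omega}_{2d+1,K}|^{-1}\le 2|\Omega_{2d+1,K}|^{-1}$.

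Next I would observe that for $T\in\CG_{d-1}$ the bottom nodes of $T$ are the singletons $K_1,\dots,K_{2d+1}$, so by (\ref{eq:formulaOmega}) the top factor reads $\Omega_{2d+1,K}(T)=2\pi\Omega_{2d+1,K}(K_1,\dots,K_{2d+1})=2\pi\Omega_{2d+1,K}$. Combining this identity with the denominator bound above, the summand is controlled up to an absolute constant by $\frac{\prod_{k=0}^{d-2}1_{\CA_k(T)}}{|\Omega_{3,K}(T)\cdots\Omega_{2d+1,K}(T)|}\,|d_{K_1}\cdots d_{K_{2d+1}}|$, which is exactly the quantity estimated in the second conclusion of Lemma \ref{l:numbertheory1}. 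Applying that estimate to each $T$ and summing over the finitely many $T\in\CG_{d-1}$ yields $\langle K\rangle^\ell|F^d_K(u)|\lesssim|\CG_{d-1}|\,L^{2d+}\norm{u}_{X^\ell}^{2d+1}\lesssim L^{2d+}\norm{u}_{X^\ell}^{2d+1}$, whence $\norm{F^d_K(u)}_{X^\ell_K}\lesssim L^{2d+}\norm{u}_{X^\ell}^{2d+1}$, the claim. (For $d=1$ the $H$-part of the denominator is empty and the argument is unchanged.)

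The main obstacle is the comparison of $\widetilde{\Omega}$ with $\Omega$: one must use both the arithmetic lower bound $|\Omega_{2d+1,K}|\ge L^{-2}$ on the lattice $\mathbb{Z}_L$ and the smallness of $\epsilon^2$ to absorb the phase correction $\frac{\epsilon^2}{2\pi L^2}D_{2d+1,K}$ into the non-resonant denominator. Once this reduction is in place, everything else is a routine adaptation of Lemma \ref{boundNH}, now driven by the second (non-resonant) estimate of Lemma \ref{l:numbertheory1}, which supplies the $L^{2d+}$ growth in place of the $L^{2(d-1)+}$ growth obtained on the resonant set.
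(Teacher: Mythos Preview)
Your proof is correct and follows essentially the same route as the paper: reduce to the explicit formula (\ref{eq:formulaH^d}) and apply the second (non-resonant) conclusion of Lemma \ref{l:numbertheory1} tree by tree. The one difference is your ``key extra step'' comparing $\widetilde{\Omega}_{2d+1,K}$ with $\Omega_{2d+1,K}$: the paper's proof simply writes $F^d_K$ with $\frac{1}{2\pi\Omega_{2d+1,K}}$ in the denominator (consistent with the derivation in (\ref{eq:initialc}) and (\ref{eq:recurrencec^d_K})) and so bypasses this comparison entirely --- the appearance of $\widetilde{\Omega}$ in the displayed definition (\ref{eq:Fd_u}) is a typo. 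Your reduction $|\widetilde{\Omega}|\ge\tfrac12|\Omega|$ is nonetheless valid under the standing smallness assumption $\epsilon^2\norm{u}_{X^\ell}^2\ll1$, so no harm is done; just be aware that this hypothesis is not part of the lemma as stated, whereas the paper's version needs no such assumption.
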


\begin{proof}
Recall that 
\begin{equation}
    F^{d}_{K}(u)=\sum_{\substack{\mathcal{S}_{2d+1,K}=0\\\Omega_{2d+1,K}\ne0}} H^{d}_{K_1K_2...K_{2d+1}}d_{K_1}\overline{d_{K_2}}...d_{K_{2d+1}} \frac{1}{2\pi\Omega_{2d+1,K}} e(\int_0^t\widetilde{\Omega}_{2d+1,K}(s)ds).
\end{equation}

Denote 
\begin{equation}
    F^d_{K_1K_2...K_{2d+1}}=\frac{H^d_{K_1K_2...K_{2d+1}}1_{\Omega_{2d+1,K}\ne0}}{2\pi\Omega_{2d+1,K}(K_1,...,K_{2d+1})},
\end{equation}
then
\begin{equation}\label{notation:F'}
    F^{d}_{K}(u)=\sum_{\substack{\substack{\mathcal{S}_{2d+1,K} =0\\\Omega_{2d+1,K}\ne0}}} F^d_{K_1K_2...K_{2d+1}}b_{K_1}\overline{b_{K_2}}...b_{K_{2d+1}} e(\int_0^t\widetilde{\Omega}_{2d+1,K}(s)ds).
\end{equation}

By Lemma \ref{l:formulaH^d},
\begin{equation}\label{eq:formulaF^d}
    F^{d}_{K_1...K_{2d+1}}=\sum_{T\in\CG_{d-1}}\frac{(-1)^{\sum_{k=0}^{d-2}l^T_{k}}\prod^{d-2}_{k=0}1_{\CA_{k}(T)}}{\Omega_{3,K}(T)\Omega_{5,K}(T)\cdots\Omega_{2d+1,K}(T)}(K_1,...,K_{2d+1}).
\end{equation}

By (\ref{notation:F'}), 
\begin{equation}
\begin{split}
    |F^{d}_{K}(u)|&\le\sum_{\substack{\mathcal{S}_{2d+1,K} =0\\\Omega_{2d+1,K}\ne0}} |F^d_{K_1K_2...K_{2d+1}}||b_{K_1}\overline{b_{K_2}}...b_{K_{2d+1}}|
    \\
    &\le \sum_{T\in\CG_{d-1}}\sum_{\substack{\mathcal{S}_{2d+1,K} =0\\\Omega_{2d+1,K}\ne0}}\frac{\prod^{d-2}_{k=0}1_{\CA_{k}(T)}}{|\Omega_{3,K}(T)\Omega_{5,K}(T)\cdots\Omega_{2d-1,K}(T)|}(K_1,...,K_{2d+1})|d_{K_1}\overline{d_{K_2}}...d_{K_{2d+1}}|.
\end{split}
\end{equation}

By the second conclusion in Lemma \ref{l:numbertheory1}, 
\begin{equation}
\begin{split}
    |F^{d}_{K}(u)|\leq|\CG_{d-1}|||d_{K}||_{X^\ell_{K}}^{2d+1}\frac{L^{2d+}}{\langle K\rangle^{\ell}}
    \lesssim ||u||_{X^\ell}^{2d+1}\frac{L^{2d+}}{\langle K\rangle^{\ell}},
\end{split}
\end{equation}

where $|\CG_{d-1}|$ is the number of trees in $\CG_{d-1}$.
Thus $||F^{d}_{K}(u)||_{X^\ell}\lesssim L^{2(d-1)+}||u||_{X^\ell}^{2d+1}$.
\end{proof}

\begin{lemma}\label{boundNtildeH}
For any $\ell>1$
$$
\| \widetilde{H}^{P+1}_K \|_{X^\ell_K} \lesssim L^{2P+2+} \| u \|_{X^\ell}^{2P+3}~.
$$
\end{lemma}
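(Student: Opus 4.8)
The plan is to follow the proofs of Lemma~\ref{boundNH} and Lemma~\ref{boundNF} essentially line by line, substituting the third conclusion of Lemma~\ref{l:numbertheory1} for the first or second. First I would recall from (\ref{eq:HP_u}) that
\[
\widetilde{H}^{P+1}_{K}(u)=\sum_{\mathcal{S}_{2P+3,K} =0} H^{P+1}_{K_1K_2\cdots K_{2P+3}}\,d_{K_1}\overline{d_{K_2}}\cdots d_{K_{2P+3}}\, e\Big(\int_0^t\widetilde{\Omega}_{2P+3,K}(s)\,ds\Big),
\]
and observe that the oscillatory factor has modulus one while $|d_{K_j}|=|u_{K_j}|$, so that $\norm{d_K}_{X^\ell}=\norm{u}_{X^\ell}$. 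By the triangle inequality it therefore suffices to estimate
\[
\sum_{\mathcal{S}_{2P+3,K}=0}\big|H^{P+1}_{K_1\cdots K_{2P+3}}\big|\,|d_{K_1}|\cdots|d_{K_{2P+3}}|.
\]

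Next I would insert the explicit formula (\ref{eq:formulaH^d}) with $d=P$,
\[
H^{P+1}_{K_1\cdots K_{2P+3}}=\sum_{T\in\CG_P}\frac{(-1)^{\sum_{k=0}^{P-1}l^T_{k}}\prod_{k=0}^{P-1}1_{\CA_k(T)}}{\Omega_{3,K}(T)\Omega_{5,K}(T)\cdots\Omega_{2P+1,K}(T)}(K_1,\dots,K_{2P+3}),
\]
push the absolute value through the finite sum over trees $T\in\CG_P$, and use $|(-1)^{\sum l^T_k}|=1$ together with $|1_{\CA_k(T)}|\le 1$. This reduces the bound, term by term in $T$, to controlling
\[
\langle K\rangle^\ell\sum_{\mathcal{S}_{2P+3,K}=0}\frac{\prod_{k=0}^{P-1}1_{\CA_k(T)}}{|\Omega_{3,K}(T)\cdots\Omega_{2P+1,K}(T)|}\,|d_{K_1}|\cdots|d_{K_{2P+3}}|,
\]
which is exactly the quantity estimated by the third inequality of Lemma~\ref{l:numbertheory1}, giving $L^{2(P+1)+}\norm{u}_{X^\ell}^{2P+3}$ for each tree $T$. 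Since $|\CG_P|$, the number of trees, is a constant depending only on $P$, summing over $T\in\CG_P$ costs only an $O(1)$ factor and yields $\norm{\widetilde{H}^{P+1}_K}_{X^\ell}\lesssim L^{2P+2+}\norm{u}_{X^\ell}^{2P+3}$.

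The one point that requires attention — and the reason this estimate is larger in $L$ by a factor $L^2$ than the resonant estimates of Lemma~\ref{boundNH} — is that the sum defining $\widetilde{H}^{P+1}$ ranges only over the linear constraint $\mathcal{S}_{2P+3,K}=0$, with no resonance restriction $\Omega_{2P+3,K}=0$ imposed. Thus, in contrast to $H^d_K(u)$, there is no quadratic equation to remove a degree of freedom, so one cannot gain the corresponding $L^{-2}$; this is precisely why the third conclusion of Lemma~\ref{l:numbertheory1} carries the exponent $2(P+1)$ rather than the $2P$ of the resonant case. Since no number-theoretic input beyond Lemma~\ref{l:numbertheory1} is required — all the counting has already been carried out there — I expect no genuine obstacle, only the bookkeeping of invoking the correct conclusion and of matching the $2P+3$ factors $|d_{K_j}|=|u_{K_j}|$ to the power $\norm{u}_{X^\ell}^{2P+3}$.
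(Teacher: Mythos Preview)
Your proposal is correct and follows essentially the same approach as the paper: both recall the definition (\ref{eq:HP_u}), insert the explicit formula (\ref{eq:formulaH^d}) for $H^{P+1}$, take absolute values through the finite sum over $T\in\CG_P$, and then invoke the third conclusion of Lemma~\ref{l:numbertheory1} tree by tree. Your added remark explaining why the exponent is $2(P+1)$ rather than $2P$ (absence of the resonance constraint $\Omega_{2P+3,K}=0$) is accurate and matches the structure of that lemma.
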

\begin{proof}
Recall that 
\begin{equation}\label{eq:lemma8,1}
    \widetilde{H}^{P+1}_{K}(u)=\sum_{\substack{\mathcal{S}_{2P+3,K} =0}} H^{P+1}_{K_1K_2...K_{2P+3}}d_{K_1}\overline{d_{K_2}}...d_{K_{2P+3}} e(\int_0^t\widetilde{\Omega}_{2P+3,K}(s)ds),
\end{equation}

By (\ref{eq:formulaH^d}),
\begin{equation}
    H^{P+1}_{K_1...K_{2P+3}}=\sum_{T\in\CG_{P}}\frac{(-1)^{\sum_{k=0}^{P-1}l^T_{k}}\prod^{P-1}_{k=0}1_{\CA_{k}(T)}}{\Omega_{3,K}(T)\Omega_{5,K}(T)\cdots\Omega_{2P+1,K}(T)}(K_1,...,K_{2P+3}).
\end{equation}

By (\ref{eq:lemma8,1}), 
\begin{equation}
\begin{split}
    |\widetilde{H}^{P+1}_{K}(u)|&\le\sum_{\substack{\mathcal{S}_{2P+3,K} =0}} |H^{P+1}_{K_1K_2...K_{2P+3}}||d_{K_1}\overline{d_{K_2}}...d_{K_{2P+3}}|
    \\
    &\le \sum_{T\in\CG_{P}}\sum_{\substack{\mathcal{S}_{2P+3,K} =0}}\frac{\prod^{P-1}_{k=0}1_{\CA_{k}(T)}}{|\Omega_{3,K}(T)\Omega_{5,K}(T)\cdots\Omega_{2P+1,K}(T)|}(K_1,...,K_{2P+3})|d_{K_1}\overline{d_{K_2}}...d_{K_{2P+3}}|.
\end{split}
\end{equation}

By the last conclusion in Lemma \ref{l:numbertheory1}, 
\begin{equation}
\begin{split}
    |\widetilde{H}^{P+1}_{K}(u)|\leq|\CG_{P}|||d_{K}||_{X^\ell_{K}}^{2P+3}\frac{L^{2P+2+}}{\langle K\rangle^{\ell}}
    \lesssim ||u||_{X^\ell}^{2P+3}\frac{L^{2P+2+}}{\langle K\rangle^{\ell}},
\end{split}
\end{equation}
where $|\CG_{P}|$ is the number of trees in $\CG_{P}$.
Thus $||\widetilde{H}^{P+1}_{K}(u)||_{X^\ell}\lesssim L^{2P+2+}||u||_{X^\ell}^{2P+3}$. 
\end{proof}

\begin{lemma}\label{l:numbertheory4}
Fix $d,P\in\mathbb{N}$, $T\in\CG_{d-1}$ or $\CG_{P}$, $\ell>1$, $s=1,2,3,4$. Let $\{d_{K}\}_{K\in\mathbb{Z}_L}\in X^{\ell}$ i.e. $||d_{K}||_{X^\ell}=\sup_{K\in\mathbb{Z}_L}|\langle K\rangle^\ell d_{K}|<+\infty$. Then for any $K\in\mathbb{Z}_L$, we have

$$\langle K\rangle^\ell\sum_{\substack{\mathcal{S}_{2d+1,K}=0\\\Omega_{2d+1,K}=0}}\frac{1_{\CB^{(s)}_{0}(T)}\prod^{d-2}_{k=1}1_{\CB_{k}(T)}}{|\Omega_{5,K}(T)^2\Omega_{7,K}(T)\cdots\Omega_{2d-1,K}(T)|}(K_1,...,K_{2d+1})d_{K_1}\cdots d_{K_{2d+1}}\lesssim L^{2(d-1)+}||d_{K}||_{X^\ell}^{2d+1},$$

$$\langle K\rangle^\ell\sum_{\substack{\mathcal{S}_{2d+1,K}=0\\\Omega_{2d+1,K}\ne0}}\frac{1_{\CB^{(s)}_{0}(T)}\prod^{d-2}_{k=1}1_{\CB_{k}(T)}}{|\Omega_{5,K}(T)^2\Omega_{7,K}(T)\cdots\Omega_{2d+1,K}(T)|}(K_1,...,K_{2d+1})d_{K_1}\cdots d_{K_{2d+1}}\lesssim L^{2d+}||d_{K}||_{X^\ell}^{2d+1},$$
and

$$\langle K\rangle^\ell\sum_{\substack{\mathcal{S}_{2P+3,K} =0}}\frac{1_{\CB^{(s)}_{0}(T)}\prod^{P-1}_{k=1}1_{\CB_{k}(T)}}{|\Omega_{5,K}(T)^2\Omega_{7,K}(T)\cdots\Omega_{2P+1,K}(T)|}(K_1,...,K_{2P+3})d_{K_1}...d_{K_{2P+3}}\le L^{2(P+1)+}||d_{K}||_{X^\ell}^{2P+1}.$$
\end{lemma}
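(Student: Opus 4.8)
The plan is to mirror the proof of Lemma \ref{l:numbertheory1} almost verbatim, using the explicit formula (\ref{eq:formulaG^d}) for the $G$-coefficients in place of (\ref{eq:formulaH^d}). As in that argument, it suffices to prove, for the first (resonant) estimate, the reduction
$$\sum_{\substack{\mathcal{S}_{2d+1,K}=0\\\Omega_{2d+1,K}=0}}\frac{1_{\CB^{(s)}_{0}(T)}\prod^{d-2}_{k=1}1_{\CB_{k}(T)}}{|\Omega_{5,K}(T)^2\Omega_{7,K}(T)\cdots\Omega_{2d-1,K}(T)|}\langle K_1\rangle^{-\ell}\cdots\langle K_{2d+1}\rangle^{-\ell}\lesssim L^{2(d-1)+}\langle K\rangle^{-\ell}.$$
Writing $|\Omega_{2k+1,K}(T)|=\mu_k/L^2$ with $\mu_k\in\mathbb{N}$ for $k=2,\dots,d-1$ (there is now no $\Omega_{3,K}(T)$ factor, so the bookkeeping index starts at $k=2$, and $\Omega_{5,K}(T)$ enters squared), I would split the sum according to whether all $\mu_k\le L^{10d}$ or some $\mu_k>L^{10d}$, exactly as in (\ref{eq:lemma2main}).

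The heart of the matter is again a counting bound of the form $\#_{\mu_2,\dots,\mu_{d-1}}\lesssim\mu_2^+\cdots\mu_{d-1}^+$ on the number of lattice points with prescribed $\Omega$-values, which I would establish by the induction on the level of the tree used in Lemma \ref{l:numbertheory3}. For levels $k\ge 1$ the induction is unchanged: the unique $3$-branching on level $k$ is governed by $\Omega_{2k+3,K}(T)$ together with the inequalities recorded in $\CB_{k}(T)=\CA_{k}(T)$, so Lemma \ref{l:numbertheory2} applies and multiplies the running count of admissible values of $S_{T_*}$ by $\mu_{k+1}^+$. The only genuine difference sits at the top of the tree: in place of the level-$0$ inequality constraint $\CA_{0}(T)$ together with the factor $\Omega_{3,K}(T)$, the $G$-coefficient carries the equality constraint $1_{\CB^{(s)}_{0}(T)}$, which forces three consecutive level-$1$ alternating sums $S^s_1,S^{s+1}_1,S^{s+2}_1$ to coincide (for $s\le 3$), or $S^4_1$ and $S^5_1$ to equal $K$ (for $s=4$). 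This equality collapses degrees of freedom rather than adding them, so after substituting it, the relations $\mathcal{S}_{2d+1,K}=0$ and $|\Omega_{5,K}(T)|=\mu_2/L^2$ reduce to one linear and one quadratic equation among the remaining distinct sums, to which Lemma \ref{l:numbertheory2} again applies and supplies the base bound $\mu_2^+$. Feeding this up the induction yields $\#_{\mu_2,\dots,\mu_{d-1}}\lesssim\mu_2^+\cdots\mu_{d-1}^+$.

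For the assembly, bounding $\langle K_1\rangle^{-\ell}\cdots\langle K_{2d+1}\rangle^{-\ell}\le\langle K\rangle^{-\ell}$ on the small-$\mu$ part and inserting the counting bound gives
$$\frac{L^{2(d-1)}}{\langle K\rangle^{\ell}}\Big(\sum_{\mu_2}\frac{\mu_2^+}{\mu_2^2}\Big)\prod_{k=3}^{d-1}\Big(\sum_{0\le\mu_k\le L^{10d}}\frac{\mu_k^+}{\mu_k}\Big)\lesssim L^{2(d-1)+}\langle K\rangle^{-\ell},$$
where the squared $\Omega_{5,K}(T)$ makes the $\mu_2$-sum convergent and thereby compensates exactly for the absent $\Omega_{3,K}(T)$ factor; the large-$\mu$ tail is estimated verbatim as in (\ref{eq:lemma2longequation}). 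The second estimate (non-resonant, $\Omega_{2d+1,K}\ne0$) is identical with the extra factor $\Omega_{2d+1,K}(T)$ retained in the denominator, and the third estimate (the $\widetilde{G}^{P+1}$ case, with no $\Omega_{2d+3,K}=0$ restriction) follows the final part of the proof of Lemma \ref{l:numbertheory1}, the freedom of the missing quadratic constraint being absorbed by summing the unconstrained branching triple against the two elementary bounds $\sum_{K}\langle K\rangle^{-\ell}\le L$.

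The step I expect to be the main obstacle is precisely the base case of the induction at the top of the tree: one must verify that the equality constraints $\CB^{(s)}_{0}(T)$, inherited from the sets $B_1,\dots,B_4$ of (\ref{eq:defBi}), truly reduce the number of free indices and do not, in conjunction with the absence of the $\Omega_{3,K}(T)$ factor, reintroduce a family of trivial solutions large enough to destroy the $\prod_k\mu_k^+$ bound. This is guaranteed by the inequality constraints retained in $\CB_{k}(T)$ (and the $S^i\ne S^{i+1}$ parts), which keep every $\Omega_{2k+3,K}(T)$ appearing in the denominator nonzero and so preserve the applicability of Lemma \ref{l:numbertheory2} at each stage; checking this compatibility between the equality constraint at the top and the inequality constraints below is where the argument requires the most care.
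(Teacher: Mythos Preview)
Your proposal is correct and follows essentially the same route as the paper's own proof: reduce to the weighted counting estimate, split according to the sizes of the $\mu_k$, and prove the counting bound $\#_{\mu_2,\dots,\mu_{d-1}}\lesssim\mu_2^+\cdots\mu_{d-1}^+$ by induction on the tree level, with the base case at level~$1$ handled by collapsing three of the five sums $S^1_1,\dots,S^5_1$ via the equality constraint in $\CB^{(s)}_0(T)$ and then applying Lemma~\ref{l:numbertheory2} to the surviving triple (the paper carries this out explicitly for $s=1$ in Lemma~\ref{l:numbertheory5} and asserts the other cases are identical). Your remark that the squared $\Omega_{5,K}(T)$ makes the $\mu_2$-sum absolutely convergent, and that this is precisely what replaces the missing $\Omega_{3,K}(T)$ factor in the power count, is exactly the mechanism at work and is in fact slightly cleaner bookkeeping than the paper's display~(\ref{eq:lemma8firstpart}).
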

\begin{proof} Because the idea of this proof is identical to Lemma \ref{l:numbertheory1}, so the proof here will be sketchy. We first prove the first inequality. In the end of the proof, we explain how to prove the second and the third ones similarly. As same as before, it suffices to show that
\begin{equation}\label{eq:lemma8mainreduction}
    \sum_{\substack{\mathcal{S}_{2d+1,K}=0\\\Omega_{2d+1,K}=0}}\frac{1_{\CB^{(s)}_{0}(T)}\prod^{d-2}_{k=1}1_{\CB_{k}(T)}}{|\Omega_{5,K}(T)^2\Omega_{7,K}(T)\cdots\Omega_{2d-1,K}(T)|}\langle K_1\rangle^{-\ell}\cdots \langle K_{2d+1}\rangle^{-\ell}\lesssim L^{2d-2+}\langle K\rangle^{-\ell}.
\end{equation}

Since $K_1,...,K_{2d+1}\in\mathbb{Z}_L$, $\Omega_{2k+1,K}(T)=\frac{\mu_{k}}{L^2}$, $\mu_{k}\in\mathbb{Z}$. The same as before, split the summation into two parts, 

\begin{equation}\label{eq:lemma8main}
\begin{split}
     &\sum_{\substack{\mathcal{S}_{2d+1,K}=0\\\Omega_{2d+1,K}=0}}\frac{1_{\CB^{(s)}_{0}(T)}\prod^{d-2}_{k=1}1_{\CB_{k}(T)}}{|\Omega_{5,K}(T)^2\Omega_{7,K}(T)\cdots\Omega_{2d-1,K}(T)|}\langle K_1\rangle^{-\ell}\cdots \langle K_{2d+1}\rangle^{-\ell}
     \\
    =&\sum_{\mu_2\in\mathbb{N}_+}\cdots\sum_{\mu_{d-1}\in\mathbb{N}_+}\frac{L^{2d-2}}{\mu_{2}^2\cdots\mu_{d-1}}\sum_{\substack{\mathcal{S}_{2d+1,K}=0\\|\Omega_{5,K}(T)|=\frac{\mu_{2}}{L^2},\cdots,|\Omega_{2d-1,K}(T)|=\frac{\mu_{d-1}}{L^2},\Omega_{2d+1,K}=0}} \Big(1_{\CB^{(s)}_{0}(T)}\prod^{d-2}_{k=1}1_{\CB_{k}(T)}\langle K_1\rangle^{-\ell}\cdots \Big)
    \\
    =&\sum_{0\le\mu_k\le L^{10d},\forall 2\le k\le d-1}+\sum_{\mu_k\ge L^{10d},\exists k\le d-1}
\end{split}    
\end{equation}

\textbf{Estimate First Sum in (\ref{eq:lemma8main}):} For the first sum, using the same argument as before, since $K_1-\cdots+K_{2d+1}=K$, we have $\langle K_1\rangle^{-\ell}\cdots \langle K_{2d+1}\rangle^{-\ell}\le \langle K\rangle^{-\ell}$. (Simply consider largest $K_{j}$ and bound it by $\langle K\rangle^{-\ell}$. Bound all other $\langle K_j\rangle^{-\ell}$ by $1$) So we have,

\begin{align}
    &\sum_{0\le\mu_k\le L^{10d},\forall 2\le k\le d-1}\frac{L^{2d-2}}{\mu_{2}^2\cdots\mu_{d-1}}\sum_{\substack{\mathcal{S}_{2d+1,K}=0\\|\Omega_{5,K}(T)|=\frac{\mu_{2}}{L^2},\cdots,|\Omega_{2d-1,K}(T)|=\frac{\mu_{d-1}}{L^2},\Omega_{2d+1,K}=0}} \Big(1_{\CB^{(s)}_{0}(T)}\prod^{d-2}_{k=0}1_{\CB_{k}(T)}\langle K_1\rangle^{-\ell}\cdots \Big)\notag
    \\
    \le& \frac{L^{2d-2}}{\langle K\rangle^{\ell}} \sum_{0\le\mu_k\le L^{10d},\forall 2\le k\le d-1}\frac{1}{\mu_{2}^2\cdots\mu_{d-1}}\sum_{\substack{\mathcal{S}_{2d+1,K}=0\\|\Omega_{5,K}(T)|=\frac{\mu_{2}}{L^2},\cdots,|\Omega_{2d-1,K}(T)|=\frac{\mu_{d-1}}{L^2},\Omega_{2d+1,K}=0}} 1_{\CB^{(s)}_{0}(T)}\prod^{d-2}_{k=0}1_{\CB_{k}(T)}\notag
    \\
    =& \frac{L^{2d-2}}{\langle K\rangle^{\ell}} \sum_{0\le\mu_k\le L^{10d},\forall 2\le k\le d-1}\frac{1}{\mu_{2}^2\cdots\mu_{d-1}}\#\{(K_1,...,K_{2d+1})\in\cap_{k=0}^{d-2}\CB_{k}(T)\cap\CB^{(s)}_{0}(T):\mathcal{S}_{2d+1,K}=0,\notag
    \\
    &|\Omega_{5,K}(T)|=\frac{\mu_{2}}{L^2},\cdots,|\Omega_{2d-1,K}(T)|=\frac{\mu_{d-1}}{L^2},\Omega_{2d+1,K}=0\}\notag
    \\
    =& \frac{L^{2d-2}}{\langle K\rangle^{\ell}} \sum_{0\le|\mu_k|\le L^{10d},\forall 2\le k\le d-1}\frac{1}{\mu_{2}\cdots\mu_{d-1}} \#_{\mu_2,...,\mu_{d-1};K}\label{eq:firstsumlemma8}
\end{align}

Here in the last step we introduce the notation $\#_{\mu_2,...,\mu_{d-1},K}$. We want to show that
\begin{equation}
    \#_{\mu_2,...,\mu_{d-1};K}\lesssim \mu_{2}^{+}\cdots\mu_{d-1}^{+}.
\end{equation}

Replacing $K_j$ by $K_j+K$ in the definition of $\#_{\mu_2,...,\mu_{d-1};K}$, 
we get $\#_{\mu_1,...,\mu_{d-1};K}=\#_{\mu_1,...,\mu_{d-1};0}$. Denote $\#_{\mu_2,...,\mu_{d-1}}=\#_{\mu_2,...,\mu_{d-1};0}$. We want to show that

\begin{equation}\label{eq:estimatesolution'}
    \#_{\mu_2,...,\mu_{d-1}}\lesssim \mu_{2}^{+}\cdots\mu_{d-1}^{+}.
\end{equation}

We prove (\ref{eq:estimatesolution'}) inductively.

\begin{lemma}\label{l:numbertheory5}
Fix $T\in\CG_{d-1}$. If $(K_1,...,K_{2d+1})\in$ $\{(K_1,...,K_{2d+1})\in\cap_{k=0}^{d-2}\CB_{k}(T)\cap\CB^{(s)}_{0}(T):\mathcal{S}_{2d+1,0}=0,|\Omega_{5,0}(T)|=\frac{\mu_{2}}{L^2},\cdots,|\Omega_{2d-1,0}(T)|=\frac{\mu_{d-1}}{L^2},\Omega_{2d+1,0}=0\}$, then there are at most $\mu_{2}^{+}\cdots\mu_{d-1}^{+}$ possible values of $S_{T_{*}}$ (see Lemma \ref{l:formulaH^d} for its definition) for any node $*\in T$.
\end{lemma}
\begin{proof}
This lemma is proved by induction on the level of the nodes. In the proof, we only consider the case that $s=1$. In all other cases the proof has no essential changes. 

Unlike the proof of Lemma \ref{l:numbertheory3}, we don't have $\Omega$ on level $0$. For the five nodes on level $1$, $*_1$, $*_2$, $*_3$, $*_4$, $*_5$, we have $S_{T_{*_1}}-S_{T_{*_2}}+S_{T_{*_3}}-S_{T_{*_4}}+S_{T_{*_5}}=0$ and $S_{T_{*_1}}^2-S_{T_{*_2}}^2+S_{T_{*_3}}^2-S_{T_{*_4}}^2+S_{T_{*_5}}^2=\pm\frac{\mu_2}{L^2}$. Unlike the proof of Lemma \ref{l:numbertheory3}, we have several equations in $\CB^{(s)}_{0}(T)$, i.e. $S_{T_{*_1}}=S_{T_{*_2}}=S_{T_{*_3}}$. 

Applying them, we get $S_{T_{*_3}}-S_{T_{*_4}}+S_{T_{*_5}}=0$ and $S_{T_{*_3}}^2-S_{T_{*_4}}^2+S_{T_{*_5}}^2=\pm\frac{\mu_2}{L^2}$. Then by Lemma \ref{l:numbertheory2}, there are at most $\mu_{1}^{+}\cdots\mu_{d-1}^{+}$ possible values of $S_{T_{*_1}}$, $S_{T_{*_2}}$, $S_{T_{*_3}}$, $S_{T_{*_4}}$, $S_{T_{*_5}}=0$. Note that the inequality constrains in $\CB_{1}(T)$ guarantees $S_{T_{*_3}}\ne S_{T_{*_4}}$ and $S_{T_{*_4}}\ne S_{T_{*_5}}$.

Assume that this lemma is true for nodes of level less than $k$, let us consider nodes on $k$-th level. The following argument is identically the same as that in the proof of Lemma \ref{l:numbertheory3}.

Let $*_1$,..., $*_{l^T_{k-1}}$,..., $*_{2k+1}$ be all nodes of level $k$. $*_{l^T_{k-1}}$ has three children $*_{l^T_{k-1}}'$, $*_{l^T_{k-1}+1}'$, $*_{l^T_{k-1}+2}'$. $*_j$ has only one child $*_j'$ for $j<l^T_{k-1}$, one child $*_{j+2}'$ for $j>l^T_{k-1}$.

If $j\ne l^T_{k-1}$, $S_{*_j'}=S_{*_j}$ or $S_{*_{j+2}'}=S_{*_j}$ depending on $j<l^T_{k-1}$ or $j>l^T_{k-1}$. Thus by induction assumption, there are at most $\mu_{1}^{+}\cdots\mu_{d-1}^{+}$ possible values of them.

If $j= l^T_{k-1}$, $S_{*_j'}-S_{*_{j+1}'}+S_{*_{j+1}'}=S_{*_j}$ and $S_{*_j'}^2-S_{*_{j+1}'}^2+S_{*_{j+1}'}^2=\pm\frac{\mu_{k}}{L^2}-S_{*_1}^2+\cdots$. By induction assumption, the right hand side of previous equations have only $\mu_{1}^{+}\cdots\mu_{d-1}^{+}$ possible values. By the inequality constrains in $\CB_{k}$, $S_{*_j'}\ne S_{*_{j+1}'}$, $S_{*_{j+1}'}\ne S_{*_{j+2}'}$. Thus by Lemma \ref{l:numbertheory2}, there are at most $\mu_{1}^{+}\cdots\mu_{d-1}^{+}$ possible values of  $S_{*_j'}$, $S_{*_{j+1}'}$, $S_{*_{j+1}'}$. So we complete the induction.
\end{proof}

If $*$ is the $j$-th nodes on the bottom of $T$, then $S_{T_{*}}=K_j$. Thus there are at most $\mu_{1}^{+}\cdots\mu_{d-1}^{+}$ possible values of all $K_j$, $1\le j\le 2d+1$. This fact implies $\#_{\mu_1,...,\mu_{d-1}}\lesssim \mu_{1}^{+}\cdots\mu_{d-1}^{+}$. So from (\ref{eq:firstsumlemma8}), we know that the first sum in (\ref{eq:lemma8main}) can be bounded by

\begin{equation}\label{eq:lemma8firstpart}
\begin{split}
    &\frac{L^{2d-2}}{\langle K\rangle^{\ell}} \sum_{0\le\mu_k\le L^{10d},\forall 1\le k\le d-1}\frac{1}{\mu_{1}\cdots\mu_{d-1}}\mu_{1}^{+}\cdots\mu_{d-1}^{+}
    \\
    \lesssim& L^{2d-2+}\langle K\rangle^{-\ell} 
\end{split}    
\end{equation}

So the first sum can be bounded by the right hand side of (\ref{eq:lemma8mainreduction}).

\textbf{Estimate Second Sum in (\ref{eq:lemma8main}):} Since $\mu_{k}\ge L^{10d}\Leftrightarrow |\Omega_{2k+1,K}|\ge L^{10d-2}$, by a minute of reflection, we know that the second sum equals to 
\begin{equation}
    \sum_{\substack{\mathcal{S}_{2d+1,K}=0\\\Omega_{2d+1,K}=0\\\exists k,\ |\Omega_{2k+1,K}(T)|\ge L^{10d-2} }}\frac{1_{\CB^{(s)}_{0}(T)}\prod^{d-2}_{k=1}1_{\CB_{k}(T)}}{|\Omega_{5,K}(T)^2\cdots\Omega_{2d-1,K}(T)|}\langle K_1\rangle^{-l}\cdots \langle K_{2d+1}\rangle^{-l}
\end{equation}
By inequality constrains in $\CB_{j}$, $\Omega_{2j+1,K}(T)\ne0$. Thus $|\Omega_{2j+1,K}(T)|\ge \frac{1}{L^2}$ since $(K_1,...,K_{2d+1})\in\mathbb{Z}_L^{2d+1}$.
So above expression can be bounded by 
\begin{equation}
\begin{split}
    &\sum_{\substack{\mathcal{S}_{2d+1,K}=0\\\Omega_{2d+1,K}=0\\\exists k,\ |\Omega_{2k+1,K}(T)|\ge L^{10d-2} }}L^{2d-2} L^{-(10d-2)}1_{\CB^{(s)}_{0}(T)}\langle K_1\rangle^{-\ell}\cdots \langle K_{2d+1}\rangle^{-\ell}
    \\
    &\lesssim\sum_{\substack{\mathcal{S}_{2d+1,K}=0\\\Omega_{2d+1,K}=0\\\exists k,\ |\Omega_{2k+1,K}(T)|\ge L^{10d-2} }}L^{2d-2} L^{-(10d-2)}\langle K_1\rangle^{-\ell}\cdots \langle K_{2d+1}\rangle^{-\ell}
    \\
    \lesssim& L^{-6d}\langle K\rangle^{-\ell}
\end{split}
\end{equation}
The last inequality follows from the same argument as in (\ref{eq:lemma2longequation}) in Lemma \ref{l:numbertheory1}. Note that $\CB^{(s)}_{0}(T)$ produces two equations. If we take advantage of it in the second step, we can get a gain of $L^{-2}$, which we don't need here.  

The second sum can also be bounded by the right hand side of (\ref{eq:lemma8mainreduction}). Thus (\ref{eq:lemma8mainreduction}) is proved. We have finished the first conclusion of this lemma. 

\textbf{Proof of Other Conclusions:} The proof of the second conclusion of this lemma is essentially the same. The proof of the third one follows from of the argument of the last part in Lemma \ref{l:numbertheory1}, because the same induction argument works in Lemma \ref{l:numbertheory5} as in Lemma \ref{l:numbertheory3}. So we complete the proof of Lemma \ref{l:numbertheory4}.
\end{proof} 

\begin{lemma}\label{boundNG} For any $3\le d\leq P$, $\ell>1$,
$$
\| G^{d}_K (u) \|_{X^\ell_K} \lesssim L^{2(d-1)+} \| u \|_{X^\ell}^{2d+1}~.
$$
\end{lemma}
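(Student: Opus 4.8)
The plan is to mirror, essentially line for line, the proof of Lemma~\ref{boundNH}, replacing each $H$-object by its $G$-counterpart and invoking Lemma~\ref{l:numbertheory4} in place of Lemma~\ref{l:numbertheory1}. The whole point is that every piece of genuine number-theoretic content has already been absorbed into Lemma~\ref{l:numbertheory4}, whose \emph{first} conclusion delivers exactly the bound $L^{2(d-1)+}$ required here; what is left is a purely mechanical transcription.

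Concretely, I would first recall the definition \eqref{eq:Gd_u} of $G^d_K(u)$ as a sum over the resonant set $\{\mathcal{S}_{2d+1,K}=0,\ \Omega_{2d+1,K}=0\}$ of $G^d_{K_1\cdots K_{2d+1}}\,d_{K_1}\overline{d_{K_2}}\cdots d_{K_{2d+1}}$ times the oscillatory phase. Next I would substitute the explicit formula \eqref{eq:formulaG^d} (with its index shifted down by one), which writes $G^d_{K_1\cdots K_{2d+1}}$ as a sum over $s\in\{1,2,3,4\}$ and over trees $T\in\CG_{d-1}$ of the rational function $(-1)^{s+\sum_k l^T_k}\,1_{\CB^{(s)}_0(T)}\prod_{k=1}^{d-2}1_{\CB_k(T)}$ divided by $\Omega_{5,K}(T)^2\,\Omega_{7,K}(T)\cdots\Omega_{2d-1,K}(T)$. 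Taking absolute values and applying the triangle inequality over the four values of $s$ and over the finitely many trees in $\CG_{d-1}$ reduces the estimate to bounding, for a single fixed $T$ and fixed $s$, the quantity $\langle K\rangle^\ell\sum 1_{\CB^{(s)}_0(T)}\prod 1_{\CB_k(T)}\,|\Omega_{5,K}(T)^2\cdots\Omega_{2d-1,K}(T)|^{-1}\,|d_{K_1}\cdots d_{K_{2d+1}}|$, which is precisely the first conclusion of Lemma~\ref{l:numbertheory4}. Summing the resulting bound $L^{2(d-1)+}\|d_K\|_{X^\ell}^{2d+1}$ over the $4\,|\CG_{d-1}|$ terms and using $\|d_K\|_{X^\ell}=\|u\|_{X^\ell}$ (recall $|d_K|=|u_K|$) yields the claim.

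The only features that distinguish this from the $H$-case are the squared denominator $\Omega_{5,K}(T)^2$ and the extra equalities carried by the constraints $\CB^{(s)}_0(T)$, namely $S^{s}_1=S^{s+1}_1=S^{s+2}_1$ or $S^4_1=S^5_1=K$. Neither is a real obstacle, and both are already dealt with inside Lemma~\ref{l:numbertheory4}: the squared $\Omega_{5,K}$ is handled by running the divisor-counting argument of Lemma~\ref{l:numbertheory5} (the analogue of Lemma~\ref{l:numbertheory3}) with the $\mu_2$-sum weighted by $\mu_2^{-2}$ instead of $\mu_2^{-1}$, which still converges after the divisor bound $\#_{\mu_2,\dots,\mu_{d-1}}\lesssim\mu_2^{+}\cdots\mu_{d-1}^{+}$, while the additional equalities in $\CB^{(s)}_0(T)$ only shrink the solution set and hence preserve that counting bound. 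Thus the sole bookkeeping hurdle is tracking the four $\CB^{(s)}_0$ branches, each contributing an identical estimate and therefore only a harmless constant factor; I expect no genuine difficulty beyond faithfully reproducing the $H$-argument.
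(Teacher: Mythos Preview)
Your proposal is correct and is exactly the paper's approach: the paper's proof is the single sentence ``Replacing all the letter `H' by `G' and `Lemma~\ref{l:numbertheory1}' by `Lemma~\ref{l:numbertheory4}' in the proof of Lemma~\ref{boundNH} we get a proof of this lemma,'' and you have faithfully (and more explicitly) carried out precisely that transcription.
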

\begin{proof}
Replacing all the letter 'H' by 'G' and 'Lemma \ref{l:numbertheory1}' by 'Lemma \ref{l:numbertheory4}' in the proof of Lemma \ref{boundNH} we get a proof of this lemma.
\end{proof}

\begin{lemma} \label{boundNE} For any $2\le d\leq P$, $\ell>1$,
$$
\| E^{d}_K (u) \|_{X^\ell_K} \lesssim L^{2d+} \| u \|_{X^\ell}^{2d+1}~.
$$
\end{lemma}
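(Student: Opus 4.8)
The plan is to treat Lemma \ref{boundNE} as the $G$-coefficient counterpart of Lemma \ref{boundNF}, in precisely the way that Lemma \ref{boundNG} is the counterpart of Lemma \ref{boundNH}. First I would recall the definition (\ref{eq:Ed_u}) of $E^{d}_K(u)$ and pass to absolute values term by term; the oscillatory factor drops out since $|e(\int_0^t\Omega_{2d+1,K}(s)\,ds)|=1$. The one feature that distinguishes $E^d_K$ from the $H$-case is the denominator $\widetilde{\Omega}_{2d+1,K}$ in (\ref{eq:Ed_u}). On the nonresonant set $\{\Omega_{2d+1,K}\ne0\}$ one has $|\Omega_{2d+1,K}|\ge L^{-2}$ because $K_1,\dots,K_{2d+1}\in\mathbb{Z}_L$, whereas the correction $\tfrac{\epsilon^2}{2\pi L^2}D_{2d+1,K}$ satisfies $|\tfrac{\epsilon^2}{2\pi L^2}D_{2d+1,K}|\lesssim \tfrac{\epsilon^2}{L^2}\|u\|_{X^\ell}^2$; since $\epsilon^2L^{\gamma}\ll1$ this is a small fraction of $|\Omega_{2d+1,K}|$, so $|\widetilde{\Omega}_{2d+1,K}|\gtrsim|\Omega_{2d+1,K}|$ and hence $\tfrac{1}{|\widetilde{\Omega}_{2d+1,K}|}\lesssim\tfrac{1}{|\Omega_{2d+1,K}|}$. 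This is the same reduction used in the proof of Lemma \ref{boundNF}.

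Next I would insert the explicit formula (\ref{eq:formulaG^d}) for the coefficient $G^{d}_{K_1\dots K_{2d+1}}$ from Lemma \ref{l:formulaH^d}, and bound $|\epsilon_{\CB}|\le\sum_{s=1}^{4}1_{\CB^{(s)}_{0}(T)}\prod_{k=1}^{d-2}1_{\CB_{k}(T)}$. The key accounting point is that combining the $G$-denominator $\Omega_{5,K}(T)^2\Omega_{7,K}(T)\cdots\Omega_{2d-1,K}(T)$ with the extra factor $\tfrac{1}{\Omega_{2d+1,K}}$ extends the product to the bottom level of $T\in\CG_{d-1}$: there $S_{T_{*_j}}=K_j$, so $\Omega_{2d+1,K}(T)=2\pi\Omega_{2d+1,K}(K_1,\dots,K_{2d+1})$, and the combined denominator is exactly $\Omega_{5,K}(T)^2\Omega_{7,K}(T)\cdots\Omega_{2d+1,K}(T)$, the one appearing in Lemma \ref{l:numbertheory4}. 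Writing $|d_{K_j}|=|u_{K_j}|$, this gives
\begin{equation*}
|E^{d}_K(u)|\lesssim\sum_{T\in\CG_{d-1}}\sum_{s=1}^{4}\sum_{\substack{\mathcal{S}_{2d+1,K}=0\\\Omega_{2d+1,K}\ne0}}\frac{1_{\CB^{(s)}_{0}(T)}\prod_{k=1}^{d-2}1_{\CB_{k}(T)}}{|\Omega_{5,K}(T)^2\Omega_{7,K}(T)\cdots\Omega_{2d+1,K}(T)|}\,|d_{K_1}\cdots d_{K_{2d+1}}|.
\end{equation*}

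Finally I would apply the second conclusion of Lemma \ref{l:numbertheory4} to each pair $(T,s)$, which bounds $\langle K\rangle^\ell$ times the inner sum by $L^{2d+}\|d_K\|_{X^\ell}^{2d+1}$. Since $\CG_{d-1}$ contains only finitely many trees (a number depending on $d$ alone) and $s$ runs over four values, the outer sums contribute only a constant, and $\|d_K\|_{X^\ell}=\|u\|_{X^\ell}$ because $|d_K|=|u_K|$. Taking the supremum over $K$ then yields $\|E^{d}_K(u)\|_{X^\ell_K}\lesssim L^{2d+}\|u\|_{X^\ell}^{2d+1}$. I do not anticipate any real obstacle here: all of the arithmetic has already been packaged into Lemma \ref{l:numbertheory4}, and what remains is the identical bookkeeping carried out in Lemmas \ref{boundNF} and \ref{boundNG}; the only step that genuinely invokes a hypothesis is the comparison $|\widetilde{\Omega}_{2d+1,K}|\gtrsim|\Omega_{2d+1,K}|$ above, which is where the smallness of $\epsilon^2L^{\gamma}$ enters.
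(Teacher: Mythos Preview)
Your proposal is correct and follows essentially the same approach as the paper, which simply instructs one to replace $F$ by $E$ and Lemma \ref{l:numbertheory1} by Lemma \ref{l:numbertheory4} in the proof of Lemma \ref{boundNF}. You spell out the bookkeeping in more detail and additionally justify the passage from $\widetilde{\Omega}_{2d+1,K}$ to $\Omega_{2d+1,K}$ in the denominator, a point the paper handles tacitly (it restates the definition of $F^d_K$ with $\Omega$ rather than $\widetilde{\Omega}$ in the proof of Lemma \ref{boundNF}).
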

\begin{proof}
Replacing all the letter 'F' by 'E' and 'Lemma \ref{l:numbertheory1}' by 'Lemma \ref{l:numbertheory4}' in the proof of Lemma \ref{boundNF} we get a proof of this lemma.
\end{proof}

\begin{lemma}\label{boundNtildeG}
For any $\ell>1$
$$
\| \widetilde{G}^{P+1}_K \|_{X^\ell_K} \lesssim L^{2d+2+} \| u \|_{X^\ell}^{2d+3}~.
$$
\end{lemma}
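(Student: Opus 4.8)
The plan is to run the argument of Lemma \ref{boundNtildeH} essentially verbatim, with the $H$-coefficients replaced by the $G$-coefficients and with the number-theoretic input Lemma \ref{l:numbertheory1} replaced by its $G$-counterpart, Lemma \ref{l:numbertheory4}. Since $\widetilde{G}^{P+1}_K(u)$ is a $(2P+3)$-linear form, the exponents in the statement are those with $d=P$; that is, the claim to be established is $\| \widetilde{G}^{P+1}_K \|_{X^\ell_K} \lesssim L^{2P+2+} \| u \|_{X^\ell}^{2P+3}$.

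First I would recall the definition (\ref{eq:GP_u}) of $\widetilde{G}^{P+1}_K(u)$ together with the explicit formula (\ref{eq:formulaG^d}) for $G^{P+1}_{K_1\cdots K_{2P+3}}$ from Lemma \ref{l:formulaH^d}. Taking absolute values inside the sum defining $\widetilde{G}^{P+1}_K(u)$, using $|e(\cdots)|=1$, and applying the triangle inequality over the four values of $s$ and over the trees $T\in\CG_P$ gives
\begin{equation*}
|\widetilde{G}^{P+1}_K(u)|\le\sum_{s=1}^{4}\sum_{T\in\CG_P}\sum_{\mathcal{S}_{2P+3,K}=0}\frac{1_{\CB^{(s)}_{0}(T)}\prod_{k=1}^{P-1}1_{\CB_{k}(T)}}{|\Omega_{5,K}(T)^2\Omega_{7,K}(T)\cdots\Omega_{2P+1,K}(T)|}\,|d_{K_1}\overline{d_{K_2}}\cdots d_{K_{2P+3}}|.
\end{equation*}
Next I would apply the third conclusion of Lemma \ref{l:numbertheory4} to each inner sum, which bounds it by $L^{2(P+1)+}\langle K\rangle^{-\ell}\|d_K\|_{X^\ell}^{2P+3}$ for every fixed $s$ and $T$. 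The two outer sums run over finite index sets — the four values of $s$ and the $|\CG_P|$ trees, $|\CG_P|$ being a constant depending only on $P$ — so they contribute only a constant factor. Finally, using $|d_K|=|u_K|$ (hence $\|d_K\|_{X^\ell}=\|u\|_{X^\ell}$), I would conclude $\langle K\rangle^\ell|\widetilde{G}^{P+1}_K(u)|\lesssim L^{2P+2+}\|u\|_{X^\ell}^{2P+3}$, and taking the supremum over $K\in\mathbb{Z}_L$ yields the asserted $X^\ell_K$ bound.

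The one point requiring care in passing from $H$ to $G$ is that the denominator now carries the factor $\Omega_{5,K}(T)$ \emph{squared} and the admissibility of a tree is governed by the sets $\CB^{(s)}_{0}(T)$ and $\CB_{k}(T)$ rather than by $\CA_{k}(T)$. This is, however, precisely the combinatorial structure for which Lemma \ref{l:numbertheory4} (via the inductive counting in Lemma \ref{l:numbertheory5} and the divisor estimate of Lemma \ref{lemma3}) was designed, so no genuinely new difficulty arises: the entire number-theoretic content, namely that the number of admissible $(K_1,\dots,K_{2P+3})$ is controlled by $\mu_2^{+}\cdots\mu_{P}^{+}$, is already encapsulated there. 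The main — and indeed only — obstacle has thus been discharged in Lemma \ref{l:numbertheory4}, and what remains is the routine bookkeeping described above.
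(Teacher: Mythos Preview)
Your proposal is correct and follows exactly the paper's own approach: the paper's proof literally says to replace `$\widetilde{H}$' by `$\widetilde{G}$' and `Lemma \ref{l:numbertheory1}' by `Lemma \ref{l:numbertheory4}' in the proof of Lemma \ref{boundNtildeH}. Your additional remark that the statement should read $L^{2P+2+}\|u\|_{X^\ell}^{2P+3}$ (i.e.\ $d=P$) is also correct---the `$d$' in the displayed bound is a typo in the paper.
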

\begin{proof}
Replacing all the letter '$\widetilde{H}$' by '$\widetilde{G}$' and 'Lemma \ref{l:numbertheory1}' by 'Lemma \ref{l:numbertheory4}' in the proof of Lemma \ref{boundNtildeH} we get a proof of this lemma.
\end{proof}

\begin{lemma}\label{boundNS}
For any $\ell>1$, $3\le d\leq P$, assume that $\epsilon^2\| u \|_{X^\ell}^2\le\frac{1}{10}$ if $d=3$, then we have 
$$
\|S^{d}_K (u)\|_{X^\ell_K} \lesssim L^{2(d-1)+} \| u \|_{X^\ell}^{2d+1}~.
$$
\end{lemma}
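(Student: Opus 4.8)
The plan is to treat $S^{d}_K(u)$ exactly as $F^{d-1}_K(u)$ and $E^{d-1}_K(u)$ were treated in Lemmas \ref{boundNF} and \ref{boundNE}, but now carrying one extra quadratic factor $D_{2d-1,K}(t)$. Recall from the recurrence (\ref{eq:recurrenceS^d_K}) (read with its index shifted, so that the $(d{+}2)$-th level there becomes our $d$-th level and the number of summation variables is $2d-1$) that $S^{d}_K(u)$ is a sum over $\mathcal S_{2d-1,K}=0$, $\Omega_{2d-1,K}\neq0$ of the coefficient $\Omega_{2d-1,K}^{-1}\bigl(H^{d-1}_{K_1\dots K_{2d-1}}+G^{d-1}_{K_1\dots K_{2d-1}}\bigr)$ times $D_{2d-1,K}(t)\,d_{K_1}\cdots d_{K_{2d-1}}$ and a unit-modulus phase. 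First I would record, via Lemma \ref{l:formulaH^d}, the tree expansions of these coefficients: $\Omega_{2d-1,K}^{-1}H^{d-1}_{K_1\dots K_{2d-1}}$ is a finite sum over $T\in\CG_{d-2}$ of $\pm\prod_{k}1_{\CA_k(T)}$ divided by $\Omega_{3,K}(T)\cdots\Omega_{2d-1,K}(T)$, and $\Omega_{2d-1,K}^{-1}G^{d-1}_{K_1\dots K_{2d-1}}$ is a similar sum of $\pm 1_{\CB_0^{(s)}(T)}\prod_k 1_{\CB_k(T)}$ divided by $\Omega_{5,K}(T)^2\Omega_{7,K}(T)\cdots\Omega_{2d-1,K}(T)$. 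These are precisely the summands governed by the second conclusions of Lemmas \ref{l:numbertheory1} and \ref{l:numbertheory4}.

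Next I would dispose of the extra factor cheaply. Since $D_{2d-1,K}(t)=\sum_{j=1}^{2d-1}(-1)^{j-1}|d_{K_j}|^2$ and $|d_{K_j}|=|u_{K_j}|\le\|u\|_{X^\ell}$, we get $|D_{2d-1,K}(t)|\le(2d-1)\|u\|_{X^\ell}^2$, uniformly in the summation indices and in $t$. Pulling this factor out and passing to absolute values, $|S^{d}_K(u)|$ is dominated by $(2d-1)\|u\|_{X^\ell}^2$ times a finite sum over $T\in\CG_{d-2}$ of exactly the sums bounded in the second conclusions of Lemma \ref{l:numbertheory1} (for the $H^{d-1}$ part) and Lemma \ref{l:numbertheory4} (for the $G^{d-1}$ part), each applied with its running parameter $d$ replaced by $d-1$, so that the denominators run up to $\Omega_{2d-1,K}(T)$ and the constraint is $\Omega_{2d-1,K}\neq0$. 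Each such sum contributes $\lesssim L^{2(d-1)+}\|u\|_{X^\ell}^{2d-1}\langle K\rangle^{-\ell}$. Multiplying by the $\|u\|_{X^\ell}^2$ extracted from $D$ and by the (constant, $d\le P$-dependent) number of trees yields $\langle K\rangle^\ell|S^{d}_K(u)|\lesssim L^{2(d-1)+}\|u\|_{X^\ell}^{2d+1}$, which is the claim.

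The point requiring genuine care, and the reason for the extra hypothesis at $d=3$, is the matching of the denominator that actually appears in $S^{d}_K(u)$ with the frequency treated by the number-theory lemmas. The quintic term was produced by differentiation by parts against the shifted frequency $\widetilde\Omega_{2d-1,K}=\Omega_{2d-1,K}-\tfrac{\epsilon^2}{2\pi L^2}D_{2d-1,K}$, and when one identifies $S^{d}$ with the polynomials $F^{d-1}$, $E^{d-1}$ of (\ref{eq:Fd_u}), (\ref{eq:Ed_u}) (which carry $\widetilde\Omega_{2d-1,K}$ in the outermost denominator) this shift must be shown harmless. The plan is to prove $|\widetilde\Omega_{2d-1,K}|\ge\tfrac12|\Omega_{2d-1,K}|$ on the non-resonant set: there $K_1,\dots,K_{2d-1}\in\mathbb{Z}_L$ force $|\Omega_{2d-1,K}|\ge L^{-2}$, whereas $\tfrac{\epsilon^2}{2\pi L^2}|D_{2d-1,K}|\le\tfrac{(2d-1)\epsilon^2\|u\|_{X^\ell}^2}{2\pi L^2}$, so the shift is at most a fixed fraction of $|\Omega_{2d-1,K}|$ once $\epsilon^2\|u\|_{X^\ell}^2$ is small, giving $|\widetilde\Omega_{2d-1,K}|^{-1}\le 2|\Omega_{2d-1,K}|^{-1}$. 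The value $d=3$ is the only place where this comparison is sharp, because the estimate there is controlled by the single small quintic denominator $\Omega_{5,K}$ (respectively $\Omega_{5,K}^{2}$ for the $G^2$ contribution) with no further inner $\Omega$-factors to spare; the condition $\epsilon^2\|u\|_{X^\ell}^2\le\tfrac1{10}$ is exactly what secures it, since then $\tfrac{2d-1}{2\pi}\epsilon^2\|u\|_{X^\ell}^2<\tfrac12$. For $d\ge4$ the same comparison is available but is absorbed by the additional inner frequency factors already present, so no new smallness is imposed. I expect this quintic base case to be the only ingredient beyond the bookkeeping already developed for the $F$- and $E$-estimates.
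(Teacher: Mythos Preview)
Your first two paragraphs are essentially the paper's own proof: pull out the uniform bound $|D_{2d-1,K}(t)|\lesssim\|u\|_{X^\ell}^2$, expand $H^{d-1}$ and $G^{d-1}$ via Lemma~\ref{l:formulaH^d}, and invoke the second conclusions of Lemmas~\ref{l:numbertheory1} and~\ref{l:numbertheory4} with the running index shifted by one. That is exactly what the paper does.

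Your third paragraph, however, is unnecessary and rests on a misreading. The outermost denominator in $S^{d}_K(u)$ is $\Omega_{2d-1,K}$, not $\widetilde\Omega_{2d-1,K}$: look at the recurrence (\ref{eq:recurrenceS^d_K}), where the factor is explicitly $D_{2d+3,K}(t)/(2\pi\Omega_{2d+3,K})$. The differentiation-by-parts in Section~\ref{sec:NF-section} uses $e(\int_0^t\widetilde\Omega\,ds)=e(-\tfrac{\epsilon^2}{2\pi L^2}\int_0^t D\,ds)\,\partial_t e(\Omega t)/(2\pi i\Omega)$, so the division is always by the unperturbed $\Omega$; the $\widetilde\Omega$ appearing in the printed definitions (\ref{eq:Fd_u}), (\ref{eq:Ed_u}) is a typo, as the proof of Lemma~\ref{boundNF} itself confirms by setting $F^d_{K_1\dots K_{2d+1}}=H^d_{K_1\dots K_{2d+1}}/(2\pi\Omega_{2d+1,K})$. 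So no comparison $|\widetilde\Omega|\ge\tfrac12|\Omega|$ is needed here, and your proposed explanation of the hypothesis $\epsilon^2\|u\|_{X^\ell}^2\le\tfrac1{10}$ at $d=3$ is not the operative one. Indeed the paper's own argument never invokes that hypothesis; it appears to be vestigial. Your proof is complete already at the end of your second paragraph.
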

\begin{proof}
Recall that 
\begin{equation}
\begin{split}
    &S^{d+2}_K(u)=\sum_{\substack{\mathcal{S}_{2d+3,K} =0}} (H^{d+1}_{K_1K_2...K_{2d+3}}+G^{d+1}_{K_1K_2...K_{2d+3}}) d_{K_1}\overline{d_{K_2}}...d_{K_{2d+3}}\frac{D_{2d+3,K}(t)}{2\pi\Omega_{2d+3,K}} e(\int_0^t\widetilde{\Omega}_{2d+3,K}(s)ds).
\end{split}
\end{equation}

Denote 
\begin{equation}
    S^{d+2}_{K_1K_2...K_{2d+3}}=\frac{(H^{d+1}_{K_1K_2...K_{2d+3}}+G^{d+1}_{K_1K_2...K_{2d+3}})1_{\Omega_{2d+3,K}\ne0}}{2\pi\Omega_{2d+3,K}(K_1,...,K_{2d+3})},
\end{equation}
then
\begin{equation}\label{notation:F''}
    S^{d+2}_K(u)=\sum_{\substack{\substack{\mathcal{S}_{2d+3,K} =0}}} S^{d+2}_{K_1K_2...K_{2d+3}}b_{K_1}\overline{b_{K_2}}...b_{K_{2d+3}}D_{2d+3,K}(t) e(\int_0^t\widetilde{\Omega}_{2d+3,K}(s)ds).
\end{equation}

By Lemma \ref{l:formulaH^d},
\begin{equation}
\begin{split}
    &S^{d+2}_{K_1...K_{2d+3}}=\sum_{T\in\CG_{d}}\frac{(-1)^{\sum_{k=0}^{d-1}l^T_{k}}\prod^{d-1}_{k=0}1_{\CA_{k}(T)}}{\Omega_{3,K}(T)\Omega_{5,K}(T)\cdots\Omega_{2d+3,K}(T)}(K_1,...,K_{2d+3}).
    \\
    +&\sum_{j=1}^{4}\sum_{T\in\CG_d}\frac{1^{s+\sum^{d-1}_{k=2}l^T_{k}}1_{\CB^{(s)}_{0}(T)}\prod^{d-1}_{k=1}1_{\CB_{k}(T)}}{\Omega_{5,K}(T)^2\Omega_{7,K}(T)\cdots\Omega_{2d+3,K}(T)}(K_1,...,K_{2d+3})d_{K_1}\cdots d_{K_{2d+3}}
\end{split}
\end{equation}

By (\ref{notation:F''}), 
\begin{equation}
\begin{split}
    |S^{d+2}_K(u)|&\le\sum_{\substack{\substack{\mathcal{S}_{2d+3,K} =0}}} |F^{d+1}_{K_1K_2...K_{2d+3}}||b_{K_1}\overline{b_{K_2}}...b_{K_{2d+3}}||D_{2d+3,K}(t)|
    \\
    &\le ||u||_{X^\ell}^2\sum_{T\in\CG_{d}}\sum_{\substack{\mathcal{S}_{2d+3,K} =0\\\Omega_{2d+3,K}\ne0}}\frac{\prod^{d-1}_{k=0}1_{\CA_{k}(T)}}{|\Omega_{3,K}(T)\Omega_{5,K}(T)\cdots\Omega_{2d+3,K}(T)|}(K_1,...,K_{2d+3})|d_{K_1}\overline{d_{K_2}}...d_{K_{2d+3}}|
    \\
    &+||u||_{X^\ell}^2\sum_{T\in\CG_{d}}\sum_{\substack{\mathcal{S}_{2d+3,K}=0\\\Omega_{2d+3,K}\ne0}}\frac{1_{\CB^{(s)}_{0}(T)}\prod^{d-1}_{k=1}1_{\CB_{k}(T)}}{|\Omega_{5,K}(T)^2\Omega_{7,K}(T)\cdots\Omega_{2d+3,K}(T)|}(K_1,...,K_{2d+3})|d_{K_1}\overline{d_{K_2}}\cdots d_{K_{2d+3}}|.
\end{split}
\end{equation}

By the second conclusion in Lemma \ref{l:numbertheory1} and Lemma \ref{l:numbertheory4}, 
\begin{equation}
\begin{split}
    |S^{d+2}_K(u)|\leq|\CG_{d}|||d_{K}||_{X^\ell_{K}}^{2d+5}\frac{L^{2d+2+}}{\langle K\rangle^{\ell}}
    \lesssim ||u||_{X^\ell}^{2d+5}\frac{L^{2d+2+}}{\langle K\rangle^{\ell}}
\end{split}
\end{equation}
where $|\CG_{d}|$ is the number of trees in $\CG_{d}$.
Thus $\|S^{d+2}_K \|_{X^\ell_K} \lesssim L^{2d+2+} \| u \|_{X^\ell}^{2d+5}$. Replacing $d$ by $d-2$, we complete the proof. 
\end{proof}

\subsection{Dynamics of $d_{K}$} Consider the continuous equation 
\begin{equation}\label{eq:cr}
-i\partial_t f(K)=|f(K)|^4f(K),\ \ \ f(K)|_{t=0}=f_0(K), \ \ \ K\in\R.
\end{equation}
We can solve this ODE and solution is $f(K)=e(\frac{1}{2\pi}|f_0(K)|^4t)f_0(K).$ In this section, we compare $f(K)$ with $c_K$ (\ref{eq:transformedmain}) and obtain a long time dynamics description for $d_K$ in Proposition \ref{boundphase}.

\begin{proposition}\label{boundphase}
Fix $\ell>1$, $0<\gamma<1$, and let $f_0\in X^{\ell,2}(\mathbb{R})$. Let $f(t,\xi)$ be the solution of (\ref{eq:cr}) over time interval $[0,M]$ with initial data $f_0$. Let 
$B\overset{def}{=}\|f(0)\|_{X^{\ell,2}(\RR)}.$ Let u be the solution of (\ref{1NLS}) with initial data $u_0 = \frac{1}{L} \sum_{\mathbb{Z}_L} f_0(K) e(Kx)$.
Then for $L$ sufficiently large and  $\epsilon^2L^\gamma$ sufficiently small, depending on $M$, $B$, there exists  a  constant
$C_{\gamma}$ such that or all $t\in [0, MT_R]$,
\begin{align*}
\begin{split}
\norm{d_K-f(\frac{t}{T_R}, K)}_{X^\ell(\mathbb{Z}_L)}\lesssim C_{\gamma}  \Big(\epsilon^2L^\gamma+L^{-1}\Big),
\end{split}
\end{align*}
where $T_R=\frac{L^{2}}{\epsilon^4 }$. 
\end{proposition}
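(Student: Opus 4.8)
The plan is to treat Proposition \ref{boundphase} as a long-time perturbation of the ODE (\ref{eq:cr}). After the $P$ normal form transformations the profile $c_K$ obeys (\ref{eq:transformedmain}), and by the cancellation of resonances (Theorem \ref{MainTheorem1}) the leading quintic term has already been reduced in (\ref{eq:mainterm}) to the diagonal expression $\frac{3\epsilon^4}{L^4}\sum_{K_1\ne K}\frac{|d_{K_1}|^4}{(K_1-K)^2}d_K$, a \emph{real} multiple of $d_K$ that only rotates its phase. I would first record that $c_K-d_K=-\sum_{d=1}^{P}\frac{\epsilon^{2d}}{L^{2d}}(F^d_K(u)+E^d_K(u))$, so Lemmas \ref{boundNF} and \ref{boundNE} give $\|c_K-d_K\|_{X^\ell}\lesssim\sum_d\epsilon^{2d}L^{0+}\|u\|_{X^\ell}^{2d+1}\lesssim\epsilon^2 L^{0+}$, already inside the target error. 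The whole argument runs as a bootstrap on $A(t):=\|d_K(t)\|_{X^\ell}$ under the assumption $A\le 2B'$, where $B':=\sup_{s\in[0,M]}\|f(s)\|_{X^\ell}$; here $\|f(s)\|_{X^\ell}=\|f_0\|_{X^\ell}$ since (\ref{eq:cr}) preserves $|f|$ pointwise, and the comparison starts from $0$ because $d_K(0)=u_K(0)=f_0(K)=f(0,K)$.

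Next I would gather every non-leading term of (\ref{eq:transformedmain}) into a remainder $\mathrm{Err}_K$ and bound it with the estimates of this section. Lemmas \ref{boundNH}, \ref{boundNG} and \ref{boundNS} give $\frac{\epsilon^{2d}}{L^{2d}}\|H^d_K(u)\|_{X^\ell}+\frac{\epsilon^{2d}}{L^{2d}}\|G^d_K(u)\|_{X^\ell}+\frac{\epsilon^{2d}}{L^{2d}}\|S^d_K(u)\|_{X^\ell}\lesssim\epsilon^{2d}L^{-2+}$ for $3\le d\le P$, dominated by $d=3$, while Lemmas \ref{boundNtildeH} and \ref{boundNtildeG} give the top remainders $\frac{\epsilon^{2P+2}}{L^{2P+2}}(\|\widetilde H^{P+1}_K\|_{X^\ell}+\|\widetilde G^{P+1}_K\|_{X^\ell})\lesssim\epsilon^{2P+2}L^{0+}$. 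The decisive bookkeeping is that integrating over $[0,MT_R]$ with $T_R=L^2/\epsilon^4$ supplies a factor $L^2/\epsilon^4$, so $\int_0^{MT_R}\|\mathrm{Err}_K\|_{X^\ell}\,dt\lesssim M(\epsilon^2 L^{0+}+\epsilon^{2P-2}L^{2+})$; taking $P=P(\gamma)$ large forces the second piece below $\epsilon^2 L^\gamma$. The same factor shows the $O(\epsilon^4/L^2)$ Lipschitz constant of the main term integrates to $O(M)$, so the eventual Gronwall amplification is the harmless constant $e^{O(M)}$.

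The heart of the proof, and the step I expect to be the main obstacle, is to identify the diagonal term (\ref{eq:mainterm}) with the discretization of the nonlinearity of (\ref{eq:cr}). Using the bootstrap to replace $|d_{K_1}|$ by $|f(t/T_R,K_1)|$ (the replacement error is Lipschitz in $d_K-f$ and is absorbed by Gronwall), and setting $K_1-K=m/L$, I would analyze $\sum_{K_1\ne K}\frac{|f(K_1)|^4}{(K_1-K)^2}=L^2\sum_{m\ne 0}\frac{|f(K+m/L)|^4}{m^2}$. The near-diagonal terms give $|f(K)|^4\,L^2\sum_{m\ne 0}m^{-2}$, a fixed constant times $L^2|f(K)|^4$; symmetrizing in $m$ cancels the first-order Taylor term, and the second-order term, controlled by the assumption $f_0\in X^{\ell,2}$, shows the remainder is $O(L)$. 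Hence $\frac{\epsilon^4}{L^4}H^2_K(u)=\frac{\epsilon^4}{L^2}c\,|f(K)|^4 d_K+O(\epsilon^4 L^{-3})$ with $c$ matching the coefficient of (\ref{eq:cr}) after the rescaling $s=t/T_R$; the delicacy lies in the non-integrable singularity of $(K_1-K)^{-2}$ at the diagonal and in the growth of $\partial_K f$ over the rescaled interval $[0,M]$, which is exactly why two derivatives on $f_0$ are needed.

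Finally, with $\rho(t):=\|d_K(t)-f(t/T_R,K)\|_{X^\ell}$, I would subtract $-i\partial_t f(t/T_R,K)=\frac{\epsilon^4}{L^2}|f|^4 f$ from the equation for $d_K$ and estimate $-i\partial_t(d_K-f)$ as the Lipschitz main term applied to $d_K-f$, plus $\mathrm{Err}_K$, plus the $O(\epsilon^4 L^{-3})$ discretization error, plus the $O(\epsilon^6 L^{-2+})$ error from exchanging $d_K$ for $c_K$ inside the main term. Gronwall on $[0,MT_R]$ with amplification $e^{O(M)}$ and total forcing $\lesssim\epsilon^2 L^{0+}+\epsilon^{2P-2}L^{2+}+L^{-1}$ yields $\rho(t)\lesssim C_\gamma(\epsilon^2 L^\gamma+L^{-1})$ for all $t\in[0,MT_R]$. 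This closes the bootstrap, since then $A\le B'+\rho<2B'$, and together with $\|c_K-d_K\|_{X^\ell}\lesssim\epsilon^2 L^{0+}$ it establishes the proposition.
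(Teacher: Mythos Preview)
Your proposal is correct and follows essentially the same route as the paper: bootstrap on $\|u_K\|_{X^\ell}=\|d_K\|_{X^\ell}$, use $\|c_K-d_K\|_{X^\ell}\lesssim\epsilon^2L^{0+}$ via Lemmas \ref{boundNF}--\ref{boundNE}, subtract the rescaled (\ref{eq:cr}) from (\ref{eq:transformedmain}), bound the error terms by Lemmas \ref{boundNH}--\ref{boundNS} and \ref{boundNtildeH}--\ref{boundNtildeG}, handle the leading quintic by the symmetrization-in-$m$ Taylor argument using $f_0\in X^{\ell,2}$, and close with Gronwall over $[0,MT_R]$. The only cosmetic difference is that the paper runs Gronwall on $w_K=c_K-f$ rather than on $d_K-f$, which is equivalent given the smallness of $c_K-d_K$ you already recorded.
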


\begin{proof}
We shall use bootstrap argument to show that $\sup_{0\le t\le T}||u_K(t)||_{X^\ell} \le 2B$. To do this, it suffices to show that
\begin{equation}\label{eq:4.46}
    \sup_{0\le t\le T}||u_K(t)||_{X^\ell} \le 2B\Rightarrow \sup_{0\le t\le T}||u_K(t)||_{X^\ell} < 2B 
\end{equation}
for any $T<MT_R$. To see why (\ref{eq:4.46}) implies $\sup_{0\le t\le T}||u_K(t)||_{X^\ell} \le 2B$, notice that initially $||u(0)||_{X^\ell}=||f(0)||_{X^\ell}\le B$, if at some time $t_0\le MT_R$, $||u(t_0)||_{X^\ell}\geq 2B$, by continuity there exists $0<T\leq t_0$ s.t. $||u_K(T)||_{X^\ell}=2B$ and $||u_K(T)||_{X^\ell}=\sup_{0\le t\le T}||u_K(t)||_{X^\ell}$. But by (\ref{eq:4.46}), $||u_K(T)||_{X^\ell}<2B$, which is a contradiction.

Notice that 
\begin{equation}\label{eq:prop1mainreduction}
    \sup_{0\le t\le T}||u_K(t)||_{X^\ell} \leq 2B\Rightarrow\sup_{0\le t \le T} \left\| d_K(t) - f\left(\frac{t}{T_R},K\right)\right\|_{X^\ell} \lesssim C_{\gamma}  \Big(\epsilon^2L^\gamma+L^{-1}\Big).
\end{equation}
implies (\ref{eq:4.46}). So we only need to prove (\ref{eq:prop1mainreduction}). 

In what follows, we sometimes denote $f(\frac{t}{T_R},K)$ by $f(K)$ for simplicity.

From Lemma~\ref{boundNF} we have
\begin{align*}
\begin{split}
\left\| d_K(t) - f\left(\frac{t}{T_R},K\right)\right\|_{X^\ell} \le &  \left\| c_K(t) - f\left(\frac{t}{T_R},K\right)\right\|_{X^\ell}  + \| d_K(t) -  c_K(t) \|_{X^\ell} \\
 \le &  \left\| c_K(t) - f\left(\frac{t}{T_R},K\right)\right\|_{X^\ell} + C_\gamma\left(\sum_{d=1}^P
\epsilon^{2d}   \|u\|^{2d+1}_{X^{\ell}} \right)  L^{\gamma}~\\
\le &  \left\| c_K(t) - f\left(\frac{t}{T_R},K\right)\right\|_{X^\ell} + C_\gamma\left(\sum_{d=1}^P\epsilon^{2d}  B^{2d+1}\right)  L^{\gamma}.
\end{split}
\end{align*}

To bound $w_K := c_K(t) - f(\frac{t}{T_R},K)$, from (\ref{eq:transformedmain}), we have
\begin{align}\label{eq:wk}
\begin{split}
    -i\partial_tw_K&=\underbrace{\frac{\epsilon^{4}}{L^{2}}\big(\frac{1}{L^2}H^2_{K}(u) - |f(K)|^4f(K)\big)}_{I}+\underbrace{\sum_{d=3}^P\frac{\epsilon^{2d}}{L^{2d}}(H^{d}_{K}(u)+G^{d}_{K}(u))}_{II}\\
    &+\underbrace{\sum_{d=3}^{P+1} \frac{\epsilon^{2d}}{L^{2d}} S^{d}_{K}(u)}_{III}+\underbrace{\frac{\epsilon^{2P+2}}{L^{2P+2}}(\widetilde{H}^{P+1}_{K}(u)+\widetilde{G}^{P+1}_{K}(u))}_{IV}\\
    &=I+II+III+IV,
\end{split}
\end{align}
where $I$ is the main term, while $II$, $III$, and $IV$ are errors.

\textbf{Estimate of Main Term $I$:} To estimate the main term, we use the method of symmetrization. Recall that, from (\ref{eq:mainterm}), we have 
\begin{align}
\frac{\epsilon^{4}}{L^{4}}H_K^2(u)&=\frac{3\epsilon^4}{L^4}\sum_{\substack{K_1\ne K}} \frac{|d_{K_1}|^4}{(K_1-K)^2}d_K\\
&=\frac{3\epsilon^4}{L^4}\sum_{\substack{K_1\ne 0}} \frac{|d_{K_1+K}|^4+|d_{K-K_1}|^4-2|d_K|^4}{2K_1^2}d_K+\frac{3\epsilon^4}{L^4}\Big(\sum_{\substack{K_1\ne 0}}\frac{1}{K_1^2}\Big)|d_K|^4d_K.
\end{align}

Therefore,
\begin{align*}
\begin{split}
I&=\frac{3\epsilon^4}{L^2}\left(L^{-2}\Big(\sum_{\substack{K_1\ne 0}}\frac{1}{K_1^2}\Big)|d_K|^4d_K-|f(K)|^4f(K)\right)\\
&+\frac{3\epsilon^4}{L^4}\sum_{\substack{K_1\ne 0}} \frac{|d_{K_1+K}|^4+|d_{K-K_1}|^4-2|d_K|^4}{2K_1^2}d_K\\
&=:I_1+I_2.
\end{split}
\end{align*}
Notice that
\begin{align*}
\begin{split}
I_1=\frac{3\epsilon^4}{L^2}\left(\Big(\sum_{\substack{\mu\ne 0\\ \mu\in\mathbb{Z}\\ \mu\leq L}}\frac{1}{\mu^2}\Big)|d_K|^4d_K-|f(K)|^4f(K)\right),
\end{split}
\end{align*}
so for $I_1$, we have
\begin{align*}
\begin{split}
\norm{I_1}_{X^\ell}\leq \frac{3\epsilon^4}{L^2}B^4\left\| d_K(t) - f\left(\frac{t}{T_R},K\right)\right\|_{X^\ell}.
\end{split}
\end{align*}
For $I_2$, recall that for $g\in X^{\ell,2}$
\begin{align}\label{eq:decay}
\begin{split}
    &\sum_{x\in\mathbb{Z}_L\backslash0} \frac{g(x)+g(-x)-2g(0)}{x^2}
    \\
    = & \sum_{\substack{x\in\mathbb{Z}_L\backslash0\\|x|<1}}\frac{g(x)+g(-x)-2g(0)}{x^2}+ \sum_{\substack{x\in\mathbb{Z}_L\backslash0\\|x|>1}} \frac{g(x)+g(-x)-2g(0)}{x^2}
    \\
    \lesssim & L\norm{g''}_{L^\infty}+\sum_{\substack{x\in\mathbb{Z}_L\backslash0\\|x|>1}} \frac{1}{x^2}\norm{g}_{L^\infty}
     \lesssim L\norm{g}_{X^{0,2}}.
     \end{split}
\end{align}

Therefore,
\begin{align*}
\begin{split}
I_2&=\frac{3\epsilon^4}{L^4}\sum_{\substack{K_1\ne 0}}\Big( \frac{|d_{K_1+K}|^4+|d_{K-K_1}|^4-2|d_K|^4}{2  K_1^2}d_K\\
&-\frac{|f_0(K_1+K)|^4+|f_0(K-K_1)|^4-2|f_0(K)|^4}{K_1^2}f(K)\Big) \\
&+\frac{3\epsilon^4}{L^4}\sum_{\substack{K_1\ne 0}}\frac{|f_0(K_1+K)|^4+|f_0(K-K_1)|^4-2|f_0(K)|^4}{K_1^2}f(K) \\
&\leq \frac{3\epsilon^4}{L^2}B^4\norm{d_K-f(K)}_{X^\ell}+\frac{3\epsilon^4}{L^3}\norm{f}^5_{X^{0,2}}.
\end{split}
\end{align*}
Here in last step, we apply (\ref{eq:decay}).

\textbf{Estimate of Errors:}

\underline{Bound on $II$}  

From Lemma \ref{boundNH} and Lemma \ref{boundNG},
\begin{align*}
\begin{split}
\norm{II}_{X^\ell}\leq \sum_{d=3}^P \frac{\epsilon^{2d}}{L^{2d}} L^{2(d-1)+}\norm{u}_{X^\ell}^{2d+1}=\sum_{d=3}^P \frac{\epsilon^{2d}}{L^{2-}} B^{2d+1}.
\end{split}
\end{align*}

\underline{Bound on $III$} 

From Lemma \ref{boundNS},
\begin{align*}
\begin{split}
\norm{III}_{X^\ell}\leq \sum_{d=3}^P \frac{\epsilon^{2d}}{L^{2d}} L^{2(d-1)+}\norm{u}_{X^\ell}^{2d+1}=\sum_{d=3}^P \frac{\epsilon^{2d}}{L^{2-}} B^{2d+1}.
\end{split}
\end{align*}

\underline{Bound on $IV$} 

From Lemma \ref{boundNtildeH} and Lemma \ref{boundNtildeG},
\begin{align*}
\begin{split}
\norm{IV}_{X^\ell}\leq  \frac{\epsilon^{2P+2}}{L^{2P+2}} L^{2P+2+}B^{2P+3}=\epsilon^{2P+2}L^+B^{2P+3}.
\end{split}
\end{align*}

Integrating (\ref{eq:wk}), we have
\begin{align*}
\begin{split}
&\left\| d_K(t) -f\left(\frac{t}{T_R},K\right)\right\|_{X^\ell} -C_{\gamma,B}\epsilon^{2}  L^{\gamma}\\
\le & \int_0^t \Bigg(\frac{3\epsilon^4}{L^2}B^4\left\| d_K(t) - f\left(\frac{t}{T_R},K\right)\right\|_{X^\ell}+
\frac{3\epsilon^4}{L^2}B^4\norm{d_K-f(K)}_{X^\ell}+C_{\gamma,B}\frac{3\epsilon^4}{L^3} \\
&+C_{\gamma,B}\frac{\epsilon^{6}L^+}{L^{2}}+ C_{\gamma,B}\frac{\epsilon^{6}L^+}{L^{2}} +C_{\gamma,B}\epsilon^{2P+2}L^+ \Bigg)ds
\end{split}
\end{align*}
From Gronwall's inequality, and $0\leq t\leq T_RM$, we obtain,

\begin{equation}
\begin{split}
    &\left\| d_K(t) -f\left(\frac{t}{T_R},K\right)\right\|_{X^\ell}
    \\
    \leq& C_{\gamma,B}\Big(\epsilon^{2}  L^{\gamma}+\frac{M}{L}+M\epsilon^{2}  L^{\gamma}+M\epsilon^{2P+2}L^{2+\gamma} \Big)e^{CB^4M}
    \\
    \lesssim& C_{\gamma}  \Big(\epsilon^2L^\gamma+L^{-1}\Big)
\end{split}
\end{equation}

Therefore, $\norm{d_K(t)}_{X^\ell}\leq 2B$ and this completes the proof.

\end{proof}

\subsection{A Refined Estimate of $|d_{K}|^2$}

In previous subsection, we have described the dynamics of $d_K$, when $t<T_R$. We know that $d_K$ is different from $u_K$ by a phase factor that depends on the module of the Fourier coefficients $|d_K|$. From Proposition \ref{boundphase}, we have $\norm{|d_K|-|f(K)|}_{X^\ell}\lesssim \epsilon^2L^++L^{-1}$. Given this estimate we know that 
$$\frac{\epsilon^2}{2\pi L^2}\left|\int_0^t |d_K|^2(s)-|f(K)|^2(s) ds\right|\lesssim \frac{\epsilon^2}{2\pi L^2}t(\epsilon^2L^++L^{-1}).$$ 
When $t\sim \frac{L^2}{\epsilon^4}$, this roughly gives a bound $L^++L^{-1}\epsilon^{-2}$. This bound do not allow us to replace $e\left(\frac{\epsilon^2}{2\pi L^2}\int_0^t |d_K|^2(s)ds\right)$ by $e\left(\frac{\epsilon^2}{2\pi L^2}\int_0^t |f(K)|^2(s)ds\right)$. Only a refined estimate like $\norm{|d_K|^2-(\textit{known})}_{X^\ell}\le \epsilon^4L^++\frac{\epsilon^2L^+}{L}$ allows us to estimate $u_K$ in terms of $d_K$, which still requires some nontrivial work. 

In this subsection, we prove the following proposition.
\begin{proposition}\label{prop2}
We have the following estimate for $|c_K|^2$ and $|d_K|^2$
\begin{equation}
\norm{|c_K|^2(t) -  P_{K}(f)(t)}_{X^\ell}\lesssim \epsilon^4L^++\frac{\epsilon^2L^{+}}{L}.
\end{equation}
with
\begin{equation}
\begin{split}
P_K(f)=&- \frac{2\epsilon^6}{L^6}\int_{0}^t\Im\Big(\left( H^3_K(f)  +  G^3_K(f)+S^3_K(f)\right)\overline{f(K)}+H_K^2(f)F_K^1(\overline{f}) \\
+& H_K^2(f)F_K^1(\overline{f})  +  H_K^2(f)E_K^1(\overline{f})\Big)(s) ds
\end{split}
\end{equation}
and

\begin{equation}
\norm{|d_K|^2(t) -  Q_{K}(f)(t)}_{X^\ell}\lesssim \epsilon^4L^++\frac{\epsilon^2L^{+}}{L}.
\end{equation}
with
\begin{equation}
\begin{split}
Q_K(f)=&\frac{2\epsilon^{2}}{L^{2}}\Re\left( F^{d}_{K}(f)\overline{f(K)}\right)- \frac{2\epsilon^6}{L^6}\int_{0}^t\Im\Big(\left( H^3_K(f)  + G^3_K(f)+S^3_K(f)\right)\overline{f(K)}
\\
+&H_K^2(f)F_K^1(\overline{f}) + H_K^2(f)F_K^1(\overline{f})  +  H_K^2(f)E_K^1(\overline{f})\Big)(s) ds
\end{split}
\end{equation}

\end{proposition}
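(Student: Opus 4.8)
The plan is to derive a closed evolution equation for $|c_K|^2$, exploit the cancellation coming from the integrable structure to annihilate the largest term, and then integrate in time, finally transferring the result from $|c_K|^2$ to $|d_K|^2$ through the explicit relation $c_K = d_K - \sum_{d=1}^{P}\frac{\epsilon^{2d}}{L^{2d}}\big(F_K^d(u)+E_K^d(u)\big)$. Writing $N_K$ for the right-hand side of (\ref{eq:transformedmain}), so that $-i\partial_t c_K = N_K$, I first compute $\partial_t|c_K|^2 = 2\Re(\overline{c_K}\,\partial_t c_K) = -2\Im(\overline{c_K}N_K)$. The decisive point is the identity (\ref{eq:mainterm}), itself a consequence of the vanishing property in Theorem \ref{MainTheorem1}, which shows that the main term $\frac{\epsilon^4}{L^4}H_K^2(u)=\frac{3\epsilon^4}{L^4}\sum_{K_1\ne K}\frac{|d_{K_1}|^4}{(K_1-K)^2}d_K$ is a \emph{real} multiple of $d_K$. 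Splitting $\overline{c_K}=\overline{d_K}+(\overline{c_K}-\overline{d_K})$, the contribution $\Im\big(\overline{d_K}\cdot\frac{\epsilon^4}{L^4}H_K^2(u)\big)$ vanishes identically. This is what kills the term of size $\epsilon^4/L^2$, which would otherwise integrate to $O(1)$ over $[0,MT_R]$, leaving only contributions of size $\epsilon^6/L^6$ and smaller.

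Next I carry out the size bookkeeping. The surviving pieces of $-2\Im(\overline{c_K}N_K)$ at the threshold scale $\epsilon^6/L^6$ are: first, the cross terms $-\frac{\epsilon^6}{L^6}H_K^2(u)\,\overline{F_K^1(u)}$ obtained by pairing the main term with the leading correction $\overline{c_K}-\overline{d_K}=-\frac{\epsilon^2}{L^2}\overline{F_K^1(u)}-\cdots$ (the companion $E$-term is absent at $d=1$); and second, the $d=3$ parts $\frac{\epsilon^6}{L^6}(H_K^3(u)+G_K^3(u)+S_K^3(u))\overline{d_K}$ of the error groups $II$ and $III$. Using the explicit formula (\ref{eq:mainterm}), which gives $\|H_K^2(u)\|_{X^\ell}\lesssim L^{2+}$, together with Lemma \ref{boundNF} for $\|F_K^1(u)\|_{X^\ell}\lesssim L^{2+}$ and Lemmas \ref{boundNH}, \ref{boundNG}, \ref{boundNS} for the $d=3$ pieces, each such term has $X^\ell$-norm $\lesssim \epsilon^6 L^{-2+}$, hence integrates over $[0,MT_R]$ to size $\epsilon^2 L^+$. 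Every remaining term is genuinely smaller: the $d\ge4$ parts of $II$ and $III$ give $\epsilon^{2d}L^{-2+}$, hence $\le\epsilon^4 L^+$ after integration; pairing the main term with higher corrections $F_K^d,E_K^d$ ($d\ge2$) gives $\epsilon^{2d+4}L^{-2+}\le\epsilon^8L^{-2+}$; and $IV$ contributes $\epsilon^{2P+2}L^+$, whose integrated size $\epsilon^{2P-2}L^{2+}$ is negligible for $P$ chosen large enough depending on $\gamma$, using $\epsilon^2 L^\gamma\ll1$. Thus, modulo errors of size $\epsilon^4 L^+$, only the $\epsilon^6/L^6$ terms survive.

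I then integrate $\partial_t|c_K|^2$ over $[0,t]$. Since $|f(\tfrac{t}{T_R},K)|=|f_0(K)|$ is conserved along (\ref{eq:cr}), the time-independent leading part $|f_0(K)|^2$ is carried along unchanged while the entire time variation is supplied by the $\epsilon^6/L^6$ integrands isolated above; replacing each $d_K$ in these integrands by $f(\tfrac{t}{T_R},K)$ via Proposition \ref{boundphase}, where $\|d_K-f\|_{X^\ell}\lesssim\epsilon^2 L^++L^{-1}$, costs for each integrand of pre-integration size $\epsilon^6L^{-2+}$ an additional error $\epsilon^6L^{-2+}(\epsilon^2L^++L^{-1})$, integrating to exactly $\epsilon^4L^++\frac{\epsilon^2L^+}{L}$. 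The replaced integrals are precisely the correction in $P_K(f)$, yielding $\||c_K|^2-P_K(f)\|_{X^\ell}\lesssim\epsilon^4L^++\frac{\epsilon^2L^+}{L}$. Finally, for $|d_K|^2$ I expand $|d_K|^2=|c_K|^2+2\Re\big(\overline{c_K}(d_K-c_K)\big)+|d_K-c_K|^2$ with $d_K-c_K=\sum_{d=1}^P\frac{\epsilon^{2d}}{L^{2d}}(F_K^d(u)+E_K^d(u))$; the leading term $\frac{2\epsilon^2}{L^2}\Re(\overline{c_K}F_K^1(u))$, upon replacement by $\frac{2\epsilon^2}{L^2}\Re(F_K^1(f)\overline{f})$, produces the extra contribution in $Q_K(f)$, while $|d_K-c_K|^2$ and the higher $F_K^d,E_K^d$ ($d\ge2$) are $\le\epsilon^4L^+$ by Lemmas \ref{boundNF} and \ref{boundNE} and so are absorbed into the error.

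The main obstacle is not any single estimate but the cancellation and its bookkeeping: one must recognize that (\ref{eq:mainterm}) forces the leading $\epsilon^4/L^2$ term to be a real multiple of $d_K$, so that it disappears under $\Im(\overline{d_K}\cdot)$, and then track, with the sharp powers of $L$ from Lemmas \ref{boundNH}--\ref{boundNS}, exactly which of the many remaining terms reach the $\epsilon^6/L^6$ threshold retained in $P_K(f)$ and $Q_K(f)$ versus which fall below the $\epsilon^4L^++\frac{\epsilon^2L^+}{L}$ budget. A secondary delicate point is that the cost of replacing $d_K$ by $f$ through Proposition \ref{boundphase} is, after integration up to the wave-turbulence time $T_R$, of exactly the claimed error size, so the argument has no slack and the precise exponents of $L$ and $\epsilon$ must be respected throughout.
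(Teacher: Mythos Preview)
Your proposal is correct and follows essentially the same route as the paper's own proof: compute $\partial_t|c_K|^2=-2\Im(\overline{c_K}N_K)$, expand $\overline{c_K}=\overline{d_K}-\sum\frac{\epsilon^{2d}}{L^{2d}}(\overline{F_K^d}+\overline{E_K^d})$, use the identity (\ref{eq:mainterm}) to see that $\frac{\epsilon^4}{L^4}H_K^2(u)\overline{d_K}$ is real so its imaginary part vanishes (the paper calls this $\mathcal{N}_1=0$), isolate the surviving $\epsilon^6/L^6$ contributions, replace $d_K$ by $f$ via Proposition~\ref{boundphase} (the paper packages this replacement step as a separate multilinear-form lemma, Lemma~\ref{l:multilinearform}, but the content is identical to what you describe), integrate over $[0,MT_R]$, and finally pass from $|c_K|^2$ to $|d_K|^2$ through the explicit difference $d_K-c_K$. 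Your observation that the $E_K^1$ companion is absent because $G^1=0$ is in fact sharper than the paper's presentation, which formally keeps that term even though it vanishes.
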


\begin{proof}
Our final goal is to to estimate $|d_K(t)|^2$ to obtain the dynamics of $u_K$. Our strategy here is to first estimate $|c_K(t)|^2$  using the equation for $c_K$. Then estimate the difference between $|c_K(t)|^2$ and $|d_K(t)|^2$. For $c_K(t)$, recall we have
\begin{equation*}
    -i\partial_tc_K=\frac{\epsilon^{4}}{L^{4}}H_K^2(u)+\sum_{d=3}^P\frac{\epsilon^{2d}}{L^{2d}}(H^{d}_{K}(u)+G^{d}_{K}(u))+\sum_{d=3}^{P+1} \frac{\epsilon^{2d}}{L^{2d}} S^{d}_{K}(u)+\frac{\epsilon^{2P+2}}{L^{2P+2}}(\widetilde{H}^{P+1}_{K}(u)+\widetilde{G}^{P+1}_{K}(u)).
\end{equation*}
Therefore,
\begin{align*}
    \partial_t|c_K|^2&=2\Re(\partial_tc_K \overline{c}_K)\\
    &=-2\Im\Big(\frac{\epsilon^{4}}{L^{4}}H_K^2(u)\overline{c}_K+\sum_{d=3}^P\frac{\epsilon^{2d}}{L^{2d}}(H^{d}_{K}(u)+G^{d}_{K}(u))\overline{c}_K\\
    &+\sum_{d=3}^{P+1} \frac{\epsilon^{2d}}{L^{2d}} S^{d}_{K}(u)\overline{c}_K+\frac{\epsilon^{2P+2}}{L^{2P+2}}(\widetilde{H}^{P+1}_{K}(u)+\widetilde{G}^{P+1}_{K}(u))\overline{c}_K\Big).
\end{align*}
Since
$$ \overline{c}_K=\overline{d}_K-\sum_{d=1}^{P}\frac{\epsilon^{2d}}{L^{2d}}F^{d}_{K}(\overline{u})-\sum_{d=1}^{P}\frac{\epsilon^{2d}}{L^{2d}}E^{d}_{K}(\bar{u}),$$
we have 
\begin{align}\label{eq:cK2}
\begin{split}
    \partial_t|c_K|^2&=-2\Im\Bigg(\frac{\epsilon^{4}}{L^{4}}H_K^2(u)\Big(\overline{d}_K-\sum_{d=1}^{P}\frac{\epsilon^{2d}}{L^{2d}}F^{d}_{K}(\overline{u})-\sum_{d=1}^{P}\frac{\epsilon^{2d}}{L^{2d}}E^{d}_{K}(\bar{u})\Big)\\
    &+\sum_{d=3}^P\frac{\epsilon^{2d}}{L^{2d}}(H^{d}_{K}(u)+G^{d}_{K}(u))\Big(\overline{d}_K-\sum_{d=1}^{P}\frac{\epsilon^{2d}}{L^{2d}}F^{d}_{K}(\overline{u})-\sum_{d=1}^{P}\frac{\epsilon^{2d}}{L^{2d}}E^{d}_{K}(\bar{u})\Big)\\
    &+\sum_{d=3}^{P+1} \frac{\epsilon^{2d}}{L^{2d}} S^{d}_{K}(u)\Big(\overline{d}_K-\sum_{d=1}^{P}\frac{\epsilon^{2d}}{L^{2d}}F^{d}_{K}(\overline{u})-\sum_{d=1}^{P}\frac{\epsilon^{2d}}{L^{2d}}E^{d}_{K}(\bar{u})\Big)\\
    &+\frac{\epsilon^{2P+2}}{L^{2P+2}}(\widetilde{H}^{P+1}_{K}(u)+\widetilde{G}^{P+1}_{K}(u))\Big(\overline{d}_K-\sum_{d=1}^{P}\frac{\epsilon^{2d}}{L^{2d}}F^{d}_{K}(\overline{u})-\sum_{d=1}^{P}\frac{\epsilon^{2d}}{L^{2d}}E^{d}_{K}(\bar{u})\Big)\Bigg)\\
    &=-2\Im\left(\frac{\epsilon^{4}}{L^{4}}H_K^2(u)\overline{d}_K\right)    +2\Im \left(\frac{\epsilon^6}{L^6}\big(H^3_K(u)+G^3_K(u)+S^3_K(u)\big)\overline{d}_K\right)\\
    &+2\Im\left(\frac{\epsilon^{6}}{L^{6}}H_K^2(u)F_K^1(\overline{u})\right)     +2\Im\left(\frac{\epsilon^{6}}{L^{6}}H_K^2(u)E_K^1(\overline{u})   \right)  +\text{Higher Order Terms}\\
&=:\mathcal{N}_1+\mathcal{N}_2+\mathcal{N}_3+\mathcal{N}_4+\mathcal{N}_5.
\end{split}
\end{align}

Here we collect all terms with the factor of order lower than $\frac{\epsilon^6}{L^6}$ on the right hand side and denote them by $\mathcal{N}_1$, $\mathcal{N}_2$, $\mathcal{N}_3$, $\mathcal{N}_4$. The collection of all other terms is called higher order terms and is denoted by $\mathcal{N}_5$. Note that our final goal is $\norm{|c_K|^2-(\textit{known})}_{X^\ell}\lesssim \epsilon^4L^++\frac{\epsilon^2L^+}{L}$ for $t<\frac{L^2}{\epsilon^4}$. All terms of order greater than $\frac{\epsilon^8}{L^8}$ still have $\epsilon^4$ in front after integration over $t$ and thus can be treated as error. So we collect  all terms of order lower than $\frac{\epsilon^6}{L^6}$ for which further estimate should be done.

Notice that $\mathcal{N}_1=0$. This is because by (\ref{eq:mainterm}), we know that 
\begin{align}
\frac{\epsilon^{4}}{L^{4}}H_K^2(u)=\frac{3\epsilon^4}{L^4}\sum_{\substack{K_1\ne K}} \frac{|d_{K_1}|^4}{(K_1-K)^2}d_K.
\end{align}
Thus 
\begin{align*}
-2\Im\frac{\epsilon^{4}}{L^{4}}H_K^2(u)\overline{d}_K=-2\Im\frac{3\epsilon^4}{L^4}\sum_{\substack{K_1\ne K}} \frac{|d_{K_1}|^4}{(K_1-K)^2}|d_K|^2=0
\end{align*}
From Proposition \ref{boundphase}, we have
$$\sup_{0\leq t\leq MT_R}\Big||d_K(t)|-|f(\frac{t}{T_R},K)|\Big|\lesssim \epsilon^2L^\gamma+\frac{1}{L}.$$
To treat all other $\mathcal{N}_{j}$, $j=2,3,4$., we first introduce the following lemma.

\begin{lemma}\label{l:multilinearform}
For any multilinear form $F(x_1,\cdots,x_m):$ $X^\ell\times\cdots\times X^\ell\rightarrow X^\ell$ with norm $\norm{F}$, i.e.  $$\norm{F(x_1,\cdots,x_m)}_{X^\ell}\le \norm{F}\ \norm{x_1}_{X^\ell}\cdots\norm{x_m}_{X^\ell}.$$

If $\norm{d_{K_j}}_{X^\ell}\le B$ and $\norm{f(K)}_{X^\ell}\le B$, we have
\begin{equation}
\begin{split}
&\norm{F(d_{K_1},\cdots,d_{K_m})-F(f(K_1),\cdots,f(K_m))}_{X^\ell}\\
&\leq CB^{m-1}\norm{F}\norm{d_K-f(K)}_{X^\ell}\leq CB^{m-1}\norm{F}(\epsilon^2L^++\frac{1}{L}).
\end{split}
\end{equation}
\end{lemma}
\begin{remark}
By the polar identity, 
\begin{equation}
\begin{split}
&\norm{F(x_1,\cdots,x_m)}_{X^\ell}\le \norm{F}\ \norm{x_1}_{X^\ell}\cdots\norm{x_m}_{X^\ell},\ \forall x_1,\cdots, x_m.\\
&\Leftrightarrow \norm{F(x,\cdots,x)}_{X^\ell}\le \norm{F}\ \norm{x}_{X^\ell}\cdots\norm{x}_{X^\ell}, \ \forall x.
\end{split}
\end{equation}
Thus the results in Lemma \ref{boundNH}, \ref{boundNF}, \ref{boundNtildeH}, \ref{boundNG}, \ref{boundNE}, \ref{boundNtildeG}, \ref{boundNS} can be viewed as bounds on the norm of the multilinear forms $H^d_K(u)$, $F^d_K(u)$, $\widetilde{H}^d_K(u)$, $G^d_K(u)$ and $\widetilde{G}^d_K(u)$ and $S^d_K(u)$.
\end{remark}
\begin{proof}
\begin{align*}
&\norm{F(d_{K_1},\cdots,d_{K_m})-F(f(K_1),\cdots,f(K_m))}_{X^\ell}
\\
=&\norm{\int_0^1 \frac{d}{dt}F(td_{K_1}+(1-t)f(K_1),\cdots,td_{K_m}+(1-t)f(K_m)) dt}_{X^\ell}
\\
=&\int_0^1 \norm{\frac{d}{dt}F(td_{K_1}+(1-t)f(K_1),\cdots,td_{K_m}+(1-t)f(K_m))}_{X^\ell} dt
\\
=&\int_0^1 \sum_{j=1}^m \norm{\partial_{x_j} F(td_{K_1}+(1-t)f(K_1),\cdots,td_{K_m}+(1-t)f(K_m))  (d_{K_j}-f(K_j)}_{X^\ell} dt
\\
\lesssim&C(\norm{d_K}_{X^\ell}^{m-1}+\norm{f(K)}_{X^\ell}^{m-1})\norm{F}\norm{d_K-f(K)}_{X^\ell}
\\
\lesssim&CB^{m-1}\norm{F}\norm{d_K-f(K)}_{X^\ell}
\end{align*}
\end{proof}

Basically this lemma tells us that we may replace $d_K$ by $f(K)$ up to errors. For sake of simplicity, we introduce $H^d_K(f)$, $G^d_K(f)$ and $S^d_K(f)$, which is obtained simply by replacing all $d_{K_j}$ in $H^d_K(u)$, $G^d_K(u)$ and $S^d_K(u)$ by $f(K)$.

\underline{Bound on $\mathcal{N}_2$:} 
By Lemma \ref{boundNH}, \ref{boundNG} and \ref{boundNS}, we know that the norms of $H^3_K(u)$, $G^3_K(u)$ and $S^3_K(u)$ can be bounded by $L^{4+}$. Thus replace $d_K$ by $f(K)$ using Lemma \ref{l:multilinearform} we get 
\begin{align*}
\norm{\mathcal{N}_2- 2\left(\Im \frac{\epsilon^6}{L^6}\big(H^3_K(f)+G^3_K(f)+S^3_K(f)\big)\right)\overline{f(K)}}_{X^\ell}   &\lesssim \frac{\epsilon^6}{ L^{6}}L^{4+}(\epsilon^2L^++\frac{1}{L})\\
&\lesssim \frac{\epsilon^4}{ L^{2}}(\epsilon^4L^++\frac{\epsilon^2L^{+}}{L}).
\end{align*}

\underline{Bound on $\mathcal{N}_3$:}  
By (\ref{eq:mainterm}), Lemma \ref{boundNF}, we know that the norms of $H^2_K(u)$, $F^1_K(u)$ can be bounded by $L^{2+}$. Thus replace $d_K$ by $f(K)$ using Lemma \ref{l:multilinearform} we get
\begin{align*}
&\norm{\mathcal{N}_4-2\Im\left(\frac{\epsilon^{6}}{L^{6}}H_K^2(f)F_K^1(\overline{f})\right)}_{X^\ell} 
\\
=&\norm{2\Im\left(\frac{\epsilon^{6}}{L^{6}}H_K^2(u)F_K^1(\overline{u})\right)  -  2\Im\left(\frac{\epsilon^{6}}{L^{6}}H_K^2(f)F_K^1(\overline{f})\right)}_{X^\ell} 
\\
=&\norm{2\Im\left(\frac{\epsilon^{6}}{L^{6}}(H_K^2(u)  -  H_K^2(f))F_K^1(\overline{u})\right)  -  2\Im\left(\frac{\epsilon^{6}}{L^{6}}H_K^2(f) (F_K^1(\overline{f}-F_K^1(\overline{u}))\right)}_{X^\ell} 
\\
\lesssim& \frac{\epsilon^6}{ L^{6}}L^{4+}(\epsilon^2L^{+}+\frac{1}{L})
\\
\lesssim& \frac{\epsilon^4}{ L^{2}}(\epsilon^4L^++\frac{\epsilon^2L^{+}}{L}).
\end{align*}

\underline{Bound on $\mathcal{N}_4$:}  
By (\ref{eq:mainterm}), Lemma \ref{boundNE}, we know that the norms of $H^2_K(u)$, $E^1_K(u)$ can be bounded by $L^{2+}$. Thus replace $d_K$ by $f(K)$ using Lemma \ref{l:multilinearform} we get
\begin{align*}
&\norm{\mathcal{N}_3-2\Im\left(\frac{\epsilon^{6}}{L^{6}}H_K^2(f)E_K^1(\overline{f})\right)}_{X^\ell} 
\\
=&\norm{2\Im\left(\frac{\epsilon^{6}}{L^{6}}H_K^2(u)E_K^1(\overline{u})\right)  -  2\Im\left(\frac{\epsilon^{6}}{L^{6}}H_K^2(f)E_K^1(\overline{f})\right)}_{X^\ell} 
\\
=&\norm{2\Im\left(\frac{\epsilon^{6}}{L^{6}}(H_K^2(u)  -  H_K^2(f))E_K^1(\overline{u})\right)  -  2\Im\left(\frac{\epsilon^{6}}{L^{6}}H_K^2(f) (E_K^1(\overline{f}-E_K^1(\overline{u}))\right)}_{X^\ell} 
\\
\lesssim& \frac{\epsilon^6}{ L^{6}}L^{4+}(\epsilon^2L^{+}+\frac{1}{L})
\\
\lesssim& \frac{\epsilon^4}{ L^{2}}(\epsilon^4L^++\frac{\epsilon^2L^{+}}{L}).
\end{align*}

\underline{Bound on $\mathcal{N}_5$:}
From Lemma \ref{boundNH}, Lemma \ref{boundNF}, Lemma \ref{boundNtildeH}, Lemma \ref{boundNG}, Lemma \ref{boundNE}, Lemma \ref{boundNtildeG}, Lemma \ref{boundNS}, it's not very difficult to prove that the $X^\ell$ norm of  the sum of all other terms should be of the order $\frac{\epsilon^4L^+}{L^2} (\epsilon^4L^{+})$.

From above calculations we know that
\begin{equation}
\begin{split}
\partial_t |c_K|^2=&- 2\left(\Im \frac{\epsilon^6}{L^6}\big(H^3_K(f)  +  G^3_K(f)+S^3_K(f)\big)\overline{f(K)}\right) -  2\Im\left(\frac{\epsilon^{6}}{L^{6}}H_K^2(f)F_K^1(\overline{f})\right)  
\\
-&  2\Im\left(\frac{\epsilon^{6}}{L^{6}}H_K^2(f)F_K^1(\overline{f})\right)  -  2\Im\left(\frac{\epsilon^{6}}{L^{6}}H_K^2(f)E_K^1(\overline{f})\right)+O(\frac{\epsilon^4}{ L^{2}}(\epsilon^4L^++\frac{\epsilon^2L^{+}}{L}))
\end{split}
\end{equation}

Integrate in $t$, and note that because $t\lesssim\frac{L^2}{\epsilon^4}$, $\int^t_0 O(\frac{\epsilon^4}{ L^{2}}(\epsilon^4L^++\frac{\epsilon^2L^{+}}{L}))ds\le O(\epsilon^4L^++\frac{\epsilon^2L^{+}}{L})$. We have

\begin{equation}
\begin{split}
|c_K|^2(t)=&- \frac{2\epsilon^6}{L^6}\int_{0}^t\Im\Big(\left( H^3_K(f)  +  G^3_K(f)+S^3_K(f)\right)\overline{f(K)}+H_K^2(f)F_K^1(\overline{f}) 
\\
+& H_K^2(f)F_K^1(\overline{f})  +  H_K^2(f)E_K^1(\overline{f})\Big)(s) ds+O(\epsilon^4L^++\frac{\epsilon^2L^{+}}{L})
\end{split}
\end{equation}

Define 
\begin{equation}
\begin{split}
P_K(f)=&- \frac{2\epsilon^6}{L^6}\int_{0}^t\Im\Big(\left( H^3_K(f)  +  G^3_K(f)+S^3_K(f)\right)\overline{f(K)}+H_K^2(f)F_K^1(\overline{f}) \\
+& H_K^2(f)F_K^1(\overline{f})  +  H_K^2(f)E_K^1(\overline{f})\Big)(s) ds
\end{split}
\end{equation}

Then we know that 

\begin{equation}
\norm{|c_K|^2(t) -  P_{K}(f)(t)}_{X^\ell}\lesssim \epsilon^4L^++\frac{\epsilon^2L^{+}}{L}.
\end{equation}
We complete the prove of the first conclusion.

Now let us started to prove the second conclusion. Let us consider the difference between $|c_K|^2$ and $|d_K|^2$. By (\ref{eq:recurrencec^d_K}), we have 

\begin{equation}
    c_K=d_K-\sum_{d=1}^{P}\frac{\epsilon^{2d}}{L^{2d}}F^{d}_{K}(u)-\sum_{d=1}^{P}\frac{\epsilon^{2d}}{L^{2d}}E^{d}_{K}(u),
\end{equation}

So 
\begin{equation}
\begin{split}
&|c_K|^2-|d_K|^2
\\
=&-2\sum_{d=1}^P\frac{\epsilon^{2d}}{L^{2d}}\Re\left(F^{d}_{K}(u)+E^{d}_{K}(u)\right)\overline{d_K}+\left|\sum_{d=1}^{P}\frac{\epsilon^{2d}}{L^{2d}}(F^{d}_{K}(u)+F^{d}_{K}(u))\right|^2
\\
=&-\frac{2\epsilon^{2}}{L^{2}}\Re\left(F^{d}_{K}(u)+E^{d}_{K}(u)\right)\overline{d_K}+\text{Higher Order Term}
\\
=&-\frac{2\epsilon^{2}}{L^{2}}\Re\left( F^{d}_{K}(u)\overline{d_K}\right)+\text{Higher Order Term}
\end{split}
\end{equation}
Here the last step follows from the fact that $G^1_{K_1K_2K_3}=0$. Here we collect all terms with the factor of order lower than $\frac{\epsilon^6}{L^6}$ on the right hand side. The collection of all other terms is called higher order terms. Note that our final goal is $\norm{|d_K|^2-(\textit{known})}_{X^\ell}\le \epsilon^4L^++\frac{\epsilon^2L^+}{L}$ for $t<\frac{L^2}{\epsilon^4}$. All terms of order greater than $\frac{\epsilon^4}{L^4}$ have $\epsilon^4$ in front and thus can be treated as error. So we collect  all terms of order lower than $\frac{\epsilon^2}{L^2}$ for which further estimate should be done.

The first term can be treated in the same way as that of $\mathcal{N}_2$. We first notice that the by Lemma \ref{boundNF}, we know that the norms of $F^1_K(u)$ can be bounded by $L^{2+}$. Thus replace $d_K$ by $f(K)$ using Lemma \ref{l:multilinearform} we get 
\begin{align*}
&\norm{\text{first term}  +\frac{2\epsilon^{2}}{L^{2}}\Re\left( F^{d}_{K}(f)\overline{f(K)}\right)}_{X^\ell} 
\\  
=&\frac{2\epsilon^{2}}{L^{2}}\norm{\Re\left( F^{d}_{K}(u)\overline{d_K}\right)-\Re\left( F^{d}_{K}(f)\overline{f(K)}\right)}_{X^\ell} 
\\
\lesssim& \frac{\epsilon^2}{ L^{2}}L^{2+}(\epsilon^2L^++\frac{1}{L})
\\
\lesssim& \epsilon^4L^++\frac{\epsilon^2L^{+}}{L}.
\end{align*}

From Lemma \ref{boundNH}, Lemma \ref{boundNF}, Lemma \ref{boundNtildeH}, Lemma \ref{boundNG}, Lemma \ref{boundNE}, Lemma \ref{boundNtildeG}, Lemma \ref{boundNS}, it's easy to verify that the $X^\ell$ norm of the sum of all other terms should be bounded by $\epsilon^4L^+$.

Thus 
\begin{align*}
&\norm{|d_K|^2  - |c_K|^2 -\frac{2\epsilon^{2}}{L^{2}}\Re\left( F^{d}_{K}(f)\overline{f(K)}\right)}_{X^\ell}
\\
\lesssim& \epsilon^4L^++\frac{\epsilon^2L^{+}}{L}.
\end{align*}
Now apply the first conclusion and this completes the proof of the second conclusion.

\end{proof}

\subsection{Wrapping up} In this section, we shall prove Theorem \ref{th:main} which states that

\begin{equation}
\begin{split}
    &\norm{u_{K}(t)-f_0(K)e\left(\Big(K^2+\frac{\epsilon^2}{\pi L}\norm{u}_{L^2}^2+\frac{\epsilon^2}{\pi L^2}|f_0(K)|^4\Big)t-\frac{\epsilon^2}{2\pi L^2}\int^t_{0}Q_{K}(f)(s) ds\right)}_{X^\ell(\mathbb{Z}_{L}^n)}
    \\
    &\lesssim C_{\gamma}(\epsilon^2 L^\gamma+\frac{1}{L^{1-\gamma}}).
\end{split}
\end{equation}

From Proposition \ref{boundphase}, we know that in $[0,MT_R]$,
\begin{equation}
\begin{split}
    &\norm{u_{K}(t)-f_0(K)e\left(\Big(K^2+\frac{\epsilon^2}{\pi L}\norm{u}_{L^2}^2+\frac{\epsilon^2}{\pi L^2}|f_0(K)|^4\Big)t-\frac{\epsilon^2}{2\pi L^2}\int^t_{0}|d_{K}(s)|^2 ds\right)}_{X^\ell(\mathbb{Z}_{L}^n)}
    \\
    &\lesssim C_{\gamma}(\epsilon^2 L^\gamma+\frac{1}{L^{1-\gamma}})
\end{split}
\end{equation}
and
\begin{equation}
\norm{u_{K}(t)}_{X^\ell(\mathbb{Z}_{L}^n)}\le B.
\end{equation}

Thus by Proposition \ref{prop2}, we know that 
\begin{align*}
    &\norm{u_{K}(t)-f_0(K)e\left(\Big(K^2+\frac{\epsilon^2}{\pi L}\norm{u}_{L^2}^2+\frac{\epsilon^2}{\pi L^2}|f_0(K)|^4\Big)t-\frac{\epsilon^2}{2\pi L^2}\int^t_{0}Q_{K}(f)(s) ds\right)}_{X^\ell(\mathbb{Z}_{L}^n)}
    \\
    \le& \norm{f_0(K)e\left((\cdots)t-\frac{\epsilon^2}{2\pi L^2}\int^t_{0}|d_{K}(s)|^2 ds\right)-f_0(K)e\left((\cdots)t-\frac{\epsilon^2}{2\pi L^2}\int^t_{0}Q_{K}(f)(s) ds\right)}_{X^\ell(\mathbb{Z}_{L}^n)}\\
    +&\norm{u_{K}(t)-f_0(K)e\left((\cdots)t-\frac{\epsilon^2}{2\pi L^2}\int^t_{0}|d_{K}(s)|^2 ds\right)}_{X^\ell(\mathbb{Z}_{L}^n)}\\
    \lesssim&\norm{f_0(K)\left(  e\left(\frac{\epsilon^2}{2\pi L^2}\int^t_{0}(Q_{K}(f)(s)-|d_{K}(s)|^2) ds\right) -1\right)}_{X^\ell(\mathbb{Z}_{L}^n)}+\epsilon^2 L^++L^{-1+}
    \\
    \lesssim& \frac{\epsilon^2}{L^2}\int^t_{0}\norm{Q_{K}(f)(s)-|d_{K}(s)|^2}_{X^\ell(\mathbb{Z}_{L}^n)} ds+\epsilon^2 L^++L^{-1+}
    \\
    \lesssim&  \frac{\epsilon^2}{L^2}T_R(\epsilon^4L^++\frac{\epsilon^2L^{+}}{L}) +\epsilon^2 L^++L^{-1+}
    \\
    =&\epsilon^2 L^++L^{-1+}.
\end{align*}

The third step follows from $|e(x)-1|\le |x|$. This completes the proof of Theorem \ref{th:main}.


\begin{thebibliography}{10}

\bibitem{BIT}
A.~Babin, A.~A.~Ilyin, and E.~S.~Titi.
\newblock On the regularization mechanism for the periodic Korteweg-de Vries equation
\newblock {\em Comm. Pure Appl. Math.}, 64 (2011), no. 5, 591648.



\bibitem{Bourgain1993}
J.~Bourgain.
\newblock Fourier transform restriction phenomena for certain lattice subsets
  and applications to nonlinear evolution equations. {{I}}. {{Schr\"odinger}}
  equations.
\newblock {\em Geometric and Functional Analysis}, 3(2):107--156, 1993.

\bibitem{Bourgain1994}
J.~Bourgain. 
\newblock Periodic nonlinear Schr\"odinger equation and invariant measures, 
\newblock{\em Comm. Math. Phys.} 166 (1994), 1-26.

\bibitem{Bourgain1994'}
J.~Bourgain.
\newblock Approximation of Solutions of the Cubic Nonlinear Schrodinger Equations by Finite-Dimensional
Equations and Nonsqueezing Properties.
\newblock {\em Int. Math. Res. Not. IMRN}, 1994, no. 2, 79--90.


\bibitem{Bourgain2004}
J.~Bourgain.
\newblock A remark on normal forms and the “I-method” for periodic NLS.
\newblock {\em J. Anal. Math.} (2004) 94: 125.


\bibitem{BR}
J.~Br\"{u}dern, O.~Robert.
\newblock Rational points on linear slices of diagonal hypersurfaces.
\newblock {\em Nagoya Math. J.}, Volume 218, 51--100, 2015.

\bibitem{BGHS1}
T.~Buckmaster, P.~Germain, Z.~Hani, and J.~Shatah.
\newblock Effective dynamics of the nonlinear {S}chr\"{o}dinger equation on
  large domains.
\newblock {\em Comm. Pure Appl. Math.}, 71(7):1407--1460, 2018.

\bibitem{BGHS2}
T.~Buckmaster, P.~Germain, Z.~Hani, and J.~Shatah.
\newblock Analysis of the {(CR)} equation in higher dimensions.
\newblock {\em International Mathematics Research Notices Accepted}, 2017.

\bibitem{BGHS3} 
T. Buckmaster, P. Germain, Z. Hani, and J. Shatah.
\newblock Onset of the wave turbulence description of the longtime behavior of the nonlinear Schr\"odinger equation. 
{\em arXiv:1907.03667}.

\bibitem{Christ}
M.~Christ.
\newblock Power series solution of a nonlinear {S}chr{\"o}dinger equation in {\em Mathematical Aspects of Nonlinear Dispersive Equations}.
\newblock 131–155, {\em Ann. of
Math. Stud.}, 163, Princeton Univ. Press, Princeton, NJ, 2007.

\bibitem{CKSTT'}
J. Colliander, M. Keel, G. Staffilani, H. Takaoka, T. Tao
\newblock Global well-posedness and scattering for the energy-critical nonlinear Schr\"{o}dinger equation in $\mathbb{R}^3$
\newblock {\em Ann. of Math.}, (2) 167 (2008), 767 – 865.




\bibitem{DZ}
P.~Deift, X.~Zhou.
\newblock Long‐time asymptotics for solutions of the NLS equation with initial data in a weighted Sobolev space.
\newblock {\em Comm. Pure Appl. Math.}, 56: 1029--1077. doi:10.1002/cpa.3034, 2003.


\bibitem{ET}
M.~B.~Erdo\v{g}an, N.~Tzirakis.
\newblock Talbot effect for the cubic nonlinear Schr{\"o}dinger equation on the torus.
\newblock {\em Math. Res. Lett.}, 20 (2013), 1081--1090.



\bibitem{FGH}
E.~Faou, P.~Germain, and H.~Hani.
\newblock The weakly nonlinear large-box limit of the {{2D}} cubic nonlinear
  {{Schr{\"o}dinger}} equation.
\newblock {\em Journal of the American Mathematical Society}, 2015.

\bibitem{GKO}
Z.~Guo, S.~Kwon, and T.~Oh.
\newblock Poincar\'e-Dulac normal form reduction for unconditional well-posedness of the periodic cubic NLS.
\newblock {\em Comm. Math. Phys.}, 322(1):19--48, 2013.

\bibitem{GMS}
P. ~Germain, N. ~Masmoudi, J. ~Shatah.
\newblock Global solutions for 3D quadratic Schr{\"o}dinger equations. 
\newblock {\em Int. Math. Res. Not. IMRN}, 2009, no. 3, 414–432.


\bibitem{Hani}
Z.~Hani.
\newblock Long-time instability and unbounded {S}obolev orbits for some
  periodic nonlinear {S}chr\"odinger equations.
\newblock {\em Arch. Ration. Mech. Anal.}, 211(3):929--964, 2014.



\bibitem{HB}
D.~R. Heath-Brown.
\newblock A new form of the circle method, and its application to quadratic
  forms.
\newblock {\em J. Reine Angew. Math.}, 481:149--206, 1996.



\bibitem{IoPa1}
A.~D. Ionescu and B.~Pausader.
\newblock Nonlinear fractional {S}chr{\"o}dinger equations in one dimension.
\newblock {\em J. Funct. Anal.}, 266, 139--176 (2014).


\bibitem{IoPa2}
A.~D.~Ionescu, F.~Pusateri.
\newblock Global regularity for 2D water waves with surface tension.
\newblock {\em Memoirs of the American Mathematical Society}, 256, Memo 1227 (2018).



\bibitem{KST}
T.~Kappeler, B.~Schaad, and P.~Topalov.
\newblock Scattering-like phenomena of the
periodic defocusing NLS equation.
\newblock {\em Math. Res. Lett.}, 24(3), 1081--1090, 2015.


\bibitem{KP}
J.~Kato, F.~Pusateri.
\newblock A new proof of long-range scattering for critical nonlinear {S}chr{\"o}dinger equations.
\newblock {\em Differ. Int. Equ.}, 24(9-10), 923--940 (2011).

\bibitem{LS}
J.~Lukkarinen and H.~Spohn.
\newblock Weakly nonlinear {S}chr\"odinger equation with random initial data.
\newblock {\em Invent. Math.}, 183(1):79--188, 2011.

\bibitem{Nazarenko}
S.~Nazarenko.
\newblock {\em Wave turbulence}, volume 825 of {\em Lecture Notes in Physics}.
\newblock Springer, Heidelberg, 2011.

\bibitem{NMPZ1984}
S.~Novikov, S.V.~Manakov, L.P.~Pitaevskii, V.E.~Zakharov 
\newblock{\em Theory of Solitons: The Inverse Scattering Method}, Monographs in Contemporary Mathematics, Springer-Verlag, 1984.


\bibitem{OW}
T.~Oh, Y.~Wang.
\newblock Normal form approach to the one-dimensional periodic cubic nonlinear Schr{\"o}dinger equation in almost critical Fourier-Lebesgue spaces.
To appear in  {\em J. Anal. Math.}

\bibitem{numbertheorybook}
M.~Overholt
\newblock {\em A Course in Analytic Number Theory}.
\newblock Graduate Studies in Mathematics, Vol. 160, American Mathematical Society, Providence, RI, 2014.

\bibitem{ProcesiProcesi}
C.~Procesi and M.~Procesi.
\newblock A kam algorithm for the nonlinear schr\"odinger equation.
\newblock {\em Advances in Math.}, 272:399--470., 2015.

\bibitem{Shatah1}
J.~Shatah. 
\newblock Normal forms and quadratic nonlinear Klein–Gordon equations. 
\newblock {\em Comm. Pure Appl. Math.}, 38, 685--696 (1985).


\bibitem{TaoBook}
T.~Tao.
\newblock {\em Nonlinear dispersive equations}, volume 106 of {\em CBMS
  Regional Conference Series in Mathematics}.
\newblock Published for the Conference Board of the Mathematical Sciences,
  Washington, DC; by the American Mathematical Society, Providence, RI, 2006.
\newblock Local and global analysis.


\bibitem{ZLF}
V.~E. Zakharov, V.~S. L'vov, and G.~Falkovich.
\newblock {\em Kolmogorov spectra of turbulence I: Wave turbulence}.
\newblock Springer Science \& Business Media, 2012.

\bibitem{ZOCO}
V. E. Zakharov, A. V. Odesskii, M. Cisternino, and M. Onorato
\newblock Five-Wave Classical Scattering Matrix and Integrable Equations
\newblock {\em Theoretical and Mathematical Physics}, 180(1): 759–764 (2014)


\bibitem{ZSh} 
V.~E. Zakharov, A.~B. Shabat.
\newblock Exact theory of two-dimensional self-focusing and one-dimensional self-modulation of waves in nonlinear media. 
\newblock {\em Sov. Phys. J. Exp. Theor. Phys.}, 34, 62--69 (1972).

\bibitem{ZS}
V.~E. Zakharov, E.~I. Schulman.
\newblock On Additional Motion Invariants of Classical Hamiltonian Wave Systems.
\newblock {\em Phys. D}, 29, 283--320 (1988).

\bibitem{ZSc}
V.~E. Zakharov, E.~I. Schulman
\newblock Integrability of nonlinear systems and perturbation theory.
\newblock {\em What Is Integrability?} Springer Series in Nonlinear Dynamics, Springer, 185--250, 1990.

\bibitem{fange1}
F.~Zheng.
\newblock Long-term regularity of the periodic Euler-Poisson system for electrons in 2D.
\newblock {\em Comm. Math. Phys.}, 366(3):1135--1172, 2019.

\bibitem{fange2}
F.~Zheng.
\newblock Long-term regularity of 3D gravity water waves.
\newblock {\em 	arXiv:1910.01912}.



\end{thebibliography}
\end{document}